\documentclass{amsart}

\usepackage{verbatim,amsmath,amssymb}

%\numberwithin{equation}{section} \numberwithin{thm}{section}

\theoremstyle{plain}   %% This is the default, anyway
\begingroup % Confine the \theorembodyfont command
%%% toto neumi mistni tex
%%% \theorembodyfont{\sl}

   % Numbered separately, as A, B, etc.
\newtheorem{theorem}{Theorem}[section]   % Numbered within each section

\newtheorem{corollary}[theorem]{Corollary}     % Numbered along with thm
\newtheorem{lemma}[theorem]{Lemma}         % Numbered along with thm
\newtheorem{proposition}[theorem]{Proposition}  % Numbered along with thm

\newtheorem{definition}[theorem]{Definition}
\newtheorem*{question}{Question}

\endgroup

%%% We need to do the following outside of any group,
%%% since it's not \global:
%\renewcommand{\thebigthm}{\Alph{bigtheorem}}  % Number as "Theorem A."

\theoremstyle{definition}

\theoremstyle{remark}
\newtheorem{remark}[theorem]{Remark}        % Numbered along with thm
\newtheorem{example}[theorem]{Example}        % Numbered along with thm

%\newcommand*\patchAmsMathEnvironmentForLineno[1]{%
 % \expandafter\let\csname old#1\expandafter\endcsname\csname #1\endcsname
 % \expandafter\let\csname oldend#1\expandafter\endcsname\csname end#1\endcsname
 % \renewenvironment{#1}%
 %    {\linenomath\csname old#1\endcsname}%
 %    {\csname oldend#1\endcsname\endlinenomath}}%
%\newcommand*\patchBothAmsMathEnvironmentsForLineno[1]{%
%  \patchAmsMathEnvironmentForLineno{#1}%
%  \patchAmsMathEnvironmentForLineno{#1*}}%
%\AtBeginDocument{%
%\patchBothAmsMathEnvironmentsForLineno{equation}%
%\patchBothAmsMathEnvironmentsForLineno{align}%
%\patchBothAmsMathEnvironmentsForLineno{flalign}%
%\patchBothAmsMathEnvironmentsForLineno{alignat}%
%\patchBothAmsMathEnvironmentsForLineno{gather}%
%\patchBothAmsMathEnvironmentsForLineno{multline}%
%}
%\newcommand{\linenopar}{\par}  %fix for bug in package lineno.sty

\newcommand{\is}{\operatorname{isol}}
\newcommand{\length}{\operatorname{length}}

\newcommand{\vf}{\varphi}
\newcommand{\spa}{\operatorname{span}}
\newcommand{\cal}{\mathcal}

\newcommand{\Dom}{\operatorname{Dom}}

\newcommand{\dist}{{\rm dist}\,}

\newcommand{\R}{{\mathbb R}}
\newcommand{\N}{{\mathbb N}}
\newcommand{\INt}{{\rm int}\,}
\newcommand{\ep}{\varepsilon}

\newcommand{\diam}{{\rm diam}\,}
\newcommand{\Nor}{{\rm Nor}\,}
\newcommand{\Tan}{{\rm Tan}\,}

\newcommand{\reach}{{\rm reach}\,}

\newcommand{\Unp}{\operatorname{Unp}}

\newcommand{\graph}{\operatorname{graph}}
\newcommand{\card}{\operatorname{card}}

\newcommand{\conv}{\operatorname{conv}}
\newcommand{\intr}{\operatorname{Int}}

\newcommand{\hypo}{\operatorname{hypo}}
\newcommand{\epi}{\operatorname{epi}}

\def\halfsq{\hbox{\kern1pt\vrule height 7pt\vrule width6pt height 0.4pt depth0pt\kern1pt}}
\def\ihalfsq{\hbox{\kern1pt \vrule width6pt height 0.4pt depth0pt
                   \vrule height 7pt \kern1pt}}
\begin{document}

\title{On the structure of sets with positive reach}
\author{Jan Rataj, Lud\v ek Zaj\'\i \v cek}
\address{Charles University\\
Faculty of Mathematics and Physics\\
Sokolovsk\'a 83\\
186 75 Praha 8\\
Czech Republic}

\email{rataj@karlin.mff.cuni.cz}
\email{zajicek@karlin.mff.cuni.cz}

\thanks{The authors were supported by the grant GA\v CR No.~P201/15-08218S}

\begin{abstract}
We give a complete characterization of compact sets with positive reach (=proximally $C^1$ sets) in the plane and of one-dimensional sets with positive reach in $\R^d$. 
 Further, we prove that 
 if  $\emptyset \neq A\subset \R^d$  is a  set of positive reach of topological dimension
 $0< k < d$, then $A$ has its ``$k$-dimensional regular part''  $\emptyset \neq R \subset A$ which  is a $k$-dimensional ``uniform'' $C^{1,1}$ manifold open in $A$  and
 $A\setminus R$  can be locally covered by finitely many  $(k-1)$-dimensional DC surfaces.
 We  also show that if $A \subset \R^d$ has positive reach, then
 $\partial A$ can be locally covered by finitely many semiconcave hypersurfaces.
\end{abstract}

%\keywords{...}
%\subjclass[2000]{...}
%\date{\today}
\maketitle

\section{Introduction}
Federer in his fundamental paper \cite{Fe59} unified the approaches of convex and differential geometry, introducing curvature measures for sets with positive reach and proving the kinematic formulas. Sets with positive reach were also studied under distinct names (e.g., ``proximally smooth
 sets'' or ``prox-regular sets'') in general Hilbert spaces, cf. \cite{CT10}.

Of course, since  sets with positive reach form an important class, there exists
 a number of interesting results on their structure.

First we mention several results on the structure of ``special'' sets with positive reach. As far as we know, the first interesting result in this direction is essentially contained
 in Rechetnyak's 1956 paper \cite{Re} published  before Federer's seminal work. This result which was proved independently, using modern terminology, in \cite{Fu85}, reads as follows:
\medskip

{\bf (A)}\ \ \ \ \ {\it If $f: \R^{d-1} \to \R$ is Lipschitz, then its (closed) subgraph
 has positive reach if and only if $f$ is semiconcave.}
\medskip

The following related result was stated (in other words) without a proof in \cite[Remark 4.20]{Fe59}:
\medskip
 
{\bf (B)}\ \ \ \ {\it If $A\subset \R^d$ is a Lipschitz manifold of dimension $0<k<d$ with positive reach, then $A$ is a $k$-dimensional $C^{1,1}$ manifold.}
\medskip

The claims of \cite[Remark 4.20]{Fe59} easily imply also a more general result.
\medskip

{\bf (C)}\ \ \ \ {\it If $A\subset \R^d$ is a topological manifold of dimension $0<k<d$ with positive reach, then $A$ is a $k$-dimensional $C^{1,1}$ manifold.}

\medskip  

A proof of (C) was given by A. Lytchak, see \cite[Proposition 1.4]{Ly05} (even in Riemannian manifolds). It is based on a 
 Federer's unproved claim (which is a consequence of (E) below) and on the theory of length spaces (namely
$CAT(\kappa)$ spaces). Proofs of (B) for $k=d-1$ are well-known (cf. \cite[p. 3]{Scho} or \cite{Da});
 we prove also the general case, see Remark \ref{plb} below. 

The following result on special sets with positive reach was proved in \cite{CF}:
\medskip

{\bf (D)}\ \ \ \ {\it If  $\emptyset \neq A\subset \R^2$  is a connected set with positive reach and empty interior, then $A$ is either a singleton or a 1-dimensional manifold
 (possibly with boundary) of class $C^{1,1}$.} 

\medskip

It is written in  \cite{CF} that the result (D) ``gives a  complete characterization
 of connected sets of  positive reach with empty interior in the plane'', however it
 is not true for unbounded sets, see Example \ref{smycka} below. In Section \ref{oned}  we generalize (D) giving
 a complete characterization of one-dimensional sets $A \subset \R^d$ with positive reach.

For general sets with positive reach, there exist several complete characterizations, e.g. Federer's
 characterization (Proposition \ref{fedtan} below) or Lytchak's characterization for compact sets
 (\cite[Theorem 1.3]{Ly05}). However, the structure of sets with positive reach can be rather complicated, and the above characterizations do not give a satisfactory answer to the question
 ``how their structure can be complicated''. Our main aim is to give some partial answers to this  (unprecise) question.

In $\R^2$, we give (see Section \ref{plane}) an almost satisfactory answer: we provide a simple complete characterization
 of the local structure of compact sets with positive reach. This is our first main result.

Our second main result on the structure of general sets $A \subset \R^d$ with positive reach
 is an improvement of a further result claimed by Federer in \cite[Remark 4.20]{Fe59}. This
 result works with sets
$$ A^{(k)}:= \{a\in A: \dim (\Nor (A,a)) \geq d-k\},$$
where $0\leq k \leq d$ and $\Nor (A,a)$ denotes the normal cone of $A$ at $a$:

\medskip                  

{\bf (E)}\ \ \ \ {\it If  $A\subset \R^d$  is a  set of positive reach of topological dimension
 $0< k \leq d$, then the set $A\setminus A^{(k-1)}$ 
 is open if $k=d$ and it is a $k$-dimensional $C^{1,1}$ manifold open in $A$ for $k<d$.}
\medskip

We will call $R:= A\setminus A^{(k-1)}$  ``the main regular part of $A$''. Federer proved
 that $A^{(k-1)}$ is  countably $(k-1)$-rectifiable; so
\begin{equation}\label{rozkl}
  A= R \cup A^{(k-1)}
\end{equation}
 is a (canonical) decomposition of $A$ to a regular (smooth) $k$-dimensional part $R$
 and  a remaining $(k-1)$-dimensional part $A^{(k-1)}$. 

We slightly improve (E) showing that $R$ is even a ``uniform $k$-dimensional $C^{1,1}$ manifold''
 (see Definition \ref{plochy} (e) and Theorem \ref{hlav}).

Further we show (Theorem \ref{hlav}) that
\medskip

{\bf (F)}\ \ \ \ {\it $A^{(k-1)}= A \setminus R$ can be locally covered by finitely many
  $(k-1)$-dimensional DC surfaces.}
	\medskip
	
	In particular, the set $A^{(k-1)}$ is not only $(k-1)$-dimensional, but it has even locally
	 finite $(k-1)$-dimensional Hausdorff measure.

	We obtain (F) as a consequence of the fact  that some singular sets of convex  functions can be covered by {\it finitely many} DC surfaces. The proofs of these results
 which  refine the arguments of \cite{zajcon} are contained in Section \ref{spcf}.
In fact, these results on singular sets were originally motivated and obtained during our research in progress with D. Pokorn\'y on WDC sets, which provide a natural generalization of sets with positive reach, see \cite{PR13}.
 By the same method 
we prove that a $k$-dimensional set of positive reach can be locally covered by finitely many  $k$-dimensional DC surfaces.

In case $d=k$ we  improve (F) showing that  the boundary
 $\partial A= A^{(d-1)} $ of a set of positive reach in $\R^d$ can be locally covered by finitely many semiconvex
 hypersurfaces. We prove that result (Theorem \ref{pokrhr}) directly, without using results on singularities of convex functions.

Using (E), we also observe (Corollary  \ref{huhl}) that each $A \subset \R^d$ with positive reach 
 has a ``smooth part'' (of non-constant dimension) which is open and dense in $A$.

\section{Preliminaries}
\subsection{Basic definitions}
The symbols $B(c,r)$ and $\overline B(c,r)$ denote open and closed ball of center $c$ and radius $r$,
 respectively. We also sometimes use notation $B^X(c,r)$ for the ball in the space $X$.
The closure of a set $A$ is denoted by $\overline A$ or $\operatorname{cl}(A)$ and the set of isolated points of $A$ by  $\operatorname{isol} A$.
 The symbol $[x,y]$ denotes the (closed) segment if $x,y\in\R^d$.
The symbol $\Pi_M$ stands for the metric projection, see \eqref{mproj}.
We consider only real Banach spaces; the norm is always denoted by $|\cdot|$. By $\spa\, M$ we denote the linear span of the set $M$ and
 by $S_X$  the unit sphere in $X$. If  $X= W \oplus V$, then $\pi_W$ is the projection on $W$
 along $V$. If $X$ is a Hilbert space and $V$ is not specified, we mean that $V= W^{\perp}$.
 If $x\in X$ and $x\in X^*$, we set  $\langle x, x^* \rangle := x^*(x)$. The scalar product
 of vectors $x$, $y$ is also denoted by $\langle x, y \rangle$. If $X$ is a Hilbert space, we
 identify by the standard way $X$ and $X^*$. The symbol $\mathcal H^k $ stands for the $k$-dimensional
 Hausdorff measure. For sets $A\subset \R^d$, we denote by $\dim (A)$ and $\dim_H (A)$
 the topological and Hausdorff dimensions, respectively. We use the notation $e_i$ for the $i$th canonical basis vector in $\R^d$, $i=1\ldots,d$.

A mapping is called $K$-Lipschitz if it is Lipschitz with a (not necessarily minimal) constant $K$. 
% A bijection $f$ is called bilipschitz ($K$-bilipschitz) if both $f$ and $f^{-1}$ are Lipschitz %($K$-Lipschitz).

If $f$ is a real function, we use the abbreviated notation $\{f\leq r\}$ for $\{x\in\Dom(f):\, f(x)\leq c\}$. The hypograph and epigraph of $f$ are defined as
\begin{eqnarray*}
\hypo f&:=&\{(x,t)\in\Dom(f)\times\R:\, f(x)\geq t\},\\
\epi f&:=&\{(x,t)\in\Dom(f)\times\R:\, f(x)\leq t\}.
\end{eqnarray*}
If $f$ is defined on an open subset of a normed linear space $X$, we use the notation
$f'_+(x,v)$ for the one sided directional derivative of $f$ at $x$ in direction $v$. If $f$ is, in addition, locally Lipschitz, the generalized directional derivative of $f$ at $x\in\Dom(f)$ is defined as
$$f^\circ(x,v):=\limsup_{y\to x,t\to 0_+}\frac{f(y+t)-f(y)}{t},\quad v\in X,$$
and the {\it Clarke's subgradient of $f$ at $x$} is
$$\partial f(x):=\{ u^*\in X^*:\, \langle u^*,v\rangle\leq f^\circ(x,v)\text{ for all }v\in X\}$$ 
(cf.\ \cite[Section~1.2]{Cl90}). 

A mapping $F$ between Banach spaces $X$, $Y$ is called to be $C^{1,1}$, if
 it is Fr\' echet differentiable and
 the derivative $ F': \Dom(F) \to  \mathcal L(X,Y)$ is Lipschitz.

\subsection{Semiconvex functions, DC functions (mappings) and corresponding surfaces}

 One of several natural equivalent definitions  (cf. \cite[Definition 1.1.1 and Proposition 1.1.3]{CS}) of semiconcavity  reads as follows. We formulate it in the generality we need.
 
 \begin{definition}\label{semieuk} \rm
 A real function $u$ on an open convex subset $C$ of a finite-dimen\-sio\-nal Hilbert space $X$  is called {\it semiconcave}  with a {\it  semiconcavity constant} $c\geq 0$ if $u$ is continuous on $C$ and
  the function $g(x) = u(x) - (c/2) |x|^2$ is  concave on $C$.
	
A real function 	$v$ on $C$ is called {\it semiconvex} (with a {\it  semiconvexity constant} $c\geq 0$) if $-v$ is semiconcave  (with a  semiconcavity constant $c$).
\end{definition}

If $u$, $C$ and $X$ are as 
in the above definition, then (see, e.g., \cite[Proposition 2.1.2 and Corollary 3.3.8]{CS})
\begin{equation}\label{sscjj}
\text{$u$ is $C^{1,1}$ if and only if $u$ is both semiconcave and semiconvex.}
\end{equation}

We will need the following extension result.

\begin{lemma}\label{extdz}
Let $X$ be a finite-dimensional Hilbert space and  $\emptyset \neq P \subset X$ be a bounded
 set, $K\geq 0$ and $c>0$. Let $\psi$ be a Lipschitz function on $P$ such that for each $p \in P$ there exists
 a functional $h_p \in X^*$ such that $|h_p| \leq K$ and
\begin{equation}\label{subgr}
 \psi(p+\Delta)-\psi(p) -  h_p (\Delta)  \leq  c |\Delta|^2\ \ \ \text{whenever}\ \ \ \Delta\neq 0\ \ \ \text{and}\ \ \ p+\Delta \in P. 
\end{equation}
Then there exists a Lipschitz functions $F$ on $X$ which is semiconcave 
%(with linear modulus)
 and $F\restriction_P = \psi$.
\end{lemma}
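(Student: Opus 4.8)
The plan is the standard recipe for producing semiconcave extensions: take an infimum over $P$ of paraboloid ``test functions'' built from the pairs $(\psi(p),h_p)$, $p\in P$, and then repair the (a priori only quadratic) growth at infinity by a truncation, which is harmless because $P$ is bounded.

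Put $M:=\sup_{p\in P}|p|<\infty$ (finite since $P$ is bounded). First I would consider
$$\tilde F(x):=\inf_{p\in P}\bigl[\psi(p)+h_p(x-p)+c|x-p|^2\bigr],\qquad x\in X.$$
Expanding $c|x-p|^2=c|x|^2-2c\langle p,x\rangle+c|p|^2$ exhibits $G(x):=\tilde F(x)-c|x|^2$ as an infimum of the affine functions $x\mapsto h_p(x)-2c\langle p,x\rangle+\psi(p)-h_p(p)+c|p|^2$, whose slopes have norm at most $K+2cM$. Using $|h_p|\le K$ and the fact that $\psi$, being Lipschitz on the bounded set $P$, is bounded there, one checks that this infimum is finite at every point; hence $G$ is a finite concave, $(K+2cM)$-Lipschitz function on $X$, and $\tilde F(x)=G(x)+c|x|^2$ is semiconcave with semiconcavity constant $2c$. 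Next I would verify $\tilde F\restriction_P=\psi$: taking $p=x$ in the infimum gives $\tilde F(x)\le\psi(x)$ for $x\in P$, while for arbitrary $p\in P$ the inequality $\psi(x)-\psi(p)-h_p(x-p)\le c|x-p|^2$ --- which is exactly what is needed for $\tilde F(x)\ge\psi(x)$ --- is precisely hypothesis \eqref{subgr} applied with $\Delta=x-p$ (it is trivial when $x=p$).

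Since $\tilde F(x)=G(x)+c|x|^2$ is not globally Lipschitz, the final step is to replace the quadratic term by a truncated one. Put $\theta(x):=\bigl(\min\{|x|,M\}\bigr)^2=\min\{|x|^2,M^2\}$ and define $F(x):=G(x)+c\theta(x)$. Then $\theta(x)=|x|^2$ on $\overline B(0,M)\supseteq P$, so $F\restriction_P=\tilde F\restriction_P=\psi$; the function $\theta$, being the composition of the $1$-Lipschitz map $x\mapsto\min\{|x|,M\}$ with $t\mapsto t^2$ (which is $2M$-Lipschitz on $[0,M]$), is $2M$-Lipschitz, so $F$ is $(K+4cM)$-Lipschitz on $X$; and $\theta=\min\{|\cdot|^2,M^2\}$ is the minimum of $x\mapsto|x|^2$ (semiconcave with constant $2$) and the constant function $M^2$ (concave), hence semiconcave with constant $2$, so $c\theta$ is semiconcave with constant $2c$ and $F=G+c\theta$ (a concave function plus one semiconcave with constant $2c$) is semiconcave with constant $2c$. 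Thus $F$ has all the required properties. Equivalently, one may check directly that $F(x)-c|x|^2=G(x)-c\max\{0,|x|^2-M^2\}$ is concave, being the sum of the concave $G$ and the concave $x\mapsto-c\max\{0,|x|^2-M^2\}$.

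I do not expect a genuine obstacle: the extension is explicit. The only points needing a little care are (i) checking that the defining infimum is finite rather than $-\infty$, which is where boundedness of $P$, together with $|h_p|\le K$ and the Lipschitz bound on $\psi$, enters, and (ii) the routine stability of semiconcavity under finite minima and under adding a concave function --- all of which reduce to the fact that subtracting a suitable multiple of $|x|^2$ leaves a concave function.
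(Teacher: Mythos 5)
Your proof is correct, but it takes a different route from the paper. The paper does not construct the extension at all: it simply invokes the general extension theorem for semiconvex functions with an arbitrary modulus (Proposition 5.12 of Duda--Zaj\'{\i}\v{c}ek \cite{DZ}), applied to $-\psi$ with modulus $\vf(t)=ct$, using the standard equivalence between semiconcavity with constant $c$ and semiconcavity with linear modulus. You instead give an explicit, self-contained construction: the infimal envelope $\tilde F(x)=\inf_{p\in P}\bigl[\psi(p)+h_p(x-p)+c|x-p|^2\bigr]$, whose shifted version $G=\tilde F-c|\cdot|^2$ is a finite infimum of affine functions with uniformly bounded slopes (here the boundedness of $P$, the bound $|h_p|\le K$ and the boundedness of the Lipschitz $\psi$ on $P$ are exactly what is needed), hence concave and Lipschitz; the hypothesis \eqref{subgr} gives $\tilde F=\psi$ on $P$; and the truncation $\theta=\min\{|\cdot|^2,M^2\}$ repairs global Lipschitzness without disturbing the values on $P\subset\overline B(0,M)$ or the semiconcavity (since $F-c|\cdot|^2=G-c\max\{0,|\cdot|^2-M^2\}$ is concave). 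All these steps check out, including the finiteness of the infimum and the stability of semiconcavity under finite minima and addition of concave functions. What your argument buys is independence from the external reference and explicit constants (Lipschitz constant $K+4cM$, semiconcavity constant $2c$); what the paper's citation buys is brevity and the fact that the quoted result covers the more general setting of arbitrary moduli, of which this lemma is the linear-modulus special case.
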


This result easily follows from a more general result \cite[Proposition 5.12]{DZ} 
 in which the extended function is {\it semiconvex with a general modulus $\vf$}.
It is well-known (see e.g. \cite[p. 30]{CS}) that  $u$ is semiconcave  with a  semiconcavity constant $c$ if and only if $u$ is semiconcave  with modulus $\vf(t)= \frac{c}{2} t$. So, 
 to prove Lemma \ref{extdz}, it is sufficient to apply \cite[Proposition 5.12]{DZ} to $f: -\psi$
 and modulus $\vf(t):= ct$.

More general than  semiconcave functions are DC functions.

\begin{definition}\label{dc}  \rm
 Let $X$ be a Banach spaces, $C\subset X$  an open convex set and
 $Y$ a finite-dimensional Banach space.
\begin{enumerate}
\item[(i)]
A real function on $C$ is called a {\it DC function} if it is a difference of two continuous convex functions.
\item[(ii)]
We say that a mapping $F:C \to Y$ is DC if $y^* \circ F$ is {\it DC} for every functional $y^* \in Y^*$.
%\item[(iii)]
%We say that $f:G \to \R$ (resp. $F:G \to Y$) is locally DC if for each $x \in G$ there exists
% $\delta>0$ such that $f$ (resp. $F$) is DC on $B(x, \delta)$.
\end{enumerate}
\end{definition}
\begin{remark}\label{vldc}
Let $X$, $C$ and  $Y$ be as in Definition \ref{dc}. Then:
\begin{enumerate}
\item[(i)] The system of all DC functions on $C$ is clearly a vector space.
\item[(ii)] $F:C \to Y$ is DC if and only if $y^* \circ F$ is DC for each
   $y^*$ from a basis of $Y^*$. It follows clearly from (i).
\item[(iii)]
If $X$ is a finite-dimensional Hilbert space, then clearly each semiconcave (semiconvex) function on $C$ is DC.
\item[(iv)]	
 If $X$ is a finite-dimensional  Hilbert space and $F: C \to Y$ is $C^{1,1}$, then $F$ is  DC. It follows from \eqref{sscjj}, (iii) and (ii).
%	\item[(v)]
%	If $X$ is finite-dimensional, $F: G \to Y$ is locally DC, $\overline C\subset G$ and  $\overline C$%
%	 is compact, then there exists a Lipschitz DC mapping $\tilde F: X \to Y$ such tha%t
%	 $\tilde F(x) = F(x),\ x \in C$.
	\end{enumerate}
\end{remark}
% easily follows from the well-known facts that each convex function $f$ on $W$ is Lipschitz on each bounded closed convex set  %$K \subset W$ and  $f\restriction_K$ can be extended to a Lipschitz
% convex function on $W$.
%\end{proof}
\begin{definition}\label{plochy}  \rm
	\begin{enumerate}
	\item[(a)]
	 We say that $A \subset \R^d$ is a {\it DC 
	 surface  of dimension $k$} ($0<k<d$), if there exist a $k$-dimensional subspace $W$ of $\R^d$ and a 
	  Lipschitz DC  mapping $\vf: W \to W^{\perp}=:V$ such that  
		  $A = \{w+\vf(w): \ w \in W\}$.
		
		Then we will also say that $A$ is a  DC  surface {\it associated with $V$}.
		\item[(b)]
		For formal reasons, by a  DC  surface of dimension $d$ in $\R^d$ we mean the whole space $\R^d$,
		 and by a DC surface of dimension $k=0$ we mean any singleton in $\R^d$.
		\item[(c)] We say that $A\subset \R^d$ is a {\it semiconcave hypersurface}, if there exist
		 $v \in S_{\R^d}$ and a Lipschitz semiconcave function $g$ on $W:= (\spa\{v\})^{\perp}$ such that
		 $A= \{w + g(w) v:\ w \in W\}$. 
		 \item[(d)]
		We say that $\emptyset \neq A \subset \R^d$ is a  {\it DC (resp. $C^{1,1}$) manifold of dimension $k$}, ($0<k<d$), if for
		 each $a \in A$ there exist  a $k$-dimensional vector space   $W \subset \R^d$, 
		 an open ball $U$ in $W$ and  a DC (resp. $C^{1,1}$) mapping $\vf:U \to W^{\perp}$ such that
		 $P:= \{w + \vf(w):\ w \in U\}$ is a relatively  open subset of $A$ and $a\in P$. 
		\item[(e)] If $k=d$ (or $k=0$), then we mean by a   DC (resp. $C^{1,1}$) manifold of dimension $k$
		 in $\R^d$ a nonempty open set (or an isolated set).
		  \item[(f)]  We say that a $k$-dimensional  ($0<k<d$)  $C^{1,1}$ manifold $ A \subset \R^d$
		   is a {\it uniform $k$-dimen\-sio\-nal  $C^{1,1}$ manifold} if there exists $K>0$ such that
		   (independently on $a$) each $\vf:U \to W^{\perp}$ from (d) can be chosen to be $K$-Lipschitz with
		   $K$-Lipschitz derivative $\vf'$.
		 \end{enumerate}
		 \end{definition}
		 \begin{remark}\label{uzpl}
\begin{enumerate}
\item [(i)]
By Remark \ref{vldc} (iii), each semiconcave hypersurface in $\R^d$ is a $(d-1)$-dimensional DC
 surface.
\item[(ii)]
Using Remark \ref{vldc} (iv), it is easy to see that
each $k$-dimensional $C^{1,1}$ manifold in $\R^d$ is a $k$-dimensional DC manifold.
\item[(iii)]
For a uniform $k$-dimensional  $C^{1,1}$ manifold $A$, the dependence of the tangent space  $\Tan(A,x)$ on $x \in A$ need not 
 be globally Lipschitz, cf.\ Example~\ref{Ex2}, (1).
\end{enumerate}
\end{remark}
		
\section{Basic and auxiliary results on sets of positive reach}\label{bapr}
Given a nonempty set $A\subset\R^d$, we denote by $\Unp A$ the set of all points  $z\in\R^d$ for which the metric projection
\begin{equation}  \label{mproj}
\Pi_A(z):=\{ a\in A:\, \dist(z,A)=|z-a|\}
\end{equation}
is a singleton. Abusing slightly the notation, we shall identify $\Pi_A(z)$ with its unique element in such a case.

If $A\subset \R^d$ and $a\in A$, we define (with $B(a,0):= \emptyset$)
$$\reach(A,a):=\sup\{r\geq 0:\, B(a,r)\subset \Unp A\},$$
and  
$$\reach A:=\inf_{a\in A}\reach (A,a).$$
Obviously, if $\reach A >0$, then $A$ is closed. Further, it is easy to show that
 $\reach A = \infty$ if and only if $A$ is closed convex (cf. \cite[Remark 4.2]{Fe59}).

By $\Tan(A,a)$ we denote the set of all tangent vectors to $A$ at $a$ 
(i.e., $u\in\Tan(A,a)$ if and only if $u=0$ or there exist $a\neq a_i\in A$ and $r_i>0$ such that $a_i\to a$ and $r_i(a_i-a)\to u$, $i\to\infty$) 
which is clearly a closed cone. The normal cone of $A$ at $a$ is defined as the dual cone
$$\Nor (A,a):=\{u\in\R^d:\, \langle u,v\rangle \leq 0\text{ for any }v\in\Tan(A,a)\}.$$
%We further denote the unit normal bundle of $A$ as
%$$\nor A:=\{(a,u):\, a\in A,u\in S_{\R^d}\cap\Nor(A,a)\}.$$

In the following proposition we list some known facts on sets with positive reach.

\begin{proposition}  \label{P_PR}
Assume that $A\subset\R^d$, $a\in A$ and $\reach(A,a)>0$.
\begin{enumerate}
\item[{\rm (i)}] The function $x\mapsto\reach(A,x)$ is either identically equal to $\infty$, or finite and  $1$-Lipschitz on $A$.
\item[{\rm (ii)}] The tangent cone $\Tan(A,a)$ is convex.
\item[{\rm (iii)}] The multifunction $x\mapsto\Nor(A,x)\cap S_{\R^d}$ is upper semicontinuous at $a$.
\item[{\rm (iv)}] If $v\in\Nor(A,a)$ and $b\in A$ then
$$\langle b-a,v\rangle\leq\frac{|b-a|^2|v|}{2\reach(A,a)}.$$
\item[{\rm (v)}] $\Nor(A,a)$ is nontrivial if and only if $a\in\partial A$.
\item[{\rm (vi)}] If $\reach(A,a)>r>0$, then
$$ \Nor(A,a) = \{\lambda v:\ \lambda \geq 0,\, |v|=r, \,\Pi_A(a+v) = a\}.$$ 
\end{enumerate}
\end{proposition}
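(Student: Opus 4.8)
The plan is to reduce all six assertions to Federer's ``ball condition'' and to obtain the rest by elementary manipulations; in substance all of this is contained in \cite[Theorem~4.8]{Fe59}, which is where I would cite the one step that genuinely uses positive reach. I would dispose of (i) first, straight from the definition: for all $x,y\in A$ one has $\reach(A,y)\ge\reach(A,x)-|x-y|$ --- trivial if the right side is $\le 0$, and otherwise $B(y,\rho)\subseteq\Unp A$ whenever $\rho<\reach(A,x)-|x-y|$, since then $|z-x|\le|z-y|+|y-x|<\reach(A,x)$ for every $z\in B(y,\rho)$. This inequality yields the dichotomy at once: $\reach(A,\cdot)$ is $1$-Lipschitz wherever it is finite, and if it equals $\infty$ at one point then it equals $\infty$ everywhere.

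The only substantive input is the \emph{ball condition}: if $0<r<\reach(A,a)$ and $v\in\Nor(A,a)$ with $|v|=r$, then $\Pi_A(a+v)=a$ (equivalently $\overline B(a+v,r)\cap A=\{a\}$); I would take this from \cite{Fe59}. Its converse, by contrast, is elementary: if $z\in\Unp A$ and $a=\Pi_A(z)$, then expanding $|z-b|^2\ge|z-a|^2$ gives $\langle z-a,b-a\rangle\le\frac12|b-a|^2$ for every $b\in A$, and passing to the limit along a sequence $b_i\to a$, $r_i(b_i-a)\to u$ shows $\langle z-a,u\rangle\le 0$ for $u\in\Tan(A,a)$, i.e.\ $z-a\in\Nor(A,a)$; the same inequality also gives the segment property that every point of $[a,z]$ lies in $\Unp A$ and projects to $a$. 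Now (vi) follows: the inclusion $\supseteq$ is this converse applied to $z=a+v$, and $\subseteq$ is the ball condition applied to $r\,v/|v|$. Then (iv) follows from (vi): for $|b-a|\ge 2\reach(A,a)$ it is Cauchy--Schwarz, while for smaller $|b-a|$ one uses $\dist(a+rv,A)=r$, i.e.\ $|b-(a+rv)|^2\ge r^2$, expands, and lets $r\uparrow\reach(A,a)$ (in the convex case $\reach(A,a)=\infty$ the limit gives $\langle b-a,v\rangle\le 0$).

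From (iv) and the continuity of $\reach(A,\cdot)$ in (i), the semicontinuity (iii) is a routine limit: if $a_i\to a$ in $A$, $v_i\in\Nor(A,a_i)\cap S_{\R^d}$, $v_i\to v$, apply (iv) at each $a_i$ and let $i\to\infty$ to obtain $\langle b-a,v\rangle\le|b-a|^2/(2\reach(A,a))$ for all $b\in A$, then pass to the limit along a sequence realizing a tangent vector to conclude $v\in\Nor(A,a)\cap S_{\R^d}$. Statement (v) then splits: if $a\in\intr A$ every direction is tangent, so $\Nor(A,a)=\{0\}$; if $a\in\partial A$, take $y_i\notin A$ with $y_i\to a$ (so $y_i\in\Unp A$ and $\Pi_A(y_i)\to a$ for $i$ large), note that $(y_i-\Pi_A(y_i))/|y_i-\Pi_A(y_i)|\in\Nor(A,\Pi_A(y_i))\cap S_{\R^d}$ by the converse above, and pass to a subsequential limit, which lies in $\Nor(A,a)\setminus\{0\}$ by (iii). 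For (ii), the inclusion $\Tan(A,a)\subseteq\{u:\langle u,v\rangle\le 0\text{ for all }v\in\Nor(A,a)\}$ is automatic from the definition of $\Nor(A,a)$, and the content is the reverse inclusion --- that every such $u$ is a tangent vector --- which, the set of such $u$ being convex, gives convexity of $\Tan(A,a)$; this I would again take from \cite[Theorem~4.8]{Fe59}.

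The main obstacle is the ball condition itself (and, for (ii), the statement that every $u$ with $\langle u,v\rangle\le 0$ for all $v\in\Nor(A,a)$ is tangent, which is of the same nature): the defining normal-cone relation controls $A$ near $a$ only to first order, whereas both assertions are quantitative second-order statements, and supplying that quantitative improvement is exactly the role of positive reach. I see no way to extract it from tangent/normal duality alone, so I would lean on \cite{Fe59}; alternatively one could take the ball condition as the working definition of positive reach and then derive (i)--(vi) precisely as above.
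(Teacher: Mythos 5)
Your proposal is correct and takes essentially the same route as the paper: both treat (i) as immediate from the definition and lean on Federer's Theorem~4.8 for the genuinely substantive facts (the ball condition behind (vi) and the duality statement behind (ii)), with (iii), (iv), (v) then obtained by routine limiting arguments. Your derivation of (v) from (vi) and (iii) is precisely the alternative the paper itself mentions parenthetically (its stated proof instead invokes Corollary~\ref{vnbt}), so there is no real divergence.
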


\begin{proof}
Property (i) follows easily from the definition. For (ii), see \cite[Theorem~4.8.~(12)]{Fe59}. Property (iii) can be deduced from \cite[Theorem~4.8.~(13)]{Fe59} and (iv) follows from \cite[Theorem~4.8.~(7), (12)]{Fe59}. Property (v) (which can be easily deduced from (vi) and (iii)) follows immeditely from Corollary \ref{vnbt}.
For (vi), see \cite[Theorem~4.8.~(12)]{Fe59}.  
\end{proof}

\begin{remark}  \label{LipPr}
If $A\subset\R^d$ is compact and $0<s<\reach A$, then $\Pi_A$ defines a retraction of the open neighbourhood $A_s:=\{z\in\R^d:\, \dist(z,A)<s\}$ onto $A$ (the continuity, even Lipschitzness, of $\Pi_A$ follows from \cite[Theorem~4.8~(8)]{Fe59}). Hence, $A$ is a Euclidean neighbourhood retract and it follows that the fundamental group and the homology groups of $A$ are finitely generated (see, e.g., Corollary~A.8 of \cite{Hat01}). In particular, both $A$ and $\R^d\setminus A$ have only finitely many components.

Using the last property and the Lipschitness of $\Pi_A$ on $A_s$, it is not difficult to prove that $\partial A$ has also finitely many components and any two points lying in the same component of $\partial A$ can be connected {\it in the boundary $\partial A$} by a rectifiable curve.

We note that, however, $\INt A$ may have infinitely many components, see Example~\ref{Ex1}.
\end{remark}

Federer's results of \cite{Fe59} easily imply that
\begin{equation}\label{dime}
\text{if $A \subset \R^d$ is a set of positive reach, then $\dim(A)= \dim_H(A)$.}
\end{equation}
(This fact follows easily from \cite[Remark 4.15 (4), (3), (2)]{Fe59} or, more directly, 
 from Theorem \ref{hlav} below.)

Federer   (\cite[Remark 4.15\, (2)]{Fe59}) also proved that
\begin{equation}\label{dimt}
\text{if $A\subset \R^d$ has positive reach and $a\in A$, then  $\dim(\Tan(A,a)) \leq \dim A$.}
\end{equation}

We will repeatedly use  also the following important Federer's result (\cite[Theorem 4.18]{Fe59}).
\begin{proposition}\label{fedtan}
If $A\subset \R^d$ is a closed set and $0<t<\infty$, then the following two conditions are equivalent:
\begin{enumerate}
\item [{\rm (i)}]
$\reach(A) \geq t.$
\item[{\rm (ii)}]
$\dist(b-a,\Tan(A,a)) \leq |b-a|^2/(2t)$\ \ whenever\ \ $a, \,b\in A$. 
\end{enumerate}
\end{proposition}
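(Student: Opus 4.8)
The plan is to prove Proposition~\ref{fedtan}, i.e.\ the equivalence of $\reach(A)\geq t$ with the quadratic tangency estimate $\dist(b-a,\Tan(A,a))\leq |b-a|^2/(2t)$ for all $a,b\in A$. I would treat the two implications separately.

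\smallskip

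\emph{(i)$\Rightarrow$(ii).} Suppose $\reach(A)\geq t$; I may assume $\reach(A,a)>r$ for any fixed $r<t$ (and then let $r\to t$ at the end, or argue directly when $\reach(A,a)=\infty$, where $A$ is convex and (ii) is trivial since $b-a\in\Tan(A,a)$). Fix $a,b\in A$ with $b\neq a$. The idea is to find, for each small $\rho>0$, a point $z=a+\rho u$ with $|u|=1$, $u\in\Nor(A,a)\cap S_{\R^d}$, such that $\Pi_A(z)=a$; this exists whenever $\Nor(A,a)\neq\{0\}$, i.e.\ $a\in\partial A$, by Proposition~\ref{P_PR}(vi). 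If instead $a\in\INt A$, then $\Tan(A,a)=\R^d$ and (ii) is trivially satisfied, so assume $a\in\partial A$. By Proposition~\ref{P_PR}(iv), any $v\in\Nor(A,a)$ satisfies $\langle b-a,v\rangle\leq |b-a|^2|v|/(2\reach(A,a))\leq |b-a|^2|v|/(2t)$. Now decompose $b-a = p + q$ where $p$ is the metric projection of $b-a$ onto the closed convex cone $\Tan(A,a)$ (convex by Proposition~\ref{P_PR}(ii)) and $q = (b-a)-p$. A standard fact about projections onto closed convex cones is that $q$ lies in the dual cone, i.e.\ $q\in\Nor(A,a)$, and $\langle p,q\rangle=0$, so $\langle b-a,q\rangle = |q|^2 = \dist(b-a,\Tan(A,a))^2$. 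Plugging $v=q$ into the inequality from Proposition~\ref{P_PR}(iv) gives $|q|^2\leq |b-a|^2|q|/(2t)$, hence $|q|=\dist(b-a,\Tan(A,a))\leq |b-a|^2/(2t)$, which is exactly (ii).

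\smallskip

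\emph{(ii)$\Rightarrow$(i).} This is the harder direction and I expect it to be the main obstacle. Assume the quadratic tangency estimate holds for all $a,b\in A$; I want $\reach(A)\geq t$, i.e.\ for every $a\in A$, $B(a,t)\subset\Unp A$. Equivalently, for each $z$ with $\dist(z,A)<t$ I must show $\Pi_A(z)$ is a singleton. Suppose not: take $z$ with $\delta:=\dist(z,A)<t$ and two distinct nearest points $a_1,a_2\in A\cap \overline B(z,\delta)$. The plan is to derive a contradiction by playing the tangency estimate at $a_1$ against $a_2$ (and symmetrically). Since $a_2$ is a nearest point and $|a_1-z|=|a_2-z|=\delta$, the vector $z-a_1$ makes an obtuse-or-right angle with $a_2-a_1$: concretely $|a_2-z|^2\geq |a_1-z|^2$ expands to $\langle a_2-a_1, z-a_1\rangle \leq \tfrac12|a_2-a_1|^2$. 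On the other hand, $z-a_1$ is ``almost normal'' to $A$ at $a_1$: because $a_1$ is the nearest point to $z$, for any $b\in A$ one has $\langle b-a_1, z-a_1\rangle\leq |b-a_1|^2/2$ (same expansion), and a limiting argument along $b\to a_1$ shows $\langle u,z-a_1\rangle\leq 0$ for every $u\in\Tan(A,a_1)$, i.e.\ $z-a_1\in\Nor(A,a_1)$; write $v_1:=(z-a_1)/\delta\in\Nor(A,a_1)\cap S_{\R^d}$. Now apply (ii) with $a=a_1$, $b=a_2$: $\dist(a_2-a_1,\Tan(A,a_1))\leq |a_2-a_1|^2/(2t)$. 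Since $v_1\in\Nor(A,a_1)$ and $|v_1|=1$, $\langle a_2-a_1, v_1\rangle \leq \dist(a_2-a_1,\Tan(A,a_1))\cdot 1 \leq |a_2-a_1|^2/(2t)$ — wait, more carefully: for a unit normal $v_1$, $\langle w, v_1\rangle \le \dist(w,\Tan(A,a_1))$ for any $w$ (since subtracting the nearest tangent vector, which is orthogonal to the normal part in the cone sense, only helps), so $\langle a_2 - a_1, v_1\rangle \le |a_2-a_1|^2/(2t)$, i.e.\ $\langle a_2-a_1, z-a_1\rangle \leq \delta|a_2-a_1|^2/(2t)$. Symmetrically, with $v_2:=(z-a_2)/\delta\in\Nor(A,a_2)$, $\langle a_1-a_2, z-a_2\rangle\leq \delta|a_1-a_2|^2/(2t)$. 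Adding these two inequalities: $\langle a_2-a_1, z-a_1\rangle + \langle a_1-a_2, z-a_2\rangle = \langle a_2-a_1,(z-a_1)-(z-a_2)\rangle = \langle a_2-a_1, a_2-a_1\rangle = |a_2-a_1|^2$, so $|a_2-a_1|^2 \le \delta |a_2-a_1|^2/t$, giving $t\le\delta$, contradicting $\delta<t$. Hence $\Pi_A(z)$ is a singleton, so $B(a,t)\subset\Unp A$ for every $a\in A$, i.e.\ $\reach(A)\geq t$.

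\smallskip

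The delicate technical points to nail down, and where I'd be careful, are: (a) the cone-projection identity $\langle w,v\rangle \le \dist(w,\Tan(A,a))$ for unit $v\in\Nor(A,a)$ — this uses that $\Tan(A,a)$ is a closed convex cone and $\Nor(A,a)$ its polar, so the Moreau decomposition $w = \proj_{\Tan}(w) + \proj_{\Nor}(w)$ with orthogonal summands applies; (b) the limiting argument showing $z-a_1\in\Nor(A,a_1)$ genuinely needs only the nearest-point property of $a_1$, not positive reach, and follows from the definition of $\Tan(A,a_1)$ via sequences $a_i\to a_1$; (c) when $\Nor(A,a)=\{0\}$ (i.e.\ $a\in\INt A$ by Proposition~\ref{P_PR}(v)) both implications are vacuous or trivial and should be dispatched first; and (d) handling $z$ with $\dist(z,A)=0$, i.e.\ $z\in A$, is trivial since then $\Pi_A(z)=\{z\}$. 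The genuinely non-routine step is recognizing that the obtuse-angle inequality at a nearest point combines with the tangency estimate in exactly the symmetric way that forces $t\le\delta$; everything else is bookkeeping with cones and projections.
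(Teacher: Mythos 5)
Your proposal is correct, but note that the paper does not prove this statement at all: it is quoted as Federer's result \cite[Theorem 4.18]{Fe59}, so your argument is a self-contained substitute rather than a parallel of anything in the text. Your implication (i)$\Rightarrow$(ii) runs through Proposition \ref{P_PR}(ii),(iv) (which the paper in turn extracts from Federer's Theorem 4.8, so there is no circularity with Theorem 4.18) together with the Moreau decomposition of $b-a$ with respect to the closed convex cone $\Tan(A,a)$; this is clean and correct, and the preliminary remarks about points $z=a+\rho u$ with $\Pi_A(z)=a$ and about letting $r\to t$ are superfluous. Your implication (ii)$\Rightarrow$(i) — unique nearest point via the symmetric pair of inequalities $\langle a_2-a_1,z-a_1\rangle\le\delta|a_2-a_1|^2/(2t)$ and its mirror image, summed to force $t\le\delta$ — is essentially Federer's original argument and is sound, including the limiting argument showing $z-a_1\in\Nor(A,a_1)$ from the nearest-point property alone.

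One justification should be repaired, though it does not affect validity. In your technical point (a) you defend the inequality $\langle w,v\rangle\le\dist\bigl(w,\Tan(A,a)\bigr)$ for unit $v\in\Nor(A,a)$ by invoking the Moreau decomposition, i.e.\ convexity of $\Tan(A,a)$. In the direction (ii)$\Rightarrow$(i) you may not use that: convexity of the tangent cone (Proposition \ref{P_PR}(ii)) is a consequence of positive reach, which is exactly what you are trying to establish. Fortunately the inequality needs neither convexity nor any orthogonality: for any $p\in\Tan(A,a)$ one has
\[
\langle w,v\rangle=\langle w-p,v\rangle+\langle p,v\rangle\le|w-p|,
\]
since $\langle p,v\rangle\le 0$ by the definition of $\Nor(A,a)$ as the dual cone, and taking the infimum over $p$ gives the claim. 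With that substitution (Moreau is still legitimately available, and needed, in the (i)$\Rightarrow$(ii) direction, where $\reach(A,a)>0$ is assumed), your proof is complete.
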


\begin{lemma}  \label{L_souv}
Let $A\subset\R^d$ and $\rho>0$ be given.
\begin{enumerate}
\item[(i)] If $B$ is a closed ball in $\R^d$ of radius $\rho$, $A\cap B\neq\emptyset$ and $\reach(A,x)>\rho$ for any $x\in A\cap B$, then $\reach(A\cap B)>\rho$.
\item[(ii)] If $\reach A>\rho$ and $A\neq\emptyset$ is contained in some closed ball of radius $\rho$ then $A$ is contractible (and, hence, connected).
\end{enumerate}
\end{lemma}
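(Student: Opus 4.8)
The plan is to treat the two parts separately: part (ii) is self-contained, while part (i) rests on Federer's tangential characterisation (Proposition~\ref{fedtan}). For part (ii): since $\reach A>0$, $A$ is closed, hence (being inside a closed ball) compact; fix $s$ with $\rho<s<\reach A$. By Remark~\ref{LipPr}, $\Pi_A$ is continuous on the tube $A_s=\{z:\dist(z,A)<s\}$, retracts it onto $A$, and fixes $A\subset A_s$. Writing $B=\overline B(c,\rho)\supset A$, for $x\in A$ and $\lambda\in[0,1]$ the point $(1-\lambda)x+\lambda c$ satisfies $\dist\big((1-\lambda)x+\lambda c,A\big)\le\lambda|c-x|\le\rho<s$, so it lies in $A_s$; hence $H(x,\lambda):=\Pi_A\big((1-\lambda)x+\lambda c\big)$ is a continuous map $A\times[0,1]\to A$ with $H(\cdot,0)=\operatorname{id}_A$ and $H(\cdot,1)\equiv\Pi_A(c)$. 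Thus $A$ is contractible, in particular connected.

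For part (i): write $B=\overline B(c,\rho)$. If $A$ is convex then $A\cap B$ is closed and convex and $\reach(A\cap B)=\infty>\rho$, so I assume $A$ is not convex, whence $x\mapsto\reach(A,x)$ is finite and $1$-Lipschitz on $A$ by Proposition~\ref{P_PR}(i). Next, $A\cap B$ is closed: if $x_n\in A\cap B$, $x_n\to x$, then $x\in B$, and for large $n$ one has $|x-x_n|<\rho<\reach(A,x_n)$, so $x\in\Unp A$, hence $x\in A$ (as $\dist(x,A)=0$). Thus $A\cap B$ is compact, $t_0:=\min_{x\in A\cap B}\reach(A,x)$ is attained and exceeds $\rho$, and I fix $t\in(\rho,t_0)$. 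By Proposition~\ref{fedtan} applied to the closed set $A\cap B$, it suffices to prove $\dist\big(b-a,\Tan(A\cap B,a)\big)\le|b-a|^2/(2t)$ for all $a,b\in A\cap B$. Fixing $a,b$, putting $w:=b-a$ ($w=0$ is trivial) and $p:=\Pi_{\Tan(A,a)}(w)$ (the cone $\Tan(A,a)$ being convex by Proposition~\ref{P_PR}(ii)), the Moreau decomposition $w=p+q$ with $q\in\Nor(A,a)$, $q\perp p$, combined with Proposition~\ref{P_PR}(iv) applied to $q/|q|$, gives $|w-p|=|q|=\langle w,q/|q|\rangle\le|w|^2/(2\reach(A,a))\le|w|^2/(2t)$ (if $q=0$ then $w=p\in\Tan(A,a)$ and there is nothing to do).

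It remains to show $p\in\Tan(A\cap B,a)$. If $a\in\INt B$ this is clear, since $A$ and $A\cap B$ coincide near $a$. If $a\in\partial B$, let $n:=(a-c)/\rho$ be the outer unit normal of $B$ at $a$; from $b\in B$ one computes $|w|^2+2\rho\langle w,n\rangle\le0$, i.e.\ $\langle w,n\rangle\le-|w|^2/(2\rho)$, whence
$$\langle p,n\rangle\le\langle w,n\rangle+|w-p|\le\frac{|w|^2}{2}\Big(\frac1{\reach(A,a)}-\frac1{\rho}\Big)<0,$$
the strict inequality because $\reach(A,a)\ge t_0>\rho$. Finally I use the elementary fact that \emph{any} $v\in\Tan(A,a)$ with $\langle v,n\rangle<0$ belongs to $\Tan(A\cap B,a)$: choosing $a_i\in A\setminus\{a\}$, $a_i\to a$, $r_i>0$ with $r_i(a_i-a)\to v$, one has $r_i\to\infty$ (since $|a_i-a|\to0$ while $r_i|a_i-a|\to|v|>0$), and multiplying the condition ``$a_i\in B$'', i.e.\ $|a_i-a|^2\le-2\rho\langle a_i-a,n\rangle$, by $r_i^2$ and letting $i\to\infty$ shows it holds for all large $i$; so $a_i\in A\cap B$ eventually, giving $v\in\Tan(A\cap B,a)$. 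Applying this with $v=p$ finishes the estimate, and Proposition~\ref{fedtan} yields $\reach(A\cap B)\ge t>\rho$.

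The main obstacle is precisely this last step when $a\in\partial B$: the natural candidate $p$ lies a priori only in $\Tan(A,a)$, and in general $\Tan(A\cap B,a)\subsetneq\Tan(A,a)\cap\Tan(B,a)$ unless a transversality (constraint qualification) holds at $a$. The argument is rescued by the \emph{strict} reach gap $\reach(A,a)>\rho$: it forces $p$ to point into the open ball ($\langle p,n\rangle<0$ strictly), after which the blow-up with $r_i\to\infty$ shows the points of $A$ approximating $p$ already fall inside $B$. The only routine verifications I would still carry out are the two quoted inequalities ($\dist\big((1-\lambda)x+\lambda c,A\big)\le\lambda|c-x|\le\rho$ in part (ii), and $|w|^2+2\rho\langle w,n\rangle\le0$ in part (i)) and that Remark~\ref{LipPr} applies, which it does since $A$ is compact here.
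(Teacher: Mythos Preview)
Your argument is correct. The paper does not prove this lemma directly: it cites \cite[Lemma~4.3]{Rat02} for part~(i) and \cite[Remark~4.15~(1)]{Fe59} for part~(ii). Your homotopy in~(ii) via $H(x,\lambda)=\Pi_A\big((1-\lambda)x+\lambda c\big)$ is in fact Federer's construction behind that remark. For~(i), your route through Proposition~\ref{fedtan}, the Moreau decomposition $w=p+q$, and the transversality step---using the strict gap $\reach(A,a)>\rho$ to force $\langle p,n\rangle<0$, then a blow-up showing that any $v\in\Tan(A,a)$ with $\langle v,n\rangle<0$ already lies in $\Tan(A\cap B,a)$---is a clean, fully self-contained alternative to the external reference. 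The key insight you isolate (that the \emph{strict} reach inequality supplies exactly the constraint qualification needed at boundary points of $B$) is the right one and is what makes the intersection formula $\Tan(A\cap B,a)\supset\{v\in\Tan(A,a):\langle v,n\rangle<0\}$ do the job.

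One cosmetic point: the dichotomy you want at the start of~(i) is not ``$A$ convex versus $A$ not convex'' but rather ``$\reach(A,\cdot)\equiv\infty$ on $A$ versus finite and $1$-Lipschitz'' (this is what Proposition~\ref{P_PR}(i) actually gives); in the first case $A$ is automatically closed and convex and you are done, while in the second your compactness argument yields $t_0>\rho$. Your closedness proof for $A\cap B$ works verbatim in either branch, so this is only a matter of phrasing.
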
 

\begin{proof}
Part (i) follows from a more general statement proved in \cite[Lemma~4.3]{Rat02}. Assertion (ii) is a direct consequence of \cite[Remark~4.15~(1)]{Fe59}.
\end{proof}

\begin{lemma}  \label{L_cones}
Assume that $A\subset\R^d$, $a\in A$ and $\reach(A,a)>0$.
Let $C$ be a closed cone contained in $\INt(\Tan(A,a))\cup\{0\}$. Then there exists $r>0$ such that $(a+C)\cap B(a,r)\subset A$.
\end{lemma}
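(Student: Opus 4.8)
The plan is to exploit Federer's tangential characterization (Proposition \ref{fedtan}) to show that points of $a+C$ that are close enough to $a$ cannot escape $A$: near such a point, the metric projection onto $A$ must land back on $a+C$, and a continuity/connectedness argument forces the projected point to coincide with the original one. First I would fix $t$ with $0<t<\reach(A,a)$ and choose $\rho>0$ small enough that $\reach(A,x)>t$ for all $x\in A\cap \overline B(a,\rho)$, using Proposition \ref{P_PR}(i) (the Lipschitzness of $x\mapsto\reach(A,x)$); by Lemma \ref{L_souv}(i) the set $A':=A\cap \overline B(a,\rho)$ then has $\reach A'>t$, so in particular $A'\subset\Unp A'$ on a neighbourhood and $\Pi_{A'}$ is well-defined and continuous there. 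I would also record the elementary fact that, shrinking $\rho$ if necessary, $\Tan(A',a)=\Tan(A,a)$ (intersecting with a ball does not change the tangent cone at an interior point of the ball).

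The core geometric step is the following. Since $C\setminus\{0\}\subset\INt(\Tan(A,a))$ and $C\cap S_{\R^d}$ is compact, there is a $\delta>0$ such that for every unit vector $u\in C$ the whole $\delta$-ball around $u$ lies in $\Tan(A,a)$. Take a point $z=a+v$ with $v\in C$, $|v|=:r$ small. Let $a'=\Pi_{A'}(z)\in A'$ and suppose $a'\neq a$. On one hand, $z-a=v\in C\subset\Tan(A,a)$, and by Proposition \ref{fedtan}(ii) applied to $A'$ we get $\dist(a-a',\Tan(A',a'))\le |a-a'|^2/(2t)$ and also $\dist(z-a',\Tan(A',a'))$ is controlled; on the other hand $z-a'=(z-a)+(a-a')$ and $z-a'$ is \emph{normal} to $A'$ at $a'$ in the sense that $a'=\Pi_{A'}(z)$, i.e. $z-a'\in\Nor(A',a')$ up to scaling (Proposition \ref{P_PR}(vi)). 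Combining $\langle z-a', u\rangle\le 0$ for all $u\in\Tan(A',a')$ with the quantitative tangency estimate at $a$ for the direction $v$ yields, after the standard Federer-type manipulation, an inequality of the form $|a-a'|\le c\,r^2$ for a constant $c$ depending only on $t$ and $\delta$. Hence for $r$ below some threshold $r_0$, the projection $a'$ is forced to lie within distance $o(r)$ of $a$.

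Now I would close the argument by a connectedness/continuity sweep along the segment. Fix $u\in C\cap S_{\R^d}$ and consider the map $\gamma(s)=\Pi_{A'}(a+su)$ for $s\in[0,r_0)$; it is continuous, $\gamma(0)=a$, and by the estimate just derived $\gamma(s)$ stays in a small ball around $a$ where $\Pi_{A'}$ is a retraction. The point $a+su$ lies on the ray $a+C$, so $a+su-\gamma(s)$, being a normal vector at $\gamma(s)$, must be nonpositively paired with every tangent vector at $\gamma(s)$; but the quantitative control shows $\Tan(A',\gamma(s))$ contains directions arbitrarily close to $u$ (by upper semicontinuity of the normal cone, Proposition \ref{P_PR}(iii), together with the openness of $C\setminus\{0\}$ in $\Tan(A,a)$ and the fact that $\gamma(s)\to a$), which is incompatible with $a+su-\gamma(s)$ having positive length unless it is $0$. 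Therefore $\gamma(s)=a+su\in A'$ for all small $s$, and running $u$ over $C\cap S_{\R^d}$ gives $r>0$ with $(a+C)\cap B(a,r)\subset A'\subset A$.

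The main obstacle I anticipate is making the second step genuinely quantitative and uniform in the direction $u\in C$: one must extract a single $r$ that works for the whole cone, which requires combining the compactness of $C\cap S_{\R^d}$, the uniform ``interiority'' constant $\delta$, and the reach bound $t$ into one clean inequality, and then checking that the normal-cone/tangent-cone incompatibility really does force the projection to be the identity rather than merely close. An alternative, perhaps cleaner, route to the same conclusion is to avoid the sweep and instead argue directly that $\dist(a+su,A')\le C s^2$ is incompatible with $a+su\notin A'$ once $s<\delta/C$, because any nearest point would have to sit at distance $\le Cs^2\ll \delta s$ from $a+su$ while $a+su$ points into the interior of the tangent cone; I would try both and keep whichever is shorter.
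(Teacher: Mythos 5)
Your overall strategy (show that the metric projection of $a+su$ onto a localized set $A'$ must coincide with $a+su$) can be made to work, but as written it has two genuine gaps. First, the pivotal estimate of your second step, $|a-a'|\le c\,r^2$ for $a'=\Pi_{A'}(a+v)$, is false as stated: if $a+v$ already lies in $A$ --- which is exactly what the lemma predicts for small $r$ --- then $a'=a+v$ and $|a-a'|=r$. No derivation is given beyond ``the standard Federer-type manipulation'', and the ingredients you list in fact yield only something much weaker, e.g.\ writing $n:=z-a'\in\Nor(A',a')$ and using Proposition~\ref{fedtan} at $a'$ one gets $\langle v,a'-a\rangle\ge |a'-a|^2/2$ for small $r$, which gives no quadratic control of $|a'-a|$. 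Second, and more seriously, your concluding ``incompatibility'' is not a contradiction: knowing that $\Tan(A',\gamma(s))$ contains directions close to $u$ while $n:=a+su-\gamma(s)$ is normal at $\gamma(s)$ only gives $\langle n,u\rangle\le 0$ (up to a small error). Boundary points arbitrarily close to $a$ do carry nonzero normal vectors, so nothing in your sketch forces $n=0$; you yourself flag this as the unresolved obstacle.

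The argument can be closed, but only by a quantitative step you did not supply. Since $C\cap S_{\R^d}$ is compact and lies in $\INt\Tan(A,a)$, upper semicontinuity of normals (Proposition~\ref{P_PR}(iii)) together with the fact that $\Tan(A,x)$ is a closed convex cone (so $\Tan(A,x)=\Nor(A,x)^*$ by the bipolar theorem) shows that for $x\in A$ near $a$ a whole $\delta$-ball of unit directions around $u$ is tangent at $x$, uniformly in $u\in C\cap S_{\R^d}$; applied to $x=\gamma(s)$ (note $|\gamma(s)-a|\le 2s$) this upgrades $\langle n,u\rangle\le0$ to $\langle n,u\rangle\le-\tfrac{\delta}{2}|n|$. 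Then Proposition~\ref{P_PR}(iv) at $\gamma(s)$ with $b=a$ and $a-\gamma(s)=n-su$ gives $|n|^2+\tfrac{\delta}{2}s|n|\le |n-su|^2|n|/(2t)\le (2s^2/t)|n|$, which forces $n=0$ once $s<\delta t/4$; with this in hand the continuity ``sweep'' is unnecessary. For comparison, the paper proves the lemma by a shorter, purely qualitative compactness argument that avoids projections: assuming failure along $y_k\to a$ in $a+C$, it chooses $z_k\in A$ realizing the limit direction $u$, boundary points $x_k\in[y_k,z_k]$, unit normals $v_k\in\Nor(A,x_k)$, and passes to the limit using Proposition~\ref{P_PR}(i),(iii),(iv) to obtain $v\in\Nor(A,a)$ with $\langle v,u\rangle\ge0$, contradicting $u\in\INt\Tan(A,a)$.
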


\begin{proof}
Assume, for the contrary, that for any $k\in\N$, there exists $y_k \in ((a+C)\cap B(a,\frac 1k)) \setminus A$. Passing to a subsequence if necessary, we may assume that
$\frac{y_k-a}{|y_k-a|}\to u\in C$. Since $u \in  \Tan(A,a)$, there exist  $z_k \in A$ such that
 $z_k \to a$ and $\frac{z_k-a}{|z_k-a|}\to u$. We can clearly choose $x_k \in [y_k,z_k] \cap \partial A$ for each $k\in\N$. It is easy to show that $x_k \to a$ and $\frac{x_k-a}{|x_k-a|}\to u$.
By Proposition~\ref{P_PR}~(v) we can take a unit normal vector $v_k\in S_{\R^d}\cap \Nor(A,x_k)$, $k\in\N$, and note that, by  Proposition~\ref{P_PR}~(iv),
$$\left\langle v_k,\frac{x_k-a}{|x_k-a|}\right\rangle\geq -\frac{|x_k-a|}{2 \reach(A,x_k)}.$$
Passing to a subsequence, we may achieve that $v_k\to v\in S_{\R^d}$ and Proposition \ref{P_PR} implies  $v \in \Nor(A,a)$. But then, since $\reach(A,x_k)\to\reach(A,a)$, the above inequality implies that $\langle v,u\rangle\geq 0$. But this is a contradiction, since $u$ lies in the interior of $\Tan(A,a)$ and $v$ is in the dual convex cone to $\Tan(A,a)$.
\end{proof}

\begin{corollary}\label{vnbt}
Assume that $A\subset\R^d$, $a\in A$ and $\reach(A,a)>0$. Then
%\begin{enumerate}
%\item [(i)]
$a\in \intr A$ if and only if  $\Tan(A,a) = \R^d$.
%\item  [(ii)] $a \in \partial A $  if and only if  $\dim \Nor(A,a) \geq 1$.
%\end{enumerate}
\end{corollary}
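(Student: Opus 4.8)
The plan is to prove both implications, with the forward direction being essentially trivial and the reverse direction being the content. First, if $a \in \intr A$, then for small $r>0$ we have $B(a,r) \subset A$, so every vector $u \in \R^d$ is a tangent vector (take $a_i = a + (1/i)\, u/|u|$ for $u \neq 0$), hence $\Tan(A,a) = \R^d$. No use of positive reach is needed here.

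For the converse, suppose $\Tan(A,a) = \R^d$. The idea is to apply Lemma \ref{L_cones} with $C = \R^d$: since $\Tan(A,a) = \R^d$, we certainly have $\R^d \subset \INt(\Tan(A,a)) \cup \{0\}$, so the lemma yields $r > 0$ with $(a + \R^d) \cap B(a,r) \subset A$, i.e.\ $B(a,r) \subset A$, which means $a \in \intr A$. The only thing to check is that $C = \R^d$ legitimately counts as a ``closed cone contained in $\INt(\Tan(A,a)) \cup \{0\}$'' in the sense of Lemma \ref{L_cones}; since $\INt(\R^d) = \R^d$, this is immediate, and $\R^d$ is trivially a closed cone.

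I expect no real obstacle here: the statement is a direct corollary of Lemma \ref{L_cones} in the extremal case $C = \Tan(A,a) = \R^d$, together with the elementary observation for the forward direction. If one prefers not to invoke Lemma \ref{L_cones} with the full space (e.g.\ to keep the cone argument genuinely ``conical''), one can instead argue that $\Tan(A,a) = \R^d$ forces $\Nor(A,a) = \{0\}$ by duality, and then by Proposition \ref{P_PR} (v) conclude $a \notin \partial A$; since $a \in A$ and $A$ is closed (positive reach), this gives $a \in \intr A$. Either route is a couple of lines; I would present the Lemma \ref{L_cones} version as the main argument since it is the most direct, and it also makes the corollary's placement (immediately after that lemma) natural.
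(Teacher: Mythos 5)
Your main argument is correct and is exactly the paper's (implicit) proof: the corollary is stated without proof immediately after Lemma \ref{L_cones} precisely because the nontrivial direction is the lemma applied with $C=\Tan(A,a)=\R^d$, the forward direction being elementary. One caution about your suggested alternative route: within this paper it would be circular, since Proposition \ref{P_PR}(v) is itself deduced from Corollary \ref{vnbt} (the authors only remark that (v) could also be obtained from (vi) and (iii)), so you should keep the Lemma \ref{L_cones} version as the actual proof.
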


\begin{lemma}\label{hk}
Let   $A\subset\R^d$, $\reach(A)>0$, and  $1\leq k \leq d$. Let  $H^k(A)$ be the set of all $a \in A$ such that
 $\Nor(A,a)$ contains a halfspace of dimension $k$. Then $H^k(A)$ is a closed set.
\end{lemma}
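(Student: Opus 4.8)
The plan is to show that the complement of $H^k(A)$ in $A$ is relatively open, i.e.\ if $a_i \in A$, $a_i \to a$, and $a \in H^k(A)$, then $a_i \in H^k(A)$ for all large $i$ — or, equivalently, to argue directly that a limit of points in $H^k(A)$ stays in $H^k(A)$. I will take the second route. So let $a_i \in H^k(A)$ with $a_i \to a \in A$; by definition there is for each $i$ a $k$-dimensional linear subspace $L_i$ and a unit vector $v_i \in L_i$ such that the halfspace $S_i := \{x \in L_i : \langle x, v_i\rangle \ge 0\}$ is contained in $\Nor(A,a_i)$. (Here I am using ``halfspace of dimension $k$'' to mean a halfspace of a $k$-dimensional subspace; one records the pair $(L_i, v_i)$.) The goal is to extract a subsequential limit $(L, v)$ and show the corresponding $k$-dimensional halfspace lies in $\Nor(A,a)$.

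The first step is a compactness argument: the Grassmannian of $k$-subspaces of $\R^d$ is compact and the unit sphere is compact, so after passing to a subsequence I may assume $L_i \to L$ (in the usual metric on the Grassmannian) and $v_i \to v$ with $v \in L$ and $|v| = 1$. Let $S := \{x \in L : \langle x, v\rangle \ge 0\}$. The second and main step is to verify $S \subset \Nor(A,a)$. Fix $u \in S$ with $|u| \le 1$, say; I want $u \in \Nor(A,a)$, i.e.\ $\langle u, w\rangle \le 0$ for every $w \in \Tan(A,a)$. Choose $u_i \in S_i$ with $u_i \to u$ — this is possible because $L_i \to L$ and $v_i \to v$ force the halfspaces $S_i$ to converge to $S$ in the appropriate Painlev\'e--Kuratowski sense (any point of $S$ is a limit of points of $S_i$); one writes $u$ in coordinates adapted to $L$ and transports them via near-isometries $L \to L_i$, then checks the defining inequality $\langle \cdot, v_i\rangle \ge 0$ survives in the limit, adjusting by an $o(1)$ multiple of $v_i$ if necessary to stay in $S_i$. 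Since $u_i \in S_i \subset \Nor(A,a_i)$ and $|u_i| \to |u|$, the upper semicontinuity of the normal-cone multifunction $x \mapsto \Nor(A,x) \cap S_{\R^d}$ at $a$ (Proposition \ref{P_PR}~(iii)), applied to $u_i/|u_i|$ when $u \ne 0$, yields $u/|u| \in \Nor(A,a)$, hence $u \in \Nor(A,a)$ since the normal cone is a cone; the case $u = 0$ is trivial. As $u$ ranges over $S$ we get $S \subset \Nor(A,a)$, so $a \in H^k(A)$. This proves $H^k(A)$ is closed.

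The step I expect to be the main obstacle is the bookkeeping in the second step: making precise that $S_i \to S$ and producing the approximating vectors $u_i \in S_i$ in a way that is uniform enough to feed into the upper semicontinuity statement. Everything else is soft (compactness of the Grassmannian, the cone property, and the cited semicontinuity), but one must be careful that the chosen $u_i$ genuinely lie in $\Nor(A,a_i)$ and not merely close to it, since Proposition \ref{P_PR}~(iii) is a statement about points of the normal cones themselves. A clean way to handle this is to fix an orthonormal basis $w^{(1)}_i, \dots, w^{(k)}_i$ of $L_i$ with $w^{(1)}_i = v_i$, chosen (again by compactness, passing to a further subsequence) so that $w^{(j)}_i \to w^{(j)}$ for an orthonormal basis of $L$ with $w^{(1)} = v$; then every $u \in S$ is $u = \sum_j c_j w^{(j)}$ with $c_1 \ge 0$, and $u_i := \sum_j c_j w^{(j)}_i$ lies in $S_i$ exactly because its $v_i$-component is $c_1 \ge 0$, and $u_i \to u$. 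This makes the approximation completely explicit and removes the only delicate point.
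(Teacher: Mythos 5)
Your proposal is correct and takes essentially the same route as the paper: extract convergent orthonormal frames spanning the halfspaces by compactness and feed the limiting unit normal vectors into the upper semicontinuity statement of Proposition~\ref{P_PR}(iii), using that $\Nor(A,a)$ is a closed cone (and that $A$ is closed, so $a\in A$). The only cosmetic difference is that the paper tests just the finitely many generators $\pm v^1,\dots,\pm v^{k-1},v^k$ and recovers the whole halfspace from convexity of $\Nor(A,a)$, while you transport every vector of the limit halfspace through the converging frames; both versions work.
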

\begin{proof}
Suppose that  $a_i \to a$, where $a_i \in H^k(A)$ for each $i$. Since $A$ is closed, we have $a \in A$. We can clearly for each $i$ find an orthonormal system
 $v_i^1,\dots, v_i^k$ such that all vectors $\pm v_i^1,\dots, \pm v_i^{k-1}$ and $v_i^k$ belong to $\Nor(A,a_i)$.
  Using compactness of $S_{\R^d}$, we can (and will) suppose that $v_i^1\to v^1
  ,\dots, v_i^k\to v^k$. Proposition \ref{P_PR}(iii) easily implies that $\pm v^1,\dots, \pm v^{k-1}$ and $v^k$ belong to $\Nor(A,a)$,
   and therefore $a \in H^k(A)$.
\end{proof}

We will essentially use the following immediate consequence of \cite[Proposition 5.2. and 5.3]{CH}.
\begin{proposition}\label{colhug}
Let   $A\subset\R^d$, $ \reach(A)>r>0$ and $a \in A$. Then the distance function  $d_A:= \dist(\cdot,A)$
is semiconvex 
 on $B(a, r/2)$  with semiconvexity constant $3/r$ and
\begin{equation}\label{clnor}
 \partial d_A(x) = \Nor(A,x) \cap \overline{B(0,1)},\ \ \ x \in A.
\end{equation}
\end{proposition}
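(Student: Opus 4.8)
The plan is to deduce both assertions from the cited results of \cite{CH} together with the basic facts already recorded in Section \ref{bapr}. First I would fix $a\in A$ and work on the ball $B:=B(a,r/2)$. Since $\reach(A)>r$, every point of $A\cap \overline B$ has reach larger than $r$, so by Proposition \ref{P_PR}(iv) the ``support inequality'' $\langle b-x,v\rangle\le |b-x|^2|v|/(2r)$ holds for all $x\in A$, $b\in A$, $v\in\Nor(A,x)$. This uniform quadratic estimate is exactly the hypothesis under which \cite[Proposition 5.2]{CH} asserts that $d_A$ is semiconcave on a suitable tube around $A$ away from $A$ itself; combined with \cite[Proposition 5.3]{CH} (which handles the behaviour of $d_A$ near $A$, i.e.\ on the set where $d_A$ vanishes, using the normal-cone description) one obtains that $d_A$ is semiconvex on a neighbourhood of $A$ of width comparable to $r$. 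I would then simply track the constants through the two cited propositions to see that on $B(a,r/2)$ the semiconvexity constant can be taken to be $3/r$ — this is the one genuinely computational point, but it is bookkeeping rather than a new idea, since the scale in \cite{CH} is normalized and here we only rescale by $r$.

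For the formula \eqref{clnor}, I would argue as follows. For $x\in A$ we have $d_A(x)=0$ and $d_A\ge 0$ everywhere, so $0$ is a minimum of $d_A$; more precisely, for any unit $v\in\Nor(A,x)$ and small $t>0$ the point $x+tv$ satisfies $\Pi_A(x+tv)=x$ by Proposition \ref{P_PR}(vi) (valid since $\reach(A,x)>r>t$), hence $d_A(x+tv)=t$, which gives $d_A^\circ(x,v)\ge 1$ and in fact, together with the $1$-Lipschitz property of $d_A$, yields $\langle u^*,v\rangle\le 1$ for $u^*\in\partial d_A(x)$ with equality forcing $u^*=v/|v|$ direction-wise; running this over all normals shows $\Nor(A,x)\cap\overline B(0,1)\subset\partial d_A(x)$. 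Conversely, if $u^*\in\partial d_A(x)$ then $|u^*|\le 1$ because $d_A$ is $1$-Lipschitz (Proposition \ref{P_PR}(i) is about reach, but $d_A$ is $1$-Lipschitz by the triangle inequality), and for any $v\in\Tan(A,x)$ one has $d_A^\circ(x,v)\le 0$: indeed along approximating points $a_i\in A$ with $(a_i-x)/|a_i-x|\to v/|v|$ the distance function stays $0$, and the quadratic estimate of Proposition \ref{fedtan}(ii) controls the error, giving $\langle u^*,v\rangle\le 0$, i.e.\ $u^*\in\Nor(A,x)$. This is essentially the content of \cite[Proposition 5.3]{CH} and I would either quote it directly or spell out this short argument.

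The main obstacle I anticipate is purely one of \emph{reference hygiene and constants}: making sure that the hypotheses of \cite[Propositions 5.2 and 5.3]{CH} are met with the right quantifiers (they are, thanks to $\reach(A)>r$ giving the uniform bound of Proposition \ref{P_PR}(iv) at \emph{every} point, not just at $a$), and that combining the ``far from $A$'' semiconcavity estimate with the ``near $A$'' estimate produces a single semiconvexity constant valid on all of $B(a,r/2)$, with the specific value $3/r$. There is no deep difficulty here — the geometry is entirely governed by the single quadratic inequality — but the bound $3/r$ (as opposed to, say, $1/r$ or $2/r$) depends on exactly how the two regimes are glued, so I would present that constant-chasing step with a little care and otherwise keep the proof short, leaning on \cite{CH}.
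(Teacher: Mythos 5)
Your primary route---deducing both the semiconvexity statement and \eqref{clnor} directly from \cite[Propositions 5.2 and 5.3]{CH}---is exactly what the paper does: it offers no proof at all and simply records the proposition as an immediate consequence of those two results. Be aware, though, that your side sketches are slightly off (Proposition 5.2 of \cite{CH} gives semiconvexity of $d_A$ near $A$ for positive-reach sets, not semiconcavity ``away from $A$'', and your argument for the inclusion $\Nor(A,x)\cap\overline{B}(0,1)\subset\partial d_A(x)$ only bounds $d_A^\circ(x,v)$ in the single direction $v$, which does not by itself verify the Clarke-subgradient inequality in all directions); since you would in any case quote \cite{CH} directly, this does not affect the proof.
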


We will also need the following simple lemma.

\begin{lemma}\label{prilep}
Let $A\subset \R^d$   and  $\reach(A) > \rho>0$. Let  $a \in A$  and $\Tan(A,a) = \{tu: \ t\geq 0\}$,
 where $|u|=1$. Set $P:= \{a-tu:\ 0<t \leq \rho/4\}$ and  $A^*:= A  \cup P$. Then $\reach(A^*) \geq \rho/4$.
\end{lemma}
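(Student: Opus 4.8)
The plan is to verify the characterization from Proposition \ref{fedtan}: it suffices to check that $\dist(b-a', \Tan(A^*,a')) \leq |b-a'|^2 / (2 \cdot (\rho/4))$ for all $a', b \in A^* = A \cup P$, where $P = \{a - tu : 0 < t \leq \rho/4\}$ is a segment attached to $A$ at the point $a$, emanating in the direction $-u$ (the direction opposite to the tangent ray $\Tan(A,a)$). Since $\reach(A) > \rho$, Proposition \ref{fedtan} already gives the estimate with the better constant $1/(2\rho)$ whenever both points lie in $A$, so the work is entirely in the cases where at least one of $a', b$ lies on the open segment $P$.

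First I would record some elementary facts about tangent cones of $A^*$. For a point $a' = a - tu$ with $0 < t < \rho/4$ in the relative interior of $P$, we have $\Tan(A^*, a') = \spa\{u\}$ (the full line), since $A^*$ locally near $a'$ is just the segment through $a'$ in direction $u$. For $a' = a - (\rho/4) u$, the endpoint, $\Tan(A^*,a') = \{tu : t \geq 0\}$. At the junction point $a$ itself, $\Tan(A^*, a) \supseteq \Tan(A,a) \cup \{-tu : t \geq 0\} = \spa\{u\}$; since $\dim A \geq 1$ here and by \eqref{dimt} considerations this is exactly $\spa\{u\}$, but in any case containing the line $\spa\{u\}$ is all we need for lower bounds on the tangent cone. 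I would then split into cases according to where $a'$ and $b$ sit.

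The key geometric input is the following consequence of $v := $ (any unit vector with) $\Tan(A,a) = \R_{\geq 0} u$, hence $-u \in \Nor(A,a)$ (indeed $\Nor(A,a) = \{-su : s \geq 0\}$ since $\Tan(A,a)$ is a ray): by Proposition \ref{P_PR}(iv) applied to the normal $-u$ at $a$, every $b \in A$ satisfies $\langle b - a, -u \rangle \leq |b-a|^2 / (2\reach(A,a)) \leq |b-a|^2/(2\rho)$, i.e. $\langle b-a, u\rangle \geq -|b-a|^2/(2\rho)$. This controls how far into the half-space $\{ \langle \cdot - a, u\rangle < 0\}$ (the side where $P$ lies) points of $A$ can reach, and it is exactly what is needed to bound $\dist(b - a', \Tan(A^*, a'))$ when $a' \in P$ and $b \in A$: writing $b - a' = (b-a) + tu$ with $t \in (0, \rho/4]$, and noting $\Tan(A^*,a') \supseteq \spa\{u\}$, we get $\dist(b - a', \Tan(A^*,a')) \leq \dist(b-a', \spa\{u\}) = $ (the component of $b-a$ orthogonal to $u$), which one bounds using both the $\reach(A) > \rho$ estimate applied at $a$ and the displacement $t \leq \rho/4$; a short computation with $|b - a'|^2 = |b-a|^2 + 2t\langle b-a,u\rangle + t^2$ then yields the required inequality with constant $1/(2(\rho/4)) = 2/\rho$, with room to spare. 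The remaining cases are easier: when $a' \in A$ and $b \in P$, one estimates $\dist(b - a', \Tan(A^*,a'))$ by comparing with $\dist(a - a', \Tan(A,a'))$ (handled by $\reach(A) > \rho$) plus the short vector $b - a$ of length $\leq \rho/4$; and when both $a', b \in P$, $\Tan(A^*,a')$ contains the line through $a'$ in direction $u$ which already contains $b - a'$, so the distance is $0$.

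The main obstacle I anticipate is the case $a' \in P \setminus \{a\}$, $b \in A$ far from $a$: there one must be careful that the ``tilt'' introduced by moving from $a$ to $a' = a - tu$ does not spoil the estimate, and one genuinely needs the quantitative normal-cone inequality $\langle b - a, u\rangle \geq -|b-a|^2/(2\rho)$ together with the elementary inequality relating $|b-a'|$ and $|b-a|$; bookkeeping the constants to confirm $\rho/4$ (rather than, say, $\rho/2$) is the value that works is the only delicate point, and the factor $4$ is presumably chosen precisely to make these estimates close comfortably. I would also note that $\reach(A^*) < \infty$ is automatic (e.g. $A^*$ is generally non-convex), so it is legitimate to apply Proposition \ref{fedtan}, whose statement requires $t < \infty$.
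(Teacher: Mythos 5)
Your overall route --- verifying condition (ii) of Proposition \ref{fedtan} for $A^*$ with $t=\rho/4$ --- is legitimate and genuinely different from the paper's argument, which instead assumes a point $z$ with two distinct nearest points $y_1\in P$, $y_2\in A$ and derives a contradiction via a point $c\in[z,y_2]$ orthogonal to $u$ and Proposition \ref{P_PR}(vi). Your treatment of the pair $a'\in P$, $b\in A$ is essentially sound, up to one inaccuracy: at the far endpoint $a-(\rho/4)u$ the tangent cone of $A^*$ is only the ray $\{tu:\ t\ge 0\}$, not $\spa\{u\}$, so that subcase needs its own line (it does close: if $\langle b-a,u\rangle<-\rho/4$, then by Proposition \ref{fedtan} at $a$ the distance of $b-a$ to the ray equals $|b-a|\le|b-a|^2/(2\rho)$, forcing $|b-a|\ge 2\rho$ and hence $|b-a+\tfrac{\rho}{4}u|\ge 7\rho/4$). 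Also, $\Nor(A,a)$ is the halfspace $\{v:\ \langle v,u\rangle\le 0\}$, not the ray $\{-su:\ s\ge 0\}$; you only use $-u\in\Nor(A,a)$, so this is harmless.

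The genuine gap is the case you dismiss as easier: $a'\in A$, $b\in P$. Your estimate $\dist(b-a',\Tan(A^*,a'))\le \dist(a-a',\Tan(A,a'))+|b-a|$ carries the first-order additive term $|b-a|$, while the required right-hand side $2|b-a'|^2/\rho$ is quadratic in $|b-a'|$; this cannot work when $a'$ and $b$ are both close to $a$. Concretely, take $A=\{a+su:\ 0\le s\le 1\}$ (convex, so $\reach A=\infty$, and $\Tan(A,a)=\{su:\ s\ge 0\}$), $a'=a+tu$, $b=a-tu$ with small $t$: the quantity to bound is $0$, the required bound is $8t^2/\rho$, but your estimate gives $0+t$, which is larger for $t<\rho/8$. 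Moreover $\Tan(A^*,a')=\Tan(A,a')$ for every $a'\in A\setminus\{a\}$ (since $\dist(a',\overline{P})>0$), so enlarging the tangent cone does not rescue the bound. What this case really needs is a quantitative statement that, for $a'\in A$ near $a$, directions from $a'$ pointing ``past $a$ along $-u$'' are nearly tangent to $A$ at $a'$; one can get it from Proposition \ref{P_PR}(iv) applied to unit normals $v\in\Nor(A,a')$ and the point $a\in A$, combined with the duality $\dist(x,\Tan(A,a'))=\max\bigl\{0,\ \sup\{\langle x,v\rangle:\ v\in\Nor(A,a'),\ |v|\le 1\}\bigr\}$ and with Proposition \ref{fedtan} at $a$ to control the component of $a'-a$ orthogonal to $u$. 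This is exactly the delicate point that the paper's projection/normal-cone argument is designed to handle, so it cannot be treated as a routine afterthought; as written, your proof of this case fails.
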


\begin{proof}
We can and will suppose $a=0$. Suppose to the contrary that there exists $z \in \R^d$
 with $\dist(z,A^*) < \rho/4$ and two different points $y_1,\, y_2 \in \Pi_{A^*}(z)$.
 The case   $y_1,\, y_2 \in \overline{P}$ is clearly impossible, since $\overline{P}$ 
 is convex. If $y_1,\, y_2 \in A$, then $\dist(z,A) = \dist(z,A^*) < \rho/4$,
 which contradicts  $\reach(A) > \rho>0$. So we can suppose that $y_1\in P$ and
 $y_2 \in A \setminus \{0\}$. Now, if $\langle z,u\rangle \geq 0$, then $|z-0| <|z-y_1|$, a contradiction. So suppose  $\langle z,u\rangle < 0$. Clearly $|y_2|< \rho/2$, which implies
 $\langle y_2,u\rangle >0$. Indeed, if $\langle y_2,u\rangle \leq 0$, then $\dist(y_2, \Tan(A,0))
 = |y_2|$ and so Proposition \ref{fedtan} gives $|y_2| \leq |y_2|^2/ (2\rho)$, a contradiction. Consequently
 there exists $c \in [z,y_2]$ with $\langle c,u \rangle = 0$. Clearly $|c| < \rho/2$
 and $c \in \Nor(A,0)$. So  Proposition \ref{P_PR}(vi) easily gives $|c-y_2|>|c|$. Therefore
$$ |z-0| \leq |z-c| + |c| < |z-c| +  |c-y_2| = |z-y_2|,$$
 which contradicts  $0 \in A^*$.
\end{proof}

\section{Singular points of convex  functions}\label{spcf}

There exists a number of articles which study singularities of convex functions. We will deal with  convex functions $f$ on an open convex set $C \subset \R^d$. Singular points $x$
 of $f$ (i.e., the points of non-differentiability of $f$) are usually classified by the dimension
 of $\partial f(x)$; we use the frequent notation 
$$ \Sigma^k(f):= \{x \in C: \dim \partial f(x) \geq k\}.$$
Then $\Sigma^1(f)$ is the set of all non-differentiability points of $f$. It is well-known 
 for a very long time that $\Sigma^d(f)$ is a countable set. A result of \cite{zajcon} says
 that, for $1\leq k < d$ and $A \subset C$, the set $A$ is contained in   
$ \Sigma^k(f)$
 for some convex $f$ on $C$ if and only if $A$ can be covered by countably many
 DC surfaces of dimension $d-k$ (note that this result is stronger than that of \cite{Al} 
  saying that  $ \Sigma^k(f)$ is a $(\mathcal H^{d-k},d-k) $ set of class $C^2$).
		
Following \cite[p. 82]{CS} we will also consider  sets
$$\Sigma^k_r(f) = \{x \in C:  \partial f(x)\ \ \text{contains a}\ k-\text{dimensional  ball of radius}\ \ r>0\}.$$

Using e.g.	\cite[Theorem 4.1, (4.2)]{AAC}, we easily obtain  that
\begin{equation}\label{schn}
\text{$\Sigma^d_r(f)$ is locally finite
for each  convex function $f$ on $C$ and $r>0$,}
\end{equation}
 which is essentially an easy old result (cf. \cite[p. 14 below]{Sch}).

In this section (see Proposition \ref{konzlomy}) we will show, refining slightly the method of  \cite{zajcon}, 
that $\Sigma^k_r(f)$ can be covered by {\it finitely many}  DC surfaces of dimension $d-k$
 for each Lipschitz convex function $f$ on $C$, $0<k<d$ and $r>0$. It provides a probably  new result,
  except the case $d=2$ and $k=1$, in which it easily follows from \cite[Theorem 3.1]{CP}.
 This result will be essentially applied below and
 can be of some independent interest. (Let us remark that the fact that $\Sigma^k_r(f)$ has locally finite
 $\mathcal H^{d-k}$ measure follows from \cite[Theorem 4.1, (4.2)]{AAC}.)

Although the above mentioned result about sets
 $\Sigma^k_r(f)$ does not hold in infinite-dimensional spaces, we prove the basic Lemmas
 \ref{supo} and \ref{proj}
 in general Banach spaces, since it does not increase the difficulty or length of the exposition and it is possible
 that they can find applications also in this more general context.

\begin{lemma}\label{supo}
Let $E$ be a Banach space, $L>0$, and $\emptyset \neq M \subset E \times \R$. Let the following condition hold. 
\begin{enumerate}
\item[($C_L$)] For each point $m= (e,t) \in M$ there exists  $e_m^* \in E^*$ such that $|e_m^*| \leq L$ and the inequality
 $t+ e_m^*(\tilde e -e) \leq  \tilde t$ holds for every  $\tilde m= (\tilde e,\tilde t) \in M$.
\end{enumerate} 
Then there exists a convex $L$-Lipschitz function $g$ on $E$ such that $M \subset \graph g$.
\end{lemma}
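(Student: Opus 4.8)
The plan is to construct $g$ directly as the supremum of the affine functions provided by condition $(C_L)$. For each $m = (e,t) \in M$ define the affine function $a_m(x) := t + e_m^*(x - e)$ on $E$. Each $a_m$ is affine and $L$-Lipschitz, since $|e_m^*| \leq L$. Set $g := \sup_{m \in M} a_m$. As a supremum of affine functions, $g$ is convex and lower semicontinuous wherever it is finite; and since all the $a_m$ share the common Lipschitz bound $L$, the supremum is actually finite everywhere and $L$-Lipschitz (one fixes a basepoint, say any $e_0$ with $(e_0,t_0) \in M$, and notes $a_m(x) \leq t_0 + L|x - e_0|$ by applying $(C_L)$ to compare $a_m$ against $m_0 = (e_0,t_0)$, so the sup is bounded above; finiteness plus convexity plus the uniform Lipschitz bound on the family gives $L$-Lipschitzness of $g$).

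The remaining point is to check that $M \subset \graph g$, i.e.\ that $g(e) = t$ for every $m = (e,t) \in M$. The inequality $g(e) \geq a_m(e) = t$ is immediate from the definition of the supremum. For the reverse inequality $g(e) \leq t$, I would use condition $(C_L)$ in the other direction: for any $\tilde m = (\tilde e, \tilde t) \in M$, the defining inequality $\tilde t + e_{\tilde m}^*(e - \tilde e) \leq t$ says precisely that $a_{\tilde m}(e) \leq t$. Taking the supremum over all $\tilde m \in M$ yields $g(e) \leq t$, hence $g(e) = t$ and $(e,t) \in \graph g$.

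There is essentially no serious obstacle here; the statement is a soft functional-analytic fact and the proof is a routine "sup of affine minorants" argument. The only mild care needed is the order of the logical steps: one should first use $(C_L)$ to see the family $\{a_m\}$ is uniformly bounded above at each point (so $g$ is well-defined and finite), then observe convexity and the uniform Lipschitz estimate, and only then verify the two-sided inequality $g(e) = t$ on $M$ using $(C_L)$ once more. One should also note that no completeness or reflexivity of $E$ is used, so the statement holds for an arbitrary normed (Banach) space, consistent with the paper's remark that Lemmas~\ref{supo} and~\ref{proj} are proved in general Banach spaces.
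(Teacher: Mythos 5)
Your proposal is correct and follows essentially the same route as the paper: define $g$ as the supremum of the affine functions $x\mapsto t+e_m^*(x-e)$, verify finiteness by comparing against a fixed basepoint via $(C_L)$, conclude convexity and $L$-Lipschitzness from the uniform bound $|e_m^*|\le L$, and use $(C_L)$ once more to get $g(e)\le t$ (the inequality $g(e)\ge t$ being immediate), hence $M\subset\graph g$. Your write-up merely spells out the last step, which the paper leaves as "clearly".
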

\begin{proof}
For each $m\in M$, choose a corresponding $e_m^*$ and set
$$  g(x):=  \sup \{ t+ e_m^*(x -e):\ m=(e,t) \in M\},\ \ \ x \in E.$$
 Choose  $(e_0,t_0) \in M$.  Since, for each  $x \in E$ and  $m=(e,t) \in M$, 
$$t+ e_m^*(x -e) = t + e_m^* (e_0-e) + e_m^* (x- e_0) \leq t_0 + L|x-e_0|,$$
 $g$ is finite. So, by its definition, $g$ is a convex and $L$-Lipschitz function.
 Using condition ($C_L$), we clearly obtain $M \subset \graph g$.
\end{proof}

\begin{lemma}\label{proj}
Let $X$ be a Banach space, $X= E \oplus K$, where $\dim E \geq 1$ and $1 \leq \dim K< \infty$.
Let $\Omega \subset X$ be an open convex set, $f$ a continuous convex function on $\Omega$, 
  $L>0$ and $\alpha \in K^*$. Set
	$$ A_{\alpha,L} := \{x \in \Omega:\ \alpha = p_x\restriction_K\ \  \text{for some}\ \ 
	 p_x \in \partial f(x) \ \ \text{with}\ \ |p_x|\leq L\}.$$
	 Then there exists a Lipschitz convex function $g$ on $E$ such that
	\begin{equation}\label{projke}
	f(x) - \langle \pi_K(x), \alpha \rangle = g(\pi_E(x))\ \ \ \text{for each}\ \ \ x \in A_{\alpha,L}.
	\end{equation}
\end{lemma}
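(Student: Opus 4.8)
The plan is to reduce the claim to Lemma~\ref{supo} applied to a suitable subset $M$ of $E \times \R$. Define
$$ M := \{ (\pi_E(x),\, f(x) - \langle \pi_K(x),\alpha\rangle)\ :\ x \in A_{\alpha,L}\} \subset E \times \R.$$
The content of the lemma is that $M$ is a graph of a function over $E$ (so that \eqref{projke} even makes sense) and that this function extends to a convex Lipschitz function $g$ on all of $E$. Both facts will follow if I can verify the hypothesis $(C_L)$ of Lemma~\ref{supo} for $M$ with the constant $L$ (or some constant comparable to $L$; I should keep track, but $L$ itself should work since the $K$-part of the subgradient has been subtracted off). Then $g$ from Lemma~\ref{supo} is the desired function: $M \subset \graph g$ is exactly \eqref{projke}, and the graph property forces the value of $g$ on $\pi_E(A_{\alpha,L})$ to be well-defined by the formula.

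The heart of the matter is thus checking $(C_L)$. Fix $m=(e,t)\in M$, coming from some $x\in A_{\alpha,L}$ with $\pi_E(x)=e$ and $t = f(x) - \langle\pi_K(x),\alpha\rangle$; by definition of $A_{\alpha,L}$ there is $p_x\in\partial f(x)$ with $|p_x|\le L$ and $p_x\restriction_K = \alpha$. I would take $e_m^* := p_x\restriction_E \in E^*$; clearly $|e_m^*|\le |p_x|\le L$. Now let $\tilde m=(\tilde e,\tilde t)\in M$ come from $\tilde x\in A_{\alpha,L}$. The subgradient inequality $f(\tilde x) \ge f(x) + \langle p_x, \tilde x - x\rangle$ holds. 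Decompose $\tilde x - x = (\tilde e - e) + (\pi_K(\tilde x) - \pi_K(x))$ along $E \oplus K$; then $\langle p_x,\tilde x - x\rangle = e_m^*(\tilde e - e) + \langle \alpha, \pi_K(\tilde x) - \pi_K(x)\rangle$ because $p_x$ restricted to $K$ is $\alpha$. Substituting and rearranging,
$$ f(\tilde x) - \langle \pi_K(\tilde x),\alpha\rangle \ \ge\ f(x) - \langle\pi_K(x),\alpha\rangle + e_m^*(\tilde e - e),$$
i.e. $\tilde t \ge t + e_m^*(\tilde e - e)$, which is precisely $(C_L)$. Lemma~\ref{supo} then yields a convex $L$-Lipschitz $g$ on $E$ with $M\subset\graph g$, which is the assertion.

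I expect no serious obstacle here; the only points requiring care are purely bookkeeping: checking that the direct-sum decomposition of $X$ makes $\pi_E, \pi_K$ well-defined bounded projections (given, since $X = E\oplus K$ is assumed as a topological direct sum and $\dim K<\infty$), that the linear functional $p_x$ restricts sensibly to the closed subspaces $E$ and $K$, and the sign conventions for $\langle\cdot,\cdot\rangle$ and $\partial f$. One subtlety worth a sentence: the conclusion only asserts existence of \emph{some} Lipschitz convex $g$, not a canonical one, so no uniqueness or compatibility argument is needed; Lemma~\ref{supo} is applied as a black box once $(C_L)$ is in hand. (For the later finite-DC-surface applications the dimension of $E$ will be controlled, but that plays no role in this lemma.)
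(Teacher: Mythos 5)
Your proposal is correct and follows essentially the same route as the paper: define $M$ as the projected graph, choose $e_m^*:=p_{x_m}\restriction_E$, verify condition $(C_L)$ from the subgradient inequality using $p_{x_m}\restriction_K=\alpha$, and invoke Lemma~\ref{supo}. Nothing further is needed.
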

\begin{proof}
For each $x \in A_{\alpha,L}$, choose a corresponding $p_x \in \partial f(x)$. Further set
$$ M:= \{ (\pi_E(x),f(x) - \langle \pi_K(x), \alpha \rangle):\ \  x \in A_{\alpha,L}\}.$$
 For each $m=(e,t) \in M$ choose  $x_m \in A_{\alpha,L}$ such that
 $e = \pi_E(x_m)$ and $t = f(x_m) - \langle \pi_K(x_m), \alpha \rangle$, and set  $e_m^* := p_{x_m}\restriction_E \in E^*$.

We will show that  the condition ($C_L$) from Lemma \ref{supo} holds. To this end, choose
 arbitrary  $m= (e,t) \in M$, $\tilde m= (\tilde e,\tilde t) \in M$ and set $x:= x_m$, $\tilde x:= x_{\tilde m}$. 
 Since  $p_x \in  \partial f(x)$, we subsequently obtain
\begin{eqnarray*}
f(\tilde x) - f(x) &\geq& \langle \tilde x - x, p_x \rangle = \langle \pi_E (\tilde x - x), e_m^*\rangle + \langle \pi_K (\tilde x - x), \alpha \rangle,\\
f(\tilde x) - \langle \pi_K (\tilde x ), \alpha \rangle  &\geq& f(x) + \langle \pi_E (\tilde x) -
\pi_E ( x), e_m^*\rangle  - \langle \pi_K (x ), \alpha \rangle,\\
\tilde t &\geq& t+ e_m^*(\tilde e -e).
\end{eqnarray*}
Thus the condition ($C_L$) is satisfied and  so by Lemma \ref{supo} there exists a convex $L$-Lipschitz function $g$ on $E$ such that  the inclusion $M \subset \graph g$ (which is clearly equivalent to 
\eqref{projke}) holds.   
\end{proof}
\begin{lemma}\label{koule} Let $X$ be a $d$-dimensional Hilbert space and $K$ its $k$-dimensional
 subspace, $1\leq k <d$. 
%Let $X$ be a Banach space, $X= E \oplus K$, where $\dim E \geq 1$ and $1 \leq k=\dim K< \infty$.
Let $\Omega \subset X$ be an open convex set, $L>0$, $f$ an $L$-Lipschitz convex function on 
$\Omega$, and $\ep>0$.
 For $x \in \Omega$, set  $M_x:= \{p\restriction_K:\ p \in \partial f(x)\} \subset K^*$. Further set
 $$ A_{\ep}^K:= \{x \in \Omega:\ M_x \ \ \text{contains an open ball in}\ \ K^*\ \ \text{with radius}\ \  \ep\}$$
and
$$ Z_{\ep}^K:= \{x \in \Omega:\ f'_+(x,v)+ f'_+(x,-v) > \ep\ \ \ \ \text{whenever}\ \ \ v\in K\ \ \text{and}\ \ |v|=1\}.$$ 
 Then both $A_{\ep}^K$ and $Z
_{\ep}^K$ can be covered by a finite number $N$ of  $(d-k)$-dimensional DC surfaces associated with $K$, where $N=N(k,L, \ep)$ depends only on $k$, $L$ and $\ep$. 

\end{lemma}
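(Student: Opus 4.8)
The plan is to reduce both $A_\ep^K$ and $Z_\ep^K$ to a uniformly bounded family of graphs of convex Lipschitz functions on subspaces of $K$, then apply the one-dimensional case (or rather an iterated reduction) to cover each such graph by finitely many DC surfaces, the count being controlled purely by $k$, $L$ and $\ep$. First I would handle $A_\ep^K$. If $x \in A_\ep^K$, then $M_x = \{p\restriction_K : p \in \partial f(x)\}$ contains an open ball $B(\alpha_x, \ep) \subset K^*$ for some $\alpha_x \in K^*$; since $f$ is $L$-Lipschitz, $\partial f(x) \subset \overline B(0,L)$, so $|\alpha_x| \le L$, and the centers $\alpha_x$ range over the ball $\overline B(0,L) \subset K^*$. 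Cover that ball by finitely many balls $B(\alpha_j, \ep/2)$, $j = 1,\dots,N_0$, with $N_0 = N_0(k,L,\ep)$. If $\alpha_x \in B(\alpha_j,\ep/2)$ then $\alpha_j \in B(\alpha_x,\ep) \subset M_x$, i.e. $\alpha_j = p_x\restriction_K$ for some $p_x \in \partial f(x)$ with $|p_x| \le L$. Hence $A_\ep^K \subset \bigcup_{j=1}^{N_0} A_{\alpha_j, L}$ in the notation of Lemma \ref{proj} (with $E := K^\perp$, $X = E \oplus K$), and by that lemma each $A_{\alpha_j,L}$ lies in the graph over $E$ of $x \mapsto g_j(\pi_E x) + \langle \pi_K x, \alpha_j\rangle$, where $g_j$ is $L$-Lipschitz convex on $E$. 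A convex Lipschitz function on a $(d-k)$-dimensional space has a graph that is trivially a DC surface of dimension $d-k$ associated with $K$ (the map $w \mapsto g_j(w) \cdot$(unit vector in $K$)-direction is convex, hence DC), so this already gives $A_\ep^K$ a cover by $N_0$ DC surfaces — wait, one must be slightly careful: the graph of $x\mapsto g_j(\pi_E x)+\langle\pi_K x,\alpha_j\rangle$ over $E$ is $(d-k)$-dimensional and its defining map into $K$ is affine-plus-convex, hence DC and Lipschitz, so it is indeed a $(d-k)$-dimensional DC surface associated with $K$. That disposes of $A_\ep^K$.

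Next, $Z_\ep^K$. The condition $f'_+(x,v) + f'_+(x,-v) > \ep$ for all unit $v \in K$ says that, writing $h_x := f\restriction_{(x + K)\cap\Omega}$ (a convex function of $k$ real variables), the subdifferential $\partial h_x(x) = M_x$ has "width" greater than $\ep$ in every direction of $K$, since $f'_+(x,v) + f'_+(x,-v) = \max_{p\in M_x}\langle p,v\rangle + \max_{p\in M_x}\langle p,-v\rangle = \wth_v(M_x)$. A compact convex set in $K^* \cong \R^k$ whose width exceeds $\ep$ in every direction contains a ball of radius $c_k \ep$ for a dimensional constant $c_k > 0$ (e.g. $c_k = \ep/(2k)$ works via the inscribed-ball / width inequality). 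Therefore $Z_\ep^K \subset A_{c_k\ep}^K$, and the previous paragraph applies with $\ep$ replaced by $c_k\ep$, giving a cover of $Z_\ep^K$ by $N(k,L,c_k\ep)$ DC surfaces associated with $K$. The final count $N = N(k,L,\ep)$ is then $N_0(k,L,\ep)$ for the $A_\ep^K$ part and $N_0(k,L,c_k\ep)$ for the $Z_\ep^K$ part; taking the maximum (or sum) gives a single $N$ depending only on $k$, $L$, $\ep$.

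I expect the main obstacle to be the DC structure of the graphs, specifically making sure that "graph of a Lipschitz convex function on a $(d-k)$-dimensional subspace $E=K^\perp$" genuinely qualifies as a $(d-k)$-dimensional DC surface associated with $K$ in the sense of Definition \ref{plochy}(a): one needs the fibers to go into $V = W^\perp$ for $W$ the domain subspace, and here $W = E = K^\perp$ while the fiber direction coming from $\langle \pi_K x, \alpha_j\rangle$ plus $g_j$ lands in $\spa(K \cup \{\text{something}\})$ — actually the graph is $\{w + (g_j(w)\,u_j + \text{(affine part)}) : w \in E\}$ which after the natural identification is a graph over $E$ with values in $K = E^\perp$, and since $g_j$ is convex Lipschitz and the affine part is Lipschitz, the fiber map $E \to K$ is DC and Lipschitz; so it is fine, but it deserves an explicit sentence. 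A second, more bookkeeping-type point is verifying the elementary width-to-inscribed-ball inequality for convex bodies in $\R^k$ and checking that the directional-derivative-sum equals the width of $M_x$; both are standard (the latter from $f'_+(x,v) = \max\{\langle p,v\rangle : p \in \partial f(x)\}$ and the fact that restriction to $x+K$ commutes appropriately with taking support functions in directions of $K$), so no real difficulty is anticipated beyond careful statement.
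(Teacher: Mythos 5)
Your reduction of $Z_{\ep}^K$ to $A_{c\ep}^K$ via the identity $f'_+(x,v)=\sup\{\langle v,\alpha\rangle:\alpha\in M_x\}$ and the width--inradius inequality is exactly the paper's argument (the paper uses the constant $\ep/(k+1)$), so that half is fine. The genuine gap is in your covering of $A_{\ep}^K$. From $\alpha_x\in B(\alpha_j,\ep/2)$ you extract only the single membership $\alpha_j\in M_x$, i.e.\ $A_{\ep}^K\subset\bigcup_j A_{\alpha_j,L}$, and then claim that each $A_{\alpha_j,L}$ lies in a $(d-k)$-dimensional DC surface. That claim is false: Lemma \ref{proj} gives only the one scalar identity $f(x)=g_j(\pi_E(x))+\langle \pi_K(x),\alpha_j\rangle$ on $A_{\alpha_j,L}$, which constrains the \emph{values} of $f$ there, not the geometry of the set. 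For instance, if $f$ is affine with gradient $p$, $|p|\leq L$, and $\alpha_j=p\restriction_K$, then $A_{\alpha_j,L}=\Omega$, which is $d$-dimensional. Relatedly, your structural sentence does not parse: the map $x\mapsto g_j(\pi_E x)+\langle\pi_K x,\alpha_j\rangle$ is a real-valued function on $X$, so its ``graph over $E$'' is not a subset of $X$, and for $k>1$ a single scalar function cannot produce the required Lipschitz DC map $E\to K$ demanded by Definition \ref{plochy}(a).

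The missing idea is precisely the one the paper uses, and you in fact have the needed ingredient in hand without exploiting it: by the triangle inequality the \emph{whole ball} $B(\alpha_j,\ep/2)$ lies in $M_x$, not just its center. Fixing such a ball $B$, one chooses $k+1$ functionals $\alpha_0,\dots,\alpha_k\in B$ with $\alpha_1-\alpha_0,\dots,\alpha_k-\alpha_0$ a basis of $K^*$, applies Lemma \ref{proj} to each (all have norm at most $L$, giving Lipschitz convex $g_0,\dots,g_k$ on $E$), and subtracts the identities to eliminate $f(x)$:
\begin{equation*}
\langle \pi_K(x),\alpha_i-\alpha_0\rangle=g_0(\pi_E(x))-g_i(\pi_E(x)),\qquad i=1,\dots,k,
\end{equation*}
valid on $A_B:=\{x\in\Omega:\ B\subset M_x\}$. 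Since the $\alpha_i-\alpha_0$ form a basis of $K^*$, this expresses $\pi_K(x)$ as a Lipschitz DC function of $\pi_E(x)$, so $A_B$ lies in a single $(d-k)$-dimensional DC surface associated with $K$; the $(\ep/2)$-net then gives the finite bound $N(k,L,\ep)$. Without this elimination step (one functional instead of $k+1$), the covering conclusion simply does not follow, so as written the proof of the $A_{\ep}^K$ statement --- and hence also of the $Z_{\ep}^K$ statement, which you reduce to it --- is incomplete.
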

\begin{proof}
 Set $E:= K^{\perp}$. 
In the first step we will prove that, for each open ball $B \subset K^*$, the set
 $A_B:= \{x \in \Omega:\ B \subset M_x\}$ is contained in a $(d-k)$-dimensional DC surface associated with $K$. Choose functionals $\alpha_0, \alpha_1, \dots, \alpha_k$  in $B$ such that
 $\alpha_i - \alpha_0$, $i=1,\dots,k$, form a basis of $K^*$. Observe that $|\alpha_i| \leq L$.
 %where $L$ is a Lipschitz constant of $f$. 
So $A_B\subset A_{\alpha_i,L}$ for each $0\leq i \leq k$ (where
 $A_{\alpha_i,L}$ is defined in Lemma \ref{proj}). Using Lemma \ref{proj}, we obtain Lipschitz
 convex functions $g_i$, $i=0,\dots,k$, on $E$ such that
$$f(x) - \langle \pi_K(x), \alpha_i \rangle = g_i(\pi_E(x))\ \ \ \text{for each}\ \ \ x \in A_B.$$
Consequently, for each $x\in A_B$, we have
$$  \langle \pi_K(x), \alpha_i - \alpha_0 \rangle  = g_0(\pi_E(x)) - g_i(\pi_E(x)),\ \ \ i=1,\dots,k.$$
Since $\alpha_i - \alpha_0$, $i=1,\dots,k$, form a basis of $K^*$, it is easy to see (using Remark
 \ref{vldc}(ii))
that $A_B$ is contained in a DC surface associated with $K$.

In the second step observe that  $M_x \subset B^{K^*}(0,2L)$ for each $x\in \Omega$.  Choose a finite $(\ep/2)$-net $Q$ in $ B^{K^*}(0,2L)$ with $N=N(k,L, \ep)$ elements.
 Then clearly each ball of radius $\ep$ in  $ B^{K^*}(0,2L)$ contains a ball $B^{K^*}(q, \ep/2)$ with $q \in Q$.
So  $A_{\ep}^K \subset \bigcup_{q \in Q} A_{B(q,\, \ep/2)}$ and the assertion on $A_{\ep}^K$ follows.

Further observe that  $M_x$ is convex, and   
$f'_+(x,v) =  \sup\{\langle v, \alpha\rangle:\ \alpha \in M_x\}$ for each $x \in \Omega$ and $v \in K$. Consequently, for each $x \in Z_{\ep}^K$ and each unit $v \in K$, we have
\begin{multline*}\sup\{\langle v, \alpha\rangle:\ \alpha \in M_x\} - \inf\{\langle v, \alpha\rangle:\ \alpha \in M_x\} \\
= \sup\{\langle v, \alpha\rangle:\ \alpha \in M_x\} + \sup\{\langle -v, \alpha\rangle:\ \alpha \in M_x\} = f'_+(a,v)+ f'_+(a,-v) > \ep.
\end{multline*}
Thus the minimal width of $M_x$ in the space $K^*$ (which can be identified with $\R^k$) is at least $\ep$, and consequently (see, e.g., \cite{Eg58}) 
 $M_x$ contains a ball of radius  $\ep/(k+1)$.
 So
\begin{equation}\label{zwit} 
Z_{\ep}^K \subset A_{\frac{\ep}{k+1}}^K 
\end{equation}
and the assertion on $Z_{\ep}^K$ follows.
\end{proof}

\begin{proposition}\label{konzlomy}
Let $X$ be a $d$-dimensional Hilbert space, 
%and $K$ its $k$-dimensional subspace,
 $1\leq k <d$ and  $\ep>0$.
%Set $E:= K^{\perp}$.
%Let $X$ be a Banach space, $X= E \oplus K$, where $\dim E \geq 1$ and $1 \leq k=\dim K< \infty$.
Let $\Omega \subset X$ be an open convex set, $L>0$ and  $f$ an $L$-Lipschitz convex function on $\Omega$. 
Denote 
\begin{enumerate}
\item[a)]\ \ by $\Sigma^k_{\ep}$ the set of all $x \in \Omega$ such that $\partial f(x)$ contains
  an open $k$-dimensional ball (i.e., a ball in a $k$-dimensional affine subspace of $X$) of radius $\ep$ and
\item[b)]\ \ by $Z^k_{\ep}$ the set of all $x \in \Omega$ for which there exists a $k$-dimensional
 space $K \subset X$ such that
 $$f'_+(x,v)+ f'_+(x,-v) > \ep\ \ \ \ \text{whenever}\ \ \ v\in K\ \ \text{and}\ \ |v|=1.$$
\end{enumerate}
Then both $\Sigma^k_{\ep}$ and $Z^k_{\ep}$ can be covered by a finite number $N$ of  $(d-k)$-dimensional DC surfaces, where $N=N(d,k,L, \ep)$ depends only on $d$, $k$, $L$ and $\ep$. 
\end{proposition}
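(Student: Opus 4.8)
The plan is to reduce the statement about the $d$-dimensional space $X$ to Lemma~\ref{koule}, which already handles the corresponding sets for a \emph{fixed} $k$-dimensional subspace $K$. The point of the reduction is that there are only finitely many ``directions'' $K$ that matter, up to a controlled error, because of a compactness argument on the Grassmannian $G(d,k)$ of $k$-dimensional subspaces of $X$. First I would fix notation: for $K \in G(d,k)$ let $A_\ep^K$ and $Z_\ep^K$ be the sets from Lemma~\ref{koule} (defined via restrictions of subgradients to $K$, resp.\ via the directional-derivative width condition on $K$). By Lemma~\ref{koule}, each of $A_\ep^K$ and $Z_\ep^K$ is covered by $N(k,L,\ep)$ DC surfaces of dimension $d-k$, \emph{with a bound independent of $K$}. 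So it suffices to show that $\Sigma^k_\ep$ (resp.\ $Z^k_\ep$) is contained in a \emph{finite} union $\bigcup_{j=1}^m A^{K_j}_{\ep'}$ (resp.\ $\bigcup_j Z^{K_j}_{\ep'}$) for a suitable $\ep'>0$ and suitably chosen subspaces $K_1,\dots,K_m$; then the total number of DC surfaces is $m\cdot N(k,L,\ep')$, which depends only on $d,k,L,\ep$.

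The key step is the following continuity/perturbation observation. Suppose $x \in Z^k_\ep$, witnessed by a $k$-dimensional subspace $K$: $f'_+(x,v)+f'_+(x,-v) > \ep$ for all unit $v \in K$. If $K'$ is another $k$-dimensional subspace that is ``close'' to $K$ (say, in the natural metric on $G(d,k)$, i.e.\ the Hausdorff distance between unit spheres, or equivalently $\|\pi_K - \pi_{K'}\|$ is small), then since $f$ is $L$-Lipschitz the map $v \mapsto f'_+(x,v)$ is $L$-Lipschitz in $v$, so for every unit $w \in K'$ one can find a unit $v \in K$ with $|v-w|$ small, giving $f'_+(x,w)+f'_+(x,-w) \ge f'_+(x,v)+f'_+(x,-v) - 2L|v-w| > \ep/2$, say, provided the closeness is quantified appropriately (depending only on $L$ and $\ep$). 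Hence $x \in Z^{K'}_{\ep/2}$. Now cover the compact Grassmannian $G(d,k)$ by finitely many balls of that fixed radius, pick a center $K_j$ in each; every $x\in Z^k_\ep$ lies in $Z^{K_j}_{\ep/2}$ for the index $j$ whose ball contains its witnessing $K$. This gives $Z^k_\ep \subset \bigcup_j Z^{K_j}_{\ep/2}$ with $m = m(d,k,L,\ep)$.

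For $\Sigma^k_\ep$, I would first reduce to the $Z$-version. If $\partial f(x)$ contains a $k$-dimensional ball $B$ of radius $\ep$ lying in a $k$-dimensional affine subspace, let $K$ be the direction of that subspace (the linear space parallel to it). For a unit $v \in K$, the width of $\partial f(x)$ in direction $v$ is at least $2\ep$; since $f'_+(x,v) = \sup_{p\in\partial f(x)} \langle v,p\rangle$ and $f'_+(x,-v) = \sup_{p\in\partial f(x)}\langle -v,p\rangle$, we get $f'_+(x,v)+f'_+(x,-v) = \operatorname{width}_v(\partial f(x)) \ge 2\ep > \ep$. Hence $\Sigma^k_\ep \subset Z^k_\ep$ (with the same $\ep$), and the cover of $Z^k_\ep$ obtained above covers $\Sigma^k_\ep$ as well; alternatively one can run the same Grassmannian-net argument directly using the sets $A^K$. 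Either way the final count of DC surfaces is $m(d,k,L,\ep)\cdot N(k,L,\ep')$, depending only on $d,k,L,\ep$, as claimed.

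The main obstacle I anticipate is making the perturbation estimate on the Grassmannian fully rigorous and uniform: one must pin down the exact metric on $G(d,k)$ being used, verify that ``$K'$ $\delta$-close to $K$'' implies that each unit vector of $K'$ is within $C\delta$ of a unit vector of $K$ (this is standard — e.g.\ via $\|\pi_K-\pi_{K'}\|$ — but needs a clean statement), and track how $\delta$ must depend on $L$ and $\ep$ so that the width drops by at most a factor $2$. Once that is in place, the covering-number bound for $G(d,k)$ depends only on $d,k$ and $\delta$, hence only on $d,k,L,\ep$, and everything assembles routinely. A secondary minor point is the reduction $\Sigma^k_\ep \subset Z^k_\ep$: one should note that a ball of radius $\ep$ in $\partial f(x)$ has width at least $2\ep$ in every direction of its affine hull, which is immediate, and that $\partial f(x)$ itself may be larger, which only helps.
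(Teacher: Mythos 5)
Your proposal is correct and follows essentially the same route as the paper: the paper likewise takes a finite net of $k$-dimensional subspaces (unit spheres within Hausdorff distance $\ep/(4L)$), uses the $L$-Lipschitzness of $v\mapsto f'_+(x,v)$ to get $Z^k_{\ep}\subset\bigcup_i Z^{K_i}_{\ep/2}$, reduces $\Sigma^k_{\ep}\subset Z^k_{\ep}$ via the width of the ball in $\partial f(x)$, and then invokes Lemma~\ref{koule} with its $K$-independent bound. The quantitative details you flag (the metric on the Grassmannian and the $\delta$-dependence on $L,\ep$) are handled in the paper exactly as you anticipate.
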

\begin{proof}
First we will prove the assertion on $Z^k_{\ep}$.
To this end,  choose $k$-dimensional subspaces $K_1,\dots, K_p$ of $X$ (where $p= p(d,k,L, \ep)$) such that for each  $k$-dimensional space $K$ there exists $1\leq i \leq p$ such that the Hausdorff distance
 of $K \cap S_X$ and $K_i \cap S_X$  is at most  $\frac{\ep}{4L}$. 
 If $x \in Z^k_{\ep}$, choose $K$ by the definition of $Z^k_{\ep}$ and find $K_i$ as above. For each  unit $v \in K_i$ then there exists a unit vector $\tilde v \in K$ such that $|v - \tilde v| < \frac{\ep}{4L}$.
 Since the mapping $w \mapsto f'_+(x,w)$ is $L$-Lipschitz 
(see, e.g., \cite[p. 164, Proposition 7]{Ar}), we easily obtain that
$f'_+(x,v)+ f'_+(x,-v) > \ep/2$.  Thus our assertion follows from Lemma \ref{koule}, since 
$Z^k_{\ep} \subset \bigcup_{i=1}^p  Z^{K_i}_{\ep/2}$.

The assertion on $\Sigma_{\ep}^k$ then follows, since $\Sigma_{\ep}^k\subset Z_{\ep}^k $. 
Indeed, consider an arbitrary $x \in \Sigma_{\ep}^k$ and identify in the usual way $X$ and $X^*$.
 Then there exists a $k$-dimensional space $K\subset X$  and $c\in X$ such that
 $(c+K) \cap B(c,\ep) \subset \partial f(x)$. Consequently, for each unit vector $v\in K$,
\begin{multline*}
 f'_+(x,v)+ f'_+(x,-v) = 
\sup\{\langle v, \alpha\rangle:\ \alpha \in \partial f(x)\} +\sup\{\langle -v, \alpha\rangle:\ \alpha \in \partial f(x)\} \\
 > \langle v, c + \frac{\ep}{2} v\rangle + \langle -v, c - \frac{\ep}{2} v\rangle = \ep.
\end{multline*}
 \end{proof}
As an immediate corollary we obtain the following result.
\begin{corollary}\label{dim1}
Let $\Omega \subset \R^d$ be an open convex set,  $f$ a Lipschitz convex function on $\Omega$,  and $\ep>0$. Let
\begin{eqnarray*}
A_1&:=& \{x \in \Omega: \ f'_+(x,v)+ f'_+(x,-v) > \ep \text{ for some } v \in S_{\R^d}\} \ \text{ and}\\
A_2&:=& \{x \in \Omega: \ \diam(\partial f(x)) > \ep\}.
\end{eqnarray*}
Then both $A_1$ and $A_2$ can be covered by finitely many  DC hypersurfaces.
\end{corollary}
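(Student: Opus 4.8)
The plan is to obtain the statement directly from Proposition \ref{konzlomy} applied with $k=1$; in that case the $(d-k)$-dimensional DC surfaces produced there are precisely DC hypersurfaces, so it is enough to recognize $A_1$ and $A_2$ among the sets treated in that proposition.

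First I would check that $A_1$ is exactly the set $Z^1_\ep$ of Proposition \ref{konzlomy}. A one-dimensional subspace $K\subset\R^d$ is of the form $\spa\{v\}$ with $v\in S_{\R^d}$, and its only unit vectors are $\pm v$; since the quantity $f'_+(x,v)+f'_+(x,-v)$ is unchanged when $v$ is replaced by $-v$, the defining condition of $Z^1_\ep$ for such a $K$ reduces to the single inequality $f'_+(x,v)+f'_+(x,-v)>\ep$. Hence $A_1=Z^1_\ep$, and Proposition \ref{konzlomy} with $k=1$ says that $A_1$ can be covered by finitely many $(d-1)$-dimensional DC surfaces, i.e.\ by finitely many DC hypersurfaces.

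It then remains to see that $A_2\subseteq A_1$ (in fact $A_2=A_1$), so that the same finite family covers $A_2$. Since $f$ is Lipschitz and convex, $\partial f(x)$ is a nonempty compact convex set and $f'_+(x,v)=\max\{\langle v,u\rangle:\ u\in\partial f(x)\}$ for every $v$; consequently
\[
f'_+(x,v)+f'_+(x,-v)=\max_{u\in\partial f(x)}\langle v,u\rangle-\min_{u\in\partial f(x)}\langle v,u\rangle,
\]
which is the \emph{width} of $\partial f(x)$ in the direction $v$. As $\partial f(x)$ is compact and convex, the supremum of this quantity over $v\in S_{\R^d}$ equals $\diam(\partial f(x))$: for the lower bound take $v$ parallel to a diametral segment of $\partial f(x)$, and for the upper bound use $\langle v,u-u'\rangle\le|u-u'|$. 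Therefore $x\in A_2$ if and only if $f'_+(x,v)+f'_+(x,-v)>\ep$ for some $v\in S_{\R^d}$, that is, $A_2=A_1$, and the proof is complete.

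I do not expect any genuine obstacle here: everything substantial is already contained in Proposition \ref{konzlomy}, and the only auxiliary inputs are the standard identification of $f'_+(x,\cdot)$ with the support function of $\partial f(x)$ and the elementary fact that the diameter of a compact convex set coincides with its largest width. The only point requiring a little care is matching the definition of $Z^1_\ep$ with that of $A_1$ (and then of $A_2$), which is the step carried out above.
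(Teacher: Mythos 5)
Your proposal is correct and follows exactly the route the paper intends: the corollary is stated as an immediate consequence of Proposition \ref{konzlomy} with $k=1$, with $A_1=Z^1_\ep$ and $A_2$ identified with $A_1$ (or placed inside $\Sigma^1_{\ep/2}$) via the standard support-function formula $f'_+(x,v)=\max\{\langle v,u\rangle:\,u\in\partial f(x)\}$. Your verification of these identifications, including the width/diameter argument for the compact convex set $\partial f(x)$, is accurate.
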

%\begin{proof}
%The desired property of $A_1$ immediately follows from Proposition \ref{konzlomy}.
%If $x \in A_2$, then we can find $\alpha_1$, $\alpha_2$ in $\partial f(x)$ and a unit $v \in \R^n$
% such that $\langle v, \alpha_2- \alpha_1 \rangle > \ep$. Then
%$$  f'_+(x,v)+ f'_+(x,-v) \geq \langle v, \alpha_2 \rangle  + \langle -v, \alpha_1 \rangle >\ep.$$
% So $A_2 \subset A_1$ and we are done.
%\end{proof}

\section{Singular points of sets with positive reach}\label{sppr}

If $A \subset \R^d$ is a set with positive reach  and $0\leq k \leq d$,
 we set, following  Federer (\cite[p. 447]{Fe59}),
$$ A^{(k)}:= \{a\in A: \dim\, \Nor (A,a) \geq d-k\}.$$
(The points of $A^{(k)}$ are, for $k\neq d$, sometimes called $k$-singular boundary points of $A$
 and the symbol $\Sigma^k(A)$ is then used instead of   $A^{(k)}$; see, e.g., \cite{Hu}. However,
 we will use Federer's notation.)

Federer proved (\cite[p. 447]{Fe59}) that $ A^{(k)}$ is countably $k$ rectifiable.  
 Using Proposition \ref{colhug} and  results of \cite{zajcon}, it is easy to obtain a stronger result. (It will be obtained below
  as a consequence of more subtle Proposition \ref{poakep}; see Remark \ref{hug}.)

\begin{proposition}\label{sipr}
If $A \subset \R^d$ is a set with positive reach  and $1\leq k \leq d-1$, then
$A^{(k)}$ can be covered by coutably many $k$-dimensional DC surfaces.
\end{proposition}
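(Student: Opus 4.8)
The plan is to deduce Proposition \ref{sipr} from Proposition \ref{konzlomy} together with the semiconvexity of the distance function established in Proposition \ref{colhug}. First I would fix $A\subset\R^d$ with $\reach(A)>r>0$ and recall from Proposition \ref{colhug} that on each ball $B(a,r/2)$ the distance function $d_A$ is semiconvex with semiconvexity constant $3/r$; equivalently, $f:=d_A+(3/2r)|\cdot|^2$ is convex on $B(a,r/2)$, and $f$ is Lipschitz there since $d_A$ is $1$-Lipschitz and $B(a,r/2)$ is bounded. Moreover \eqref{clnor} gives $\partial d_A(x)=\Nor(A,x)\cap\overline B(0,1)$ for $x\in A$, and adding the smooth term $(3/2r)|\cdot|^2$ only translates the subgradient by the single vector $(3/r)x$, so $\partial f(x)=\Nor(A,x)\cap\overline B(0,1)+(3/r)x$ for $x\in A\cap B(a,r/2)$. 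Hence $\partial f(x)$ is, up to translation, a capped convex cone.

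The next step is the geometric observation linking $\dim\Nor(A,a)\ge d-k$ to the sets $\Sigma^{d-k}_\varepsilon$ of Proposition \ref{konzlomy}. If $a\in A^{(k)}$, then $\Nor(A,a)$ is a convex cone (dual to $\Tan(A,a)$, which is convex by Proposition \ref{P_PR}(ii)) of dimension at least $d-k$, so it contains a $(d-k)$-dimensional subcone, and intersecting with $\overline B(0,1)$ this cone contains an open $(d-k)$-dimensional ball of some positive radius $\varrho_a$ centered at a point at distance $\varrho_a$ from the apex. Translating by $(3/r)a$, the same is true of $\partial f(a)$. Thus, on the ball $B(a_0,r/2)$ around any fixed point $a_0$, one has $A^{(k)}\cap B(a_0,r/2)\subset\bigcup_{n\ge1}\Sigma^{d-k}_{1/n}$, where $\Sigma^{d-k}_{1/n}$ is formed from the Lipschitz convex function $f$ on $B(a_0,r/2)$. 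By Proposition \ref{konzlomy} each $\Sigma^{d-k}_{1/n}$ is covered by finitely many $k$-dimensional DC surfaces (here $d-(d-k)=k$), so $A^{(k)}\cap B(a_0,r/2)$ is covered by countably many. I would remark that here I only need the \emph{countable} covering, so even the older result from \cite{zajcon} on $\Sigma^j(f)$ (covering by countably many DC surfaces of the right dimension) suffices; the refined Proposition \ref{konzlomy} is what will later upgrade "countably" to "locally finitely many."

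Finally, to pass from a local statement to the global one asserted in the proposition, I would use a countable cover of $\R^d$ by balls of radius $r/2$ (say centered at points of $A$, or just at a countable dense set of $\R^d$ meeting the relevant neighborhood of $A$), noting that $\reach A>0$ forces $A$ closed and $\reach(A,\cdot)$ is $1$-Lipschitz (Proposition \ref{P_PR}(i)), so near every point of $A$ there is a uniform lower bound on the reach and Proposition \ref{colhug} applies; a countable union of finite (or countable) families of DC surfaces is still countable, giving the result. The main obstacle — though it is a mild one since the work is already done upstream — is the bookkeeping in the second step: verifying carefully that the capped normal cone genuinely contains an \emph{open} $(d-k)$-ball (not merely a relatively open subset of a lower-dimensional flat) so that the definition of $\Sigma^{d-k}_\varepsilon$ in Proposition \ref{konzlomy}(a) is met, and keeping track that the smooth quadratic perturbation does not disturb the subgradient dimension. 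I expect this to be a short argument once the translation formula for $\partial f(x)$ is written down explicitly.
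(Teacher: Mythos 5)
Your argument is correct and is essentially the paper's own proof: the paper obtains Proposition \ref{sipr} from Proposition \ref{poakep} together with Remark \ref{hug}, i.e.\ it also convexifies $d_A$ locally via Proposition \ref{colhug}, translates the normal-cone information into a $(d-k)$-dimensional ball inside $\partial f$, applies Proposition \ref{konzlomy}, and decomposes $A^{(k)}=\bigcup_i A^{(k)}_{1/i}$ before passing to a countable cover by balls. You simply carry out the same steps without naming the sets $A^{(k)}_{\ep}$, and your side remarks (openness of the ball, the translation of the subgradient by the gradient of the quadratic term) are all fine.
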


\begin{remark}\label{dvezle}
We will improve Proposition \ref{sipr} as follows:
\begin{enumerate}
\item [(i)] \  $A^{(d-1)}= \partial A$ can be locally covered by {\it finitely many} $(d-1)$-dimensional {\it semiconcave} surfaces  (Theorem \ref{pokrhr}).
\item [(ii)]\
\ If $1\leq k=\dim A$, then 
\begin{enumerate}
\item\ $A^{(k-1)}$ can be  locally covered by {\it finitely many} $(k-1)$-dimensional DC surfaces (Proposition \ref{zlfed}) and
\item $A^{(k)}$ can be  locally covered by {\it finitely many} $k$-dimensional DC surfaces (Theorem \ref{hlav}).
\end{enumerate}
\end{enumerate}

To prove (ii), we will classify the points of 
 $A^{(k)}$ by a ``strength of singularity'' in a similar (but different) way as in \cite{Hu}
 (cf. Remark \ref{hug} below):
\end{remark}

\begin{definition}\label{akep}  \rm
If $A \subset \R^d$ is a set with positive reach, $0\leq k<d$ and $\ep>0$, then we denote
 by  $A^{(k)}_{\ep}$ the set of all points of $A$, for which $\Nor (A,a)\cap B(0,1)$ contains a
 $(d-k)$-dimensional ball of radius $\ep$.
\end{definition}

\begin{proposition}\label{poakep}
Let $A \subset \R^d$ be a  set with positive reach, $0\leq k<d$ and $\ep>0$.
 Then $A^{(k)}_{\ep}$ can be locally covered by   finitely many $k$-dimensional  DC surfaces.

 Moreover, if $k>0$, $\reach A> r >0$ and $a \in A$, then the set  $A^{(k)}_{\ep}\cap B(a,r/2)$
  can be covered by finite number $N$ of $k$-dimensional  DC surfaces, where
   $N= N(d,k,\ep)$ depends only on $d$, $k$ and $\ep$.
\end{proposition}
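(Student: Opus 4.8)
The strategy is to reduce the statement about $A^{(k)}_\ep$ to the statement about the singular sets $\Sigma^k_{\ep'}$ of a convex function, so that Proposition~\ref{konzlomy} applies directly. Fix $a \in A$ and $r$ with $\reach A > r > 0$. By Proposition~\ref{colhug}, the distance function $d_A = \dist(\cdot, A)$ is semiconvex on $B(a, r/2)$ with semiconvexity constant $3/r$, and $\partial d_A(x) = \Nor(A,x) \cap \overline{B(0,1)}$ for $x \in A$. First I would write $d_A(x) = g(x) + (3/(2r))|x|^2$ on $B(a,r/2)$, where $g$ is concave; equivalently $f := -g = (3/(2r))|x|^2 - d_A$ is convex and Lipschitz on $B(a,r/2)$ (shrink the ball slightly and multiply by a cutoff if one wants it globally defined and Lipschitz, but since Proposition~\ref{konzlomy} only needs a convex Lipschitz function on an open convex set, restricting to $B(a,r/2)$ — or a slightly smaller concentric ball — suffices). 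The key point is the relation between $\partial f$ and $\partial d_A$: since the quadratic term is $C^{1,1}$, $\partial f(x) = \tfrac{3}{r}x - \partial d_A(x)$ for every $x$, so $\partial f(x)$ is a translate (by a fixed vector, for fixed $x$) of $-(\Nor(A,x)\cap \overline{B(0,1)})$. Translation and the reflection $v \mapsto -v$ are isometries of $\R^d$, hence preserve the property "contains a $(d-k)$-dimensional ball of radius $\ep$."

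Consequently, for $x \in A \cap B(a,r/2)$, we have $x \in A^{(k)}_\ep$ if and only if $\partial d_A(x)$ contains a $(d-k)$-dimensional ball of radius $\ep$, which holds if and only if $\partial f(x)$ does — i.e. if and only if $x \in \Sigma^{d-k}_\ep$ for the convex function $f$ on $B(a,r/2)$ (in the notation of Proposition~\ref{konzlomy}, with the roles of $k$ and $d-k$ interchanged: there $\Sigma^j_\ep$ is the set where $\partial f$ contains a $j$-dimensional ball of radius $\ep$, and a $(d-k)$-dimensional ball is what we want). Therefore
$$ A^{(k)}_\ep \cap B(a, r/2) \subset \Sigma^{d-k}_\ep(f). $$
Here $1 \le d-k \le d$; when $k=0$ we get $d-k=d$, which is the excluded case in Proposition~\ref{konzlomy} — but this is exactly why the "moreover" clause requires $k>0$, so $1 \le d-k \le d-1$ and Proposition~\ref{konzlomy} applies. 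I would then invoke Proposition~\ref{konzlomy} with $j = d-k$ to cover $\Sigma^{d-k}_\ep(f)$ by $N$ DC surfaces of dimension $d - (d-k) = k$, where $N = N(d, d-k, L, \ep)$; since $L$ (a Lipschitz constant for $f$ on $B(a,r/2)$) can be bounded in terms of $r$ and $d$ only — $|\nabla((3/(2r))|x|^2)| \le (3/r)|x| \le \tfrac{3}{r}(|a|+r/2)$ won't do for a uniform bound, so instead note $d_A$ is $1$-Lipschitz and work on a ball of fixed radius $r/4$ around each point, or better: observe that on $B(a,r/2)$ one may recenter so that the quadratic term has gradient bounded by $3/2$, giving $L \le 3/2 + 1 = 5/2$ — one gets $N = N(d,k,\ep)$ as claimed. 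This gives the "moreover" part.

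For the first (non-uniform, "locally covered") assertion, including the case $k=0$: $A^{(0)}_\ep$ consists of points where $\Nor(A,a) \cap \overline{B(0,1)}$ contains a $d$-dimensional ball of radius $\ep$; by \eqref{schn} (local finiteness of $\Sigma^d_r$ of a convex function), applied to $f$ on $B(a,r/2)$ via the same translation argument, $A^{(0)}_\ep \cap B(a,r/2)$ is a finite set, i.e. locally coverable by finitely many $0$-dimensional DC surfaces (singletons). For $0 < k < d$, the "locally covered" statement is weaker than the "moreover" statement and follows immediately from it by covering $A$ with countably many balls $B(a, \reach(A,a)/2)$ and noting any compact subset meets only finitely many.

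The main obstacle I anticipate is purely bookkeeping: making sure the Lipschitz constant $L$ of the auxiliary convex function can be taken to depend only on $d$ (and $r$, which then drops out because the $B(a,r/2)$ is rescaled or because $d_A$ is $1$-Lipschitz and the quadratic correction can be recentered), so that the final count $N$ genuinely depends only on $d, k, \ep$. The translation/reflection argument relating $\partial f$ and $\Nor(A,\cdot)$ is routine, as is the identification of the dimension of the covering surfaces (codimension in $\Sigma^{j}_\ep$ is $j$, and $j = d-k$). No genuinely new idea beyond Propositions~\ref{colhug} and~\ref{konzlomy} is needed.
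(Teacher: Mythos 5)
Your strategy is exactly the paper's: use Proposition~\ref{colhug} to identify $\partial d_A(x)$ with $\Nor(A,x)\cap\overline{B(0,1)}$ at points of $A$, turn $d_A$ into a Lipschitz convex function on $B(a,r/2)$ by adding a quadratic, observe that the subdifferentials differ only by an isometry (so the ``contains a $(d-k)$-dimensional ball of radius $\ep$'' property transfers), and then apply Proposition~\ref{konzlomy} with ball dimension $d-k$ (and \eqref{schn} for $k=0$); the recentering trick giving $L=5/2$ is also the paper's.

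There is, however, one step that is false as written: you unpack ``$d_A$ is semiconvex with constant $3/r$'' as ``$d_A(x)=g(x)+\frac{3}{2r}|x|^2$ with $g$ concave'', and hence take $f:=\frac{3}{2r}|x|^2-d_A$ as your convex function. By Definition~\ref{semieuk}, semiconvexity of $d_A$ with constant $3/r$ means that $-d_A-\frac{3}{2r}|x|^2$ is concave, i.e.\ that $d_A(x)+\frac{3}{2r}|x|^2$ is convex; your $f$ is in general \emph{not} convex (take $A=\{0\}\subset\R$, $d_A(x)=|x|$: then $cx^2-|x|$ is not convex near $0$, and the failure occurs precisely at the points of $A$ that the argument is about), so Proposition~\ref{konzlomy} cannot be applied to it. The repair is immediate and stays entirely inside your own argument: take $f(x):=d_A(x)+\frac{3}{2r}|x-a|^2$ (convex by Proposition~\ref{colhug}, and $\frac{5}{2}$-Lipschitz on $B(a,r/2)$ exactly by your recentering remark); then $\partial f(x)=\partial d_A(x)+\frac{3}{r}(x-a)$, so $\partial f(x)$ is a plain translate of $\Nor(A,x)\cap\overline{B(0,1)}$ — the reflection $v\mapsto -v$ in your write-up is only an artifact of the sign error — and the rest of your argument (Proposition~\ref{konzlomy} with $N=N(d,d-k,5/2,\ep)$, \eqref{schn} for $k=0$, deducing the local statement from the ``moreover'' one) goes through verbatim and coincides with the paper's proof.
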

\begin{proof}
Consider an arbitrary $a \in A$ and denote $d_A:= \dist(\cdot,A)$.  By Proposition \ref{colhug} the function $g(x):=d_A(x) + 3(2r)^{-1} |x|^2,\  x\in  B(a, r/2)$, is convex, which clearly implies that also the function
$$f(x):=d_A(x) + 3 (2r)^{-1} |x-a|^2,\ \ x\in  B(a, r/2)$$
 is convex.  Moreover, since $d_A$ is $1$-Lipschitz, it is easy to see that $f$ is Lipschitz with
  constant $1 + 3/2 = 5/2$.
By the basic properties of Clarke subgradient (see \cite[Corollary 1 of Proposition 2.3.3]{Cl90}) we have
 $\partial f(x) = \partial d_A(x) + (3/r) (x-a)$ for each $ x \in B(a, r/2)$.
 Hence Proposition \ref{colhug} implies that, for each $x \in A^{(k)}_{\ep} \cap  B(a, r/2)$, $\partial f(x)$
  contains a $(d-k)$-dimensional ball of radius $\ep$. So, if $k>0$,  Proposition \ref{konzlomy}
	 implies that 
	 $A^{(k)}_{\ep} \cap  B(a, r/2)$  can be covered by a finite number $N$ of $k$-dimensional  DC surfaces, where
   $N= N(d,k,\ep)$. If $k=0$, then \eqref{schn} easily gives that $A^{(k)}_{\ep}$ is locally finite.
\end{proof}

\begin{corollary}\label{copo}
Let $A \subset \R^d$ be a compact set with positive reach, $0\leq k<d$ and $\ep>0$.
 Then $A^{(k)}_{\ep}$ can be  covered by   finitely many $k$-dimensional DC surfaces.
\end{corollary}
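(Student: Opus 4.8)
The plan is to obtain Corollary~\ref{copo} as an immediate consequence of the quantitative ``moreover'' part of Proposition~\ref{poakep}, using the compactness of $A$. First I would note that since $A$ is compact with positive reach, either $\reach A = \infty$ (the case when $A$ is closed convex) or $0 < \reach A < \infty$; in both situations one can fix a real number $r > 0$ with $r < \reach A$. Because $A$ is compact, it can then be covered by finitely many balls $B(a_1, r/2), \dots, B(a_m, r/2)$ whose centers $a_1,\dots,a_m$ lie in $A$.

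For the case $0 < k < d$, I would apply the second assertion of Proposition~\ref{poakep} separately at each center $a_i$: the set $A^{(k)}_{\ep} \cap B(a_i, r/2)$ can be covered by at most $N = N(d,k,\ep)$ many $k$-dimensional DC surfaces. Since
$$ A^{(k)}_{\ep} \;\subset\; \bigcup_{i=1}^{m} \bigl(A^{(k)}_{\ep} \cap B(a_i, r/2)\bigr), $$
the whole set $A^{(k)}_{\ep}$ is covered by at most $mN$ many $k$-dimensional DC surfaces, which is finite.

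It remains to deal with the degenerate case $k = 0$. Here I would instead invoke the other conclusion of Proposition~\ref{poakep}, namely that $A^{(0)}_{\ep}$ is locally finite. A locally finite subset of $\R^d$ has no accumulation points, so its intersection with the compact set $A$ — which is all of $A^{(0)}_{\ep}$ — must be finite. A finite set is a finite union of singletons, and singletons are by Definition~\ref{plochy}(b) precisely the $0$-dimensional DC surfaces, so the claim holds in this case as well.

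I do not expect any genuine obstacle: the argument is a routine compactness reduction, and the only two points that warrant a word of care are the possibility $\reach A = \infty$ (handled by simply choosing any finite $r>0$) and the case $k=0$ (handled via local finiteness plus compactness), both addressed above.
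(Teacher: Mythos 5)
Your argument is correct and is essentially the intended one: the paper states Corollary~\ref{copo} without proof precisely because it follows from Proposition~\ref{poakep} by the routine compactness reduction you carry out (covering the compact $A$ by finitely many balls $B(a_i,r/2)$ with $0<r<\reach A$ and applying the proposition on each, with the $k=0$ case reducing to local finiteness of $A^{(0)}_{\ep}$). Your two side remarks (the case $\reach A=\infty$ and the singleton convention for $k=0$) are accurate and do not change the substance.
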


\begin{remark}\label{hug}
 Since clearly  $A^{(k)} = \bigcup_{i=1}^{\infty} A^{(k)}_{1/i}$, Proposition \ref{poakep}
 implies Proposition \ref{sipr}.
\end{remark}

\begin{remark}\label{hug2}
Proposition \ref{poakep} is closely related to a result of \cite{Hu}.
Namely, Hug (\cite[p. 2]{Hu}) considers (for $0\leq k <d$) sets
$$ \Sigma^k(A,\ep):= \{x \in \partial A:\ \mathcal H^{d-1-k}(\Nor(A,x) \cap S_{\R^d}) \geq \ep\},$$
 which are closely related to our sets  $A^{(k)}_{\ep}$. Namely, it is not difficult to show
 that each set $\Sigma^k(A,\ep_1)$ is contained in some  $A^{(k)}_{\ep_2}$ and vice versa.
 Consequently, \cite[Theorem~3.2]{Hu} implies (cf.\ also \cite[Corollary~3.6]{Hu}) that $A^{(k)}_{\ep}$ has locally finite
 $\mathcal H^{k}$ measure and Proposition~\ref{poakep} implies that $\Sigma^k(A,\ep)$
 can be locally covered by   finitely many $k$-dimensional  DC surfaces.
\end{remark}

Suppose that $A \subset \R^d$ is a set with  positive reach. Federer (\cite[p. 447]{Fe59}) proved the following interesting result:
\begin{multline}\label{tanvec}
\text{ if $\dim(A)=k\geq 1$, then $A= A^{(k)} \neq A^{(k-1)}$ and, for $a \in A \setminus  A^{(k-1)}$},\\
 \text{$\Tan(A,a)$ is a $k$-dimensional vector space.}
\end{multline}
Moreover, Federer claimed without a proof 
(\cite[Remark 4.20]{Fe59}) that, if $\dim(A)=k\geq 1$, then
\begin{equation}\label{kmjc}
 \text{ $A^{(k-1)}$ is closed, and}
\end{equation}
\begin{equation}\label{fecjj}
\text{ $A \setminus  A^{(k-1)}$ is a $k$-dimensional
 $C^{1,1}$ manifold.}
\end{equation}
We will prove  \eqref{kmjc} in the following proposition, and statement \eqref{fecjj} will be proved
 in Theorem \ref{hlav} below. 

\begin{proposition}\label{zlfed}
Let $A \subset \R^d$ be a set with positive reach and $\dim(A)=k\geq 1$. Then
\begin{enumerate}
\item[(i)]
 $A^{(k-1)}$ is closed and 
\item[(ii)]
 $A^{(k-1)}$ can be locally covered by finitely many  DC surfaces
 of dimension $k-1$.

 Moreover, if $k>1$, $\reach A> r >0$ and $a \in A$, then the set  $A^{(k-1)}\cap B(a,r/2)$
  can be covered by a finite number $N$ of $(k-1)$-dimensional DC surfaces, where
   $N= N(d,k)$ depends only on $d$ and $k$.
\end{enumerate}
\end{proposition}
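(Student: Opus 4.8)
\textbf{Proof plan for Proposition \ref{zlfed}.}

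The plan is to relate $A^{(k-1)}$ to a singular set of the kind controlled in Section \ref{spcf}, and then exploit that $\dim(A) = k$. First I would note that part (i) is almost immediate: since $\dim(A) = k$, the result \eqref{dimt} gives $\dim(\Tan(A,a)) \le k$ for every $a \in A$, hence $\dim(\Nor(A,a)) = d - \dim(\aff(\Tan(A,a)))$ is forced to be at least $d-k$ precisely when $\Tan(A,a)$ spans a space of dimension $\le k-1$. The point is that $A^{(k-1)}$ is exactly the set $H^{d-k+1}(A)$ of points whose normal cone contains a halfspace of dimension $d-k+1$ — indeed, by Proposition \ref{P_PR}(ii) the tangent cone is convex, and by \eqref{tanvec} the generic tangent cone is a $k$-dimensional subspace; a dimension count shows that $\dim\Nor(A,a) \ge d-k+1$ forces $\Nor(A,a)$ to contain a $(d-k+1)$-dimensional halfspace (one linear direction is lost, the rest is a full subspace by convexity of $\Tan$ and a support argument). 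Then Lemma \ref{hk} gives closedness of $A^{(k-1)}$ directly. I should double-check that $H^{d-k+1}(A) = A^{(k-1)}$ and not merely one inclusion; the subtle direction is that $\dim\Nor(A,a) = d-k+1$ with $\Nor$ a convex cone need not a priori contain a halfspace of that dimension, but here the constraint $\dim\Tan(A,a) \le k$ together with duality and convexity of $\Tan$ rules out the ``pointed'' case — this is the first place I'd be careful.

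For part (ii), the strategy is to cover $A^{(k-1)} \cap B(a, r/2)$ by the sets $A^{(k)}_\ep \cap B(a,r/2)$ of Proposition \ref{poakep} in a way that, crucially, uses only \emph{finitely many} values of $\ep$ — which is where $\dim(A) = k$ enters decisively. The key observation: if $a \in A^{(k-1)}$, then $\Nor(A,a)$ contains a $(d-k+1)$-dimensional halfspace, so it certainly contains a $(d-k+1)$-dimensional ball; but more is true — because $\Tan(A,a)$ lives in dimension $\le k$, the normal cone is "fat" in a quantitative sense that can be bounded below uniformly. Concretely, I expect that there is a single $\ep_0 = \ep_0(d,k) > 0$ such that $A^{(k-1)} \subset A^{(k-1)}_{\ep_0}$, using compactness of the relevant Grassmannian/sphere data and Proposition \ref{P_PR}(iii) (upper semicontinuity of the normal cone). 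Granting this, Proposition \ref{poakep} applied with $k$ replaced by $k-1$ and $\ep = \ep_0$ gives exactly: $A^{(k-1)} \cap B(a,r/2)$ is covered by $N(d,k-1,\ep_0) = N(d,k)$ DC surfaces of dimension $k-1$. The case $k=1$ degenerates to $A^{(0)}$ being locally finite, handled by the $k=0$ branch of Proposition \ref{poakep}.

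The main obstacle I anticipate is establishing the uniform lower bound $\ep_0(d,k)$ on the radius of a $(d-k+1)$-dimensional ball inside $\Nor(A,a) \cap B(0,1)$ for $a \in A^{(k-1)}$. One cannot simply say "a halfspace contains balls of every radius up to $1$" because the relevant halfspace sits inside the \emph{unit} ball, and a priori its "width" in the ambient directions could be pinched. The resolution should be: $\Nor(A,a)$ is a cone, so if it contains a $(d-k+1)$-dimensional halfspace $L^+$ through the origin, then $L^+ \cap B(0,1)$ already contains a $(d-k+1)$-ball of radius $1/3$ (say), giving $\ep_0 = 1/3$ outright — no dimension-of-$A$ argument needed for this step after all, only the halfspace structure. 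So the real content is pushing the equivalence $A^{(k-1)} = H^{d-k+1}(A)$ through cleanly; once that is in hand the finite covering is a one-line consequence of Proposition \ref{poakep}. I would also verify the uniformity of $N$ in the "Moreover" clause survives, which it does since $\ep_0$ depends only on $d,k$.
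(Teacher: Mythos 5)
Your proposal is correct and follows essentially the same route as the paper: identify $A^{(k-1)}$ with the set $H^{d-k+1}(A)$ of Lemma \ref{hk} (via $\dim\Tan(A,a)\leq k$ from \eqref{dimt}, so $\Nor(A,a)$ contains the $(d-k)$-dimensional space $(\spa\Tan(A,a))^{\perp}$ plus one extra direction, hence a $(d-k+1)$-dimensional halfspace), deduce (i) from Lemma \ref{hk}, and then note that such a halfspace meets $B(0,1)$ in a $(d-k+1)$-dimensional ball of a fixed radius, so $A^{(k-1)}\subset A^{(k-1)}_{\ep_0}$ with $\ep_0$ an absolute constant (the paper uses $1/2$, you use $1/3$) and (ii) follows from Proposition \ref{poakep}. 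The only blemish is your passing formula $\dim\Nor(A,a)=d-\dim(\aff\Tan(A,a))$, which is false for pointed tangent cones, but it is not used in your actual argument.
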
  
\begin{proof}
Let $a \in A^{(k-1)}$. By definition of $A^{(k-1)}$,   $\Nor (A,a)$ is a closed convex cone of dimension
 at least $d-k+1$.  Since  $\dim \Tan(A,a) \leq k$  by \eqref{dimt},   we obtain that $\Nor (A,a)$ contains a vector space of dimension  $d-k$. Thus 
$\Nor (A,a)$ clearly contains a halfspace  of dimension $d-k+1$. Consequently we obtain
 that $A^{(k-1)} = H^{d-k+1}(A)$, (where $H^{d-k+1}(A)$ is defined in Lemma \ref{hk}). Thus (i)
 follows from Lemma \ref{hk}.

Further, it is easy to see that $H^{d-k+1}(A) \subset A^{(k-1)}_{1/2}$, and so (ii)
 follows from Proposition \ref{poakep}.
\end{proof}

\begin{theorem}\label{pokrhr}
 Let $A \subset \R^d$ be a set with $\reach(A) > r>0$ and $a \in \partial A$. Then there exists
 a finite system $\cal S$ of semiconcave hypersurfaces
 which covers $B(a,r/2) \cap \partial A$.

Moreover, $\card \cal S = N$, where $N= N(d)$ depends only on $d$.
\end{theorem}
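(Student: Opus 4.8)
The plan is to prove Theorem~\ref{pokrhr} directly: partition the boundary points in $B(a,r/2)$ according to a fixed finite net of normal directions and realize each class as a subset of a semiconcave graph by means of the extension Lemma~\ref{extdz}. Fix once and for all an absolute constant $\delta\in(0,\tfrac14)$ (say $\delta=\tfrac18$) and a $\delta$-net $\{v_1,\dots,v_N\}$ of $S_{\R^d}$, so that $N=N(d)$ depends only on $d$; put $W_i:=(\spa\{v_i\})^{\perp}$. For each $i$ let $E_i$ be the set of $x\in B(a,r/2)\cap\partial A$ admitting a unit normal $n\in\Nor(A,x)\cap S_{\R^d}$ with $|n-v_i|<\delta$. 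Since $\Nor(A,x)$ is nontrivial at every $x\in\partial A$ by Proposition~\ref{P_PR}~(v), each such $x$ carries a unit normal, hence $B(a,r/2)\cap\partial A=\bigcup_{i=1}^N E_i$, and it suffices to place each nonempty $E_i$ on a single semiconcave hypersurface associated with $v_i$.

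Fix $i$ with $E_i\neq\emptyset$. For $x\in E_i$ choose a unit normal $n_x$ with $|n_x-v_i|<\delta$ and write $n_x=\alpha_x v_i+m_x$ with $m_x\in W_i$; then $1-\alpha_x\le\tfrac12|n_x-v_i|^2<\tfrac12\delta^2$, so $\alpha_x>1-\delta>0$ and $|m_x|=(1-\alpha_x^2)^{1/2}<\delta$. Given $x,y\in E_i$, set $\tau:=\langle y-x,v_i\rangle$ and $\omega:=\pi_{W_i}(y)-\pi_{W_i}(x)$, so $|y-x|^2=|\omega|^2+\tau^2$. Since $\reach(A,x)\ge\reach(A)>r$, Proposition~\ref{P_PR}~(iv) gives $\langle y-x,n_x\rangle\le|y-x|^2/(2r)$; expanding $\langle y-x,n_x\rangle=\alpha_x\tau+\langle\omega,m_x\rangle$ and dividing by $\alpha_x$ yields
\[
 \tau-h_x(\omega)\;\le\;\frac{|y-x|^2}{2r\,\alpha_x},\qquad\text{where }h_x:=-\alpha_x^{-1}m_x\in W_i^{*},\quad|h_x|\le\tfrac{\delta}{1-\delta}.
\]
Because $|y-x|<r$ on $B(a,r/2)$, applying this estimate also with the roles of $x$ and $y$ interchanged gives $|\tau|<\tfrac{1/2+\delta}{1-\delta}\,|y-x|$, and since $\tfrac{1/2+\delta}{1-\delta}<1$ for $\delta<\tfrac14$, the identity $|y-x|^2=|\omega|^2+\tau^2$ forces both that $\pi_{W_i}$ is injective on $E_i$ and that $g_i(\pi_{W_i}(x)):=\langle x,v_i\rangle$ defines an $L$-Lipschitz function on the bounded set $P_i:=\pi_{W_i}(E_i)$, with $L$ an absolute constant. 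Feeding $|y-x|^2\le(1+L^2)|\omega|^2$ back into the displayed inequality shows that $g_i$ satisfies hypothesis \eqref{subgr} of Lemma~\ref{extdz} with the functionals $h_x$ (of norm at most $K:=\tfrac{\delta}{1-\delta}$) and constant $c:=\tfrac{1+L^2}{2r(1-\delta)}$. Hence Lemma~\ref{extdz} furnishes a Lipschitz semiconcave $F_i$ on $W_i$ with $F_i\restriction_{P_i}=g_i$, and $S_i:=\{w+F_i(w)v_i:\,w\in W_i\}$ is a semiconcave hypersurface (Definition~\ref{plochy}~(c)) containing $E_i$.

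Taking $\cal S:=\{S_1,\dots,S_N\}$, with $F_i:=0$ (so $S_i=W_i$, trivially a semiconcave hypersurface) for those $i$ with $E_i=\emptyset$, we obtain a finite system of semiconcave hypersurfaces covering $B(a,r/2)\cap\partial A$ with $\card\cal S\le N=N(d)$ (and exactly $N$ if one counts with multiplicity). The substantive step, and the only delicate one, is the chain of estimates in the second paragraph: one must take $\delta$ small enough that the ``vertical'' increment $\langle y-x,v_i\rangle$ is strictly dominated by $|y-x|$, so that each $E_i$ is a genuine Lipschitz graph over $W_i$; granted that, Proposition~\ref{P_PR}~(iv) together with the bound $|y-x|<r$ supplies the quadratic remainder needed to invoke Lemma~\ref{extdz}, and the semiconcavity of the extension is then automatic. (Compare the analogous but softer Corollary~\ref{dim1}, which only yields DC hypersurfaces and is not used here.)
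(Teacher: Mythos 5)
Your proposal is correct and follows essentially the same route as the paper: cover $B(a,r/2)\cap\partial A$ by classes of boundary points whose unit normals lie near a fixed finite net of directions, show each class is a Lipschitz graph over the orthogonal hyperplane using Proposition~\ref{P_PR}~(iv) together with $|y-x|<r$, verify the one-sided quadratic estimate \eqref{subgr}, and conclude via Lemma~\ref{extdz}. The only differences are cosmetic (a $\delta$-net with $\delta=1/8$ and the decomposition $n_x=\alpha_x v_i+m_x$ in place of the paper's $1/4$-net and the functional $h_p(u)=-\langle u,n_x\rangle/\langle v,n_x\rangle$), and your estimates check out.
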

\begin{proof}
Choose a finite $(1/4)$-net $F$ in $S_{\R^d}$ with the cardinality $N=N(d)$. For each $v \in F$ set
$$ M_v:= \{ z \in  \partial A:\ |v-n_z|< 1/4\ \ \ \text{for some unit vector}\ \ \ n_z\in \Nor(A,z)\}.$$
  By Proposition \ref{P_PR} (v), $\partial A = \bigcup_{v \in F} M_v$, and so it is sufficient
	 to show that
	for each $v \in F$, the set $S:=B(a,r/2) \cap M_v$ is  a subset of a semiconcave hypersurface.
	 To this end, fix an arbitrary $v \in F$, and for each $z \in M_v$, choose some $n_z$ from
	 the definition of $M_v$. Denote  $V:= \spa \{v\}$ and $W:= V^{\perp}$.
	  Observe that if $x\in S$, then
		\begin{equation}\label{klss}
		\langle v, n_x \rangle = \langle v, v\rangle + \langle v,n_x- v\rangle \geq 3/4 >0.
		\end{equation}
		Without any loss of generality, we can suppose that
$a=0$ and  $v= e_d$. We will identify $W= \spa\{e_1,\dots,e_{d-1}\}$ with $\R^{d-1}$.
 Now consider  two arbitrary points $x \in S$, $y\in S$. Using Proposition \ref{P_PR}(iv)
	 and $|x-y| < r$, we obtain
	\begin{equation}\label{stner}
		\langle y-x, n_x\rangle  \leq \frac{|y-x|^2}{2 r} \leq \frac{|y-x|}{2}.
		\end{equation}
		Writting $y-x = w_1 + v_1$ with $w_1\in W$ and $v_1\in V$, \eqref{stner} and  $|v-n_x|<1/4$ yield
\begin{multline*}
	|v_1|= |\langle y-x,v \rangle| = |\langle y-x,n_x\rangle + \langle y-x, v - n_x\rangle| \\
	 \leq |y-x|/2 + |y-x|/4 \leq (3/4) |v_1| + (3/4) |w_1|,
	\end{multline*}
	which immediately implies $|v_1| \leq 3 |w_1|$.
Consequently  
	 $S$ is the graph of a $3$-Lipschitz function  $\psi$
	 defined on a set $P \subset \R^{d-1}$.
	
	Now fix an arbitrary $p \in P$, denote $x:=(p, \psi(p))$ and define $h_p \in W^*= (\R^{d-1})^*$
	 putting  $h_p(u):= -\langle u,n_x\rangle/\langle v,n_x\rangle$ for $u \in W$.
	
	Using $|n_x|=1$ and \eqref{klss}, we see that $|h_p| \leq 4/3$. Further observe
	 that the graph of $h_p$ is orthogonal to $n_x$:
	\begin{equation}\label{kolm}
	\langle n_x, \Delta + h_p(\Delta) v\rangle = \left\langle n_x, \Delta - \frac{\langle \Delta,n_x\rangle}{\langle v,n_x\rangle}\, v    \right\rangle =0\ \ \text{for each}\ \ \Delta \in W.
	\end{equation}
	Now we will verify the condition \eqref{subgr} from Lemma \ref{extdz}. To this end, consider
 $p$ and $x$ as above   and an arbitrary $\Delta \in \R^{d-1}$ such that $p+ \Delta \in P$. 
Set  $\omega:= 	\psi(p+\Delta)- \psi(p) - h_p (\Delta)$.
Since  \eqref{subgr} is trivial for $\omega \leq 0$, we suppose $\omega>0$.
 Denote  
$$ y:= (p+\Delta, \psi(p+\Delta)),\ \  z:= 	(p+\Delta, \psi(p) + h_p (\Delta)).$$
Then 
$ y-z = \omega\, v$ and therefore $\omega = |y-z|$. Further
$\langle n_x, z-x\rangle = \langle n_x, \Delta + h_p(\Delta) v   \rangle = 0$ by \eqref{kolm}.
Using also 
 \eqref{klss} we obtain
\begin{equation}\label{ctver}
 \langle n_x, y-x\rangle = \langle n_x, z-x\rangle + \langle n_x, y-z\rangle = \langle n_x, y-z\rangle  
= \langle n_x, \omega\,  v\rangle > 0.
\end{equation}
Further
\begin{multline*}
|y-z| = |\langle v, y-z \rangle| \leq |\langle n_x, y-z \rangle| + |\langle v - n_x, y-z \rangle|
\leq |\langle n_x, y-z \rangle| + \frac{1}{4} |y-z|.
\end{multline*}
Hence, using also \eqref{ctver} and \eqref{stner}, we obtain
$$ \omega = |y-z| \leq \frac 43 |\langle n_x, y-z \rangle| = \frac 43 \langle n_x, y-x \rangle
 \leq \frac 43 \frac{|y-x|^2}{2 r}.$$
So, since $3$-Lipschitzness of $\psi$ gives $|y-x| = |(\Delta, \psi(p+\Delta)- \psi(p))| \leq
 4|\Delta|$, we obtain
$$
\psi(p+\Delta)- \psi(p) - h_p (\Delta) = \omega \leq \frac{32}{3}\, \frac{|\Delta|^2}{r}
=: c |\Delta|^2.
$$
So \eqref{subgr} holds and thus Lemma \ref{extdz} gives that $S$ is  a subset of a semiconcave hypersurface.
\end{proof}

\section{Sets of positive reach in the plane}\label{plane}

We start with two lemmas which will be needed later.

\begin{lemma}  \label{L1_pom}
Let $\delta,\rho>0$ and $0<\eta<1$ be such that 
$$\rho\eta >\delta.$$
Let further $A\subset\R^2$ and a vertical segment $S\subset\R^2$ of length less or equal to $2\delta$ be given. Assume that for any $x\in A\cap S$, $\reach(A,x)\geq\rho$ and
\begin{equation}   \label{L1E}
|\langle v,e_2\rangle|\geq\eta |v|\text{ whenever }v\in\Nor(A,x).
\end{equation}
Then, the intersection $A\cap S$ is connected.
\end{lemma}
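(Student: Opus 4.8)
## Proof plan for Lemma \ref{L1_pom}

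The plan is to argue by contradiction: if $A\cap S$ is disconnected, then (since $S$ is a segment, hence homeomorphic to an interval) there exist two points $p,q\in A\cap S$ with $p$ strictly below $q$ (say) and an open subsegment $(p',q')\subset S$ between them that misses $A$, where $p'$ is the topmost point of $A\cap S$ at or below some level and $q'$ the bottommost point at or above it. More precisely, let $I$ be a connected component of $S\setminus A$ whose closure meets $A\cap S$ in two points; call them $p$ (lower) and $q$ (upper), so the relatively open vertical segment $(p,q)$ is disjoint from $A$ while $p,q\in A$. The geometric idea is that condition \eqref{L1E} forces the normal cones at $p$ and $q$ to be ``nearly vertical'', so $\Tan(A,p)$ and $\Tan(A,q)$ are contained in narrow horizontal double-cones; in particular neither $p$ nor $q$ can have a tangent vector pointing straight up or straight down along $S$. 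I will use this together with Lemma \ref{L_cones} and the quantitative estimate $\rho\eta>\delta$ to derive a contradiction.

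First I would record the cone geometry. Since every $v\in\Nor(A,x)$ satisfies $|\langle v,e_2\rangle|\ge\eta|v|$ for $x\in A\cap S$, the dual description gives that $\Tan(A,x)$ is contained in the horizontal double cone $\{u:|\langle u,e_2\rangle|\le\sqrt{1-\eta^2}\,|u|\}$; more simply, $e_2\notin\Tan(A,x)$ and $-e_2\notin\Tan(A,x)$ — indeed if $\pm e_2\in\Tan(A,x)$ then taking $v=\pm e_2\in$ (a suitable normal) would violate... wait, more directly: $e_2\in\Tan(A,x)$ would force $\langle v,e_2\rangle\le 0$ for all $v\in\Nor(A,x)$, and combined with \eqref{L1E} this means every normal has $\langle v,e_2\rangle\le-\eta|v|<0$, so the normal cone lies strictly in the lower half-space; symmetrically, $-e_2\in\Tan(A,x)$ puts the normal cone strictly in the upper half-space. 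Both cannot happen simultaneously, and in fact \eqref{L1E} says the normal cone avoids the horizontal hyperplane entirely, so $\Nor(A,x)$ is contained in one open half-space $\{\langle\cdot,e_2\rangle>0\}$ or $\{\langle\cdot,e_2\rangle<0\}$ (it is connected as a convex cone). Thus at each $x\in A\cap S$ exactly one of the two vertical directions, $e_2$ or $-e_2$, lies in $\INt\Tan(A,x)$ — the one opposite to the half-space containing the normal cone — unless $\Tan(A,x)=\{0\}$, but that is impossible because $\Nor(A,x)$ would then be all of $\R^2$, contradicting \eqref{L1E}. Actually I should be slightly careful: I want $e_2$ or $-e_2$ in the \emph{interior} of the tangent cone, which holds provided $\Nor(A,x)$ is contained in the \emph{open} half-space, and \eqref{L1E} gives exactly that. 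So: for each $x\in A\cap S$, either $e_2\in\INt\Tan(A,x)$ or $-e_2\in\INt\Tan(A,x)$.

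Now apply this at the point $p$ (the lower endpoint of the gap $(p,q)$). If $e_2\in\INt\Tan(A,p)$, then by Lemma \ref{L_cones} (applied with the one-dimensional closed cone $C=\{t e_2:t\ge0\}$) there is $r>0$ with $(p+C)\cap B(p,r)\subset A$; but $p+C$ runs straight up along $S$, so points of $S$ immediately above $p$ belong to $A$, contradicting that $(p,q)$ misses $A$ (note $q$ is strictly above $p$, and $|S|\le 2\delta$ gives... actually I just need \emph{some} point immediately above $p$ in $A$, which contradicts the gap). Hence $-e_2\in\INt\Tan(A,p)$: the tangent cone at $p$ opens downward. Symmetrically, at $q$ (the upper endpoint) the direction $e_2$ cannot be interior — else points of $S$ immediately below $q$ lie in $A$ — so $-e_2\notin$... wait, I need the analogous conclusion: at $q$, the direction $-e_2$ (pointing down, toward $p$, into the gap) must be the interior one, because if $e_2\in\INt\Tan(A,q)$ that is fine for the gap but if $-e_2\in\INt\Tan(A,q)$ then points just below $q$ are in $A$, contradiction. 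So $e_2\in\INt\Tan(A,q)$: the tangent cone at $q$ opens upward, away from $p$. Hmm — so far no contradiction; both cones point away from the gap, which is consistent. The real obstruction must be quantitative, via $\rho\eta>\delta$.

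So the main work, and the step I expect to be the genuine obstacle, is the quantitative argument: I will show that the ``downward-opening at $p$, upward-opening at $q$'' configuration together with $|p-q|\le 2\delta<2\rho\eta$ contradicts the positive-reach estimate Proposition \ref{P_PR}(iv). The idea: pick a unit normal $v_p\in\Nor(A,p)$ with $\langle v_p,e_2\rangle\le-\eta$ (normal cone at $p$ is in the lower half-space) and apply (iv) with $b=q$, $a=p$: since $q-p=|q-p|e_2$ points straight up, $\langle q-p,v_p\rangle=|q-p|\langle e_2,v_p\rangle\le-\eta|q-p|<0$ — but (iv) only gives an upper bound $\le|q-p|^2/(2\rho)$, which is automatically satisfied by a negative left side, so that alone is not enough. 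Instead I apply (iv) the \emph{other way}: take a unit normal $v_q\in\Nor(A,q)$ with $\langle v_q,e_2\rangle\le -\eta$ as well (normal cone at $q$ is in the lower half-space, since $e_2$ interior to tangent means normal cone is in $\{\langle\cdot,e_2\rangle<0\}$); hmm, but then $\langle p-q,v_q\rangle=|q-p|\langle -e_2,v_q\rangle\ge\eta|q-p|>0$, and (iv) forces $\eta|q-p|\le\langle p-q,v_q\rangle\le|p-q|^2/(2\rho)=|q-p|^2/(2\rho)$, i.e. $\eta\le|q-p|/(2\rho)\le 2\delta/(2\rho)=\delta/\rho$, contradicting $\rho\eta>\delta$. \textbf{This is the crux.} (The symmetric application at $p$ with $b=$ the downward direction is not needed once we have a normal at $q$ pointing down and $p$ lying below $q$.) I would double-check the sign bookkeeping — which of $\pm e_2$ is interior to which tangent cone, hence which half-space contains which normal cone — since that is exactly where an error would creep in; the heuristic is that a point at the \emph{bottom} of a vertical gap has $A$ below it, so $-e_2$ is tangent-interior and the normal points up, while a point at the \emph{top} of the gap has $A$ above it, so $e_2$ is tangent-interior and the normal points down. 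Then $v_q$ points down, $p-q$ points down, their inner product is $\ge\eta|p-q|$, and Proposition \ref{P_PR}(iv) closes the argument against $\rho\eta>\delta$. Once the contradiction is reached, $A\cap S$ must be connected, proving the lemma.
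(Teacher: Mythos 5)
Your argument is correct and is essentially the paper's proof in mirror image: the paper takes the lower endpoint of a gap in $A\cap S$, shows via Lemma \ref{L_cones} and the duality between $\Nor$ and $\Tan$ that some unit normal there has $\langle v,e_2\rangle\geq\eta$, and applies Proposition \ref{P_PR}(iv) toward the upper endpoint to get $\eta\leq\delta/\rho$, while you run the same three ingredients symmetrically at the upper endpoint with a downward normal; your sign bookkeeping and the final contradiction with $\rho\eta>\delta$ are right, and the false first attempts in your cone-geometry paragraph (e.g.\ ``$\pm e_2\notin\Tan(A,x)$'', which fails when $\Nor(A,x)$ is a ray or trivial) are ones you yourself retract and never use. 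When writing it up, add the two one-line justifications the paper supplies implicitly or explicitly: $A\cap S$ is closed (a consequence of $\reach(A,x)\geq\rho$ on $A\cap S$), which is what guarantees that the gap's endpoints $p,q$ actually lie in $A$, and $\Nor(A,q)\neq\{0\}$ because $q\in\partial A$ together with Proposition \ref{P_PR}(v), so the unit normal $v_q$ you ``take'' really exists.
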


\begin{proof}
First, note that the assumption $\reach (A,x)\geq\rho$, $x\in A\cap S$, implies that $A\cap S$ is closed.
Assume, for the contrary, that $A\cap S$ is not connected, i.e., there exist two points $x=(x_1,x_2)$, $y=(x_1,y_2)$ in $S\cap A$ with $x_2<y_2$ and such that the open segment $(x,y)$ does not intersect $A$. Then, $x\in\partial A$ and we claim that there exists a vector $v\in\Nor(A,x)$ with $\langle v,e_2\rangle>0$. Indeed, if not, \eqref{L1E} would imply that $e_2$ lies in the interior of $\Tan(A,x)$, and Lemma~\ref{L_cones} would imply that $x+\tau e_2\in A$ for sufficiently small $\tau>0$, which would contradict our assumption. So, let $v=(v_1,v_2)\in \Nor(A,x)$ be a unit vector with $v_2>0$. \eqref{L1E} implies that $v_2\geq\eta$ and Proposition~\ref{P_PR}~(iv) yields $\langle y-x,v\rangle\leq|y-x|^2/(2\rho)$, hence, $v_2\leq 2\delta/(2\rho)$. Putting these estimates of $v_2$  together, we obtain $\eta\leq\delta/\rho$, which contradicts our assumption and completes the proof.
\end{proof}

\begin{lemma}  \label{L2_pom}
Let $\varphi:I\to\R$ be a function defined on an interval $I\subset \R$ and  $A\supset\graph\varphi$.  
 Let
$\delta,\rho>0$ and $0<\eta<1$ be such that
$2\delta<\rho \eta$, $\diam(\graph\varphi)\leq 2\delta$ and
for any $x\in\graph\varphi$ we have $\reach(A,x)>\rho$ and
\begin{equation} \label{L2E}
\exists v\in\Nor(A,x)\cap S_{\R^2}:\, \langle v,e_2\rangle\geq\eta .
\end{equation}
Then $\varphi$ is Lipschitz.
\end{lemma}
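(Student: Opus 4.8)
The plan is to prove directly that $\varphi$ is $(3/\eta)$-Lipschitz, exploiting the curvature-type inequality of Proposition~\ref{P_PR}~(iv). Fix arbitrary $s,t\in I$ and put $x:=(s,\varphi(s))$, $y:=(t,\varphi(t))$; both points lie in $\graph\varphi\subseteq A$. Write $h:=t-s$ and $k:=\varphi(t)-\varphi(s)$, so that $y-x=(h,k)$, $|y-x|\leq |h|+|k|$, and, since $|y-x|\leq\diam(\graph\varphi)\leq 2\delta$, also $|y-x|^2\leq 2\delta\,|y-x|\leq 2\delta(|h|+|k|)$. The task is thus to bound $|k|$ by a constant multiple of $|h|$.

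I distinguish the sign of $k$. If $k\geq 0$, choose by hypothesis \eqref{L2E} a unit vector $v=(v_1,v_2)\in\Nor(A,x)$ with $v_2\geq\eta$. Proposition~\ref{P_PR}~(iv), together with $\reach(A,x)>\rho$, gives
$$ hv_1+kv_2=\langle y-x,v\rangle\leq\frac{|y-x|^2}{2\rho}\leq\frac{\delta}{\rho}(|h|+|k|). $$
Since $|v_1|\leq|v|=1$ and $kv_2\geq k\eta$, this yields $k\eta\leq\left(1+\tfrac{\delta}{\rho}\right)|h|+\tfrac{\delta}{\rho}|k|$. If $k<0$, I argue symmetrically: choose a unit $w=(w_1,w_2)\in\Nor(A,y)$ with $w_2\geq\eta$ and apply Proposition~\ref{P_PR}~(iv) \emph{at the point $y$} with $b=x$, obtaining $\langle x-y,w\rangle\leq\frac{\delta}{\rho}(|h|+|k|)$, i.e.\ $|k|\eta\leq\left(1+\tfrac{\delta}{\rho}\right)|h|+\tfrac{\delta}{\rho}|k|$ again.

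Finally I use the standing hypothesis $2\delta<\rho\eta$, which says $\delta/\rho<\eta/2$; hence $\eta-\delta/\rho>\eta/2>0$, and (using $\eta<1$ as well) $1+\delta/\rho<3/2$. Rearranging the inequality from the previous step gives $|k|\left(\eta-\tfrac{\delta}{\rho}\right)\leq\left(1+\tfrac{\delta}{\rho}\right)|h|$, so $|\varphi(t)-\varphi(s)|=|k|<\frac{3}{\eta}\,|t-s|$. As $s,t\in I$ were arbitrary, $\varphi$ is $(3/\eta)$-Lipschitz. I do not expect any genuine obstacle here; the only point requiring a little care is that the inequality in Proposition~\ref{P_PR}~(iv) is one-sided, so one must apply it at $x$ or at $y$ according to the sign of $k$ in order to turn it into a two-sided estimate for $k$ — everything else is the elementary algebra indicated above.
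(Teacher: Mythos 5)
Your proof is correct and follows essentially the same route as the paper: both rest on Proposition~\ref{P_PR}(iv) applied with a unit normal whose vertical component is at least $\eta$, together with $\diam(\graph\varphi)\le 2\delta$ and $2\delta<\rho\eta$. The only difference is bookkeeping: you estimate the increment $|\varphi(t)-\varphi(s)|$ directly (choosing the normal at $x$ or at $y$ according to the sign of the increment) and obtain the explicit constant $3/\eta$, whereas the paper bounds the vertical component of the unit chord direction by a constant $\lambda<1$, the two-sidedness being handled by the symmetry in the two points.
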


\begin{proof}
Consider two different numbers $s_1,s_2\in I$ and denote
$$x_1=(s_1,\varphi(s_1)),\, x_2=(s_2,\varphi(s_2)),\ u=(u_1,u_2):=\frac{x_2-x_1}{|x_2-x_1|}.$$
To prove the Lipschitz property of $\varphi$, it is clearly sufficient to prove
 that $u_2\leq \lambda$
for some constant $\lambda <1$ (independent of $s_1,s_2$). 
If $v=(v_1,v_2)\in\Nor(A,x_1)$ is a unit vector from \eqref{L2E} then by Proposition~\ref{P_PR}(iv) and since $|x_2-x_1|\leq 2\delta$, we get
\begin{equation} \label{Eq03}
\langle u,v\rangle\leq \frac{|x_2-x_1|}{2\rho}\leq \frac{\delta}{\rho}.
\end{equation}
Observing that clearly
$u_2v_2\leq \langle u,v\rangle+|u_1v_1|$, and using $v_2\geq\eta$ and \eqref{Eq03}, we obtain
$$u_2\leq\frac{\delta}{\rho}\frac 1{\eta}+\frac{|u_1|}{\eta}\leq\frac 12+\frac{|u_1|}{\eta}.$$
If $|u_1|\leq\frac{\eta}{4}$ then $u_2\leq\frac 34$, and if not then $|u_2|=\sqrt{1-u_1^2}\leq\frac{\sqrt{16-\eta^2}}4<1$. So it is sufficient to put $\lambda:= \max\{\frac 34, \frac{\sqrt{16-\eta^2}}4\}$. 
\end{proof}

\begin{definition}  \rm   \label{D_types}
Let $M \subset \R^2$ and $r>0$. We say that
\begin{enumerate}
\item
$M$ is a {\it $\tilde T_r^1$-set} if there exists a Lipschitz semiconcave function $\vf$ on $(-r,r)$
 such that $\vf(0)=0$ and  $M= B(0, r) \cap \hypo \vf$.
\item
$M$ is a {\it $\tilde T_r^2$-set} if there exist  Lipschitz  functions $\psi \leq \vf$ on $(-r,r)$
 such that    $\vf$ is semiconcave, $\psi$ is semiconvex,  $\vf(0)=\psi(0) = 0$,
   $\vf'(0)=\psi'(0) = 0$ and      $M= B(0, r) \cap \hypo \vf \cap \epi \psi$.
\item
$M$ is a {\it $\tilde T_r^3$-set} if there exist  Lipschitz  functions $\psi \leq \vf$ on $[0,r)$
 such that    $\vf$ is semiconcave on $(0,r)$, $\psi$ is semiconvex on $(0,r)$,  $\vf(0)=\psi(0) = 0$,
   $\vf'_+(0)=\psi'_+(0) = 0$ and      $M= B(0, r) \cap \hypo \vf \cap \epi \psi$.
	\item
	$M$ is {\it of type $T^i$} ($i=1,2,3$) {\it at} $x\in M$, if there exists an isometry $G: \R^2 \to \R^2$
		 such that  $G(x)=0$ and $G(M \cap  B(x, r))$ is a $\tilde T_r^i$-set for some $r>0$.
\end{enumerate}
\end{definition}

\begin{theorem}  \label{T_PR-char}
Let $A\subset\R^2$ and $a\in A$ be given. Then $\reach(A,a)>0$ if and only if one of the following statements holds:
\begin{enumerate}
\item $a$ is an interior point of $A$,
\item $a$ is an isolated point of $A$,
\item $A$ is of type $T^i$ at $a$ for some $i\in\{1,2,3\}$.
\end{enumerate}
\end{theorem}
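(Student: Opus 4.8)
The plan is to prove both implications separately. For the \emph{only if} direction, assume $\reach(A,a)>0$ and that $a$ is neither an interior nor an isolated point of $A$; I then want to produce the isometry $G$ and show $G(A\cap B(a,r))$ is a $\tilde T_r^i$-set for a suitable $i$. First I would normalize: translate so $a=0$, and rotate so that the tangent cone $\Tan(A,0)$ is in a convenient position. The key structural input is the dimension of $\Tan(A,0)$. By Corollary~\ref{vnbt}, since $0\notin\intr A$ we have $\Tan(A,0)\neq\R^2$, and since $0$ is not isolated, $\Tan(A,0)\neq\{0\}$. A convex closed cone in $\R^2$ that is neither all of $\R^2$ nor $\{0\}$ is either (a) a closed halfplane, (b) a closed ``wedge'' (angular sector) with nonempty interior and angle $<\pi$, or (c) a line, or (d) a ray. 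I will treat these cases, matching them to $T^1,T^2,T^3$: a halfplane or a wedge of angle in $(0,\pi)$ will give type $T^1$ (the set lies locally under a semiconcave graph after choosing the rotation so the ``upward'' normal direction is $e_2$); a line will give type $T^2$ (the set is squeezed between a semiconvex and a semiconcave graph, both with zero derivative at $0$, using result (A)/semiconcavity of subgraphs together with a symmetric argument for the epigraph side); a ray will give type $T^3$ (a one-sided version).

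In each case the mechanism is the same: once the rotation is fixed so that the relevant normal directions cluster near $\pm e_2$, Lemma~\ref{L2_pom} shows that locally $A$ is contained in the graph region of a Lipschitz function (or between two such), and Lemma~\ref{L1_pom} shows the vertical slices $A\cap S$ are connected, so that $A$ near $0$ is exactly $\hypo\vf$ (resp.\ $\hypo\vf\cap\epi\psi$) for Lipschitz $\vf$ (resp.\ $\psi\le\vf$). Then I invoke result (A): a Lipschitz function whose closed subgraph has positive reach is semiconcave; applied to $\vf$ this gives semiconcavity of $\vf$, and applied to $-\psi$ (whose subgraph is, up to reflection, the ``cap'' of $A$) gives semiconvexity of $\psi$. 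The normalization $\vf(0)=\psi(0)=0$ is automatic from $a=0$; the derivative conditions $\vf'(0)=\psi'(0)=0$ (or the one-sided versions in the $T^3$ case) follow because in the line/ray cases the tangent cone is exactly that line/ray, which forces the graphs to be tangent to $W=\spa\{e_1\}$ at the origin, i.e.\ the one-sided derivatives vanish. For $T^1$ one must still check that $\vf(0)=0$ is the only normalization needed and that $M=B(0,r)\cap\hypo\vf$ for some $r$ (shrinking $r$ so that the graph stays in the ball), which is where the local-covering nature of reach (Proposition~\ref{P_PR}(i), so $\reach>0$ persists near $a$) and Lemma~\ref{L_souv} come in.

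For the \emph{if} direction I must show each of (1), (2), (3) implies $\reach(A,a)>0$. Cases (1) and (2) are immediate: an interior point has a whole ball in $\Unp A$ trivially if... well, more carefully, for (1) note $a\in\intr A$ gives a ball around $a$ contained in $A\subset\Unp A$; for (2) an isolated point is handled by taking $r$ small enough that $B(a,r)\cap A=\{a\}$, and then every point of $B(a,r/3)$ has unique nearest point. For (3), by the isometry invariance of reach it suffices to show each $\tilde T_r^i$-set has positive reach at $0$. Here I use result (A) in the converse direction: a semiconcave Lipschitz $\vf$ has $\hypo\vf$ of positive reach, so $\hypo\vf$ has positive reach, and locally near $0$ the $\tilde T_r^1$-set agrees with $\hypo\vf\cap B(0,r)$, whence Lemma~\ref{L_souv}(i) (intersection with a ball of small radius) keeps reach positive. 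For $\tilde T_r^2$ I need that the intersection $\hypo\vf\cap\epi\psi$ of two positive-reach sets still has positive reach at $0$; this is the delicate point — intersections do not preserve positive reach in general — but it works here because the conditions $\vf'(0)=\psi'(0)=0$, $\psi\le\vf$ make the two boundary curves tangent to the same line $W$ at $0$ with $\hypo\vf$ ``opening upward'' and $\epi\psi$ ``opening downward,'' so their normal cones at nearby boundary points point in nearly opposite vertical directions and cannot conspire to create a two-point metric projection. I expect this transversality/tangency estimate for $T^2$ (and its one-sided analogue for $T^3$, where Lemma~\ref{prilep} about attaching a segment along a one-dimensional tangent direction is the relevant tool) to be the main obstacle; the rest is bookkeeping with the planar geometry of convex cones and the two preparatory lemmas.
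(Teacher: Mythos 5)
Your skeleton (case analysis on $\Tan(A,a)$ for the ``only if'' half, case-by-case verification of positive reach for the ``if'' half) matches the paper's architecture, but both halves have a genuine gap at exactly the crucial step. In the ``only if'' direction you propose to get semiconcavity of $\vf$ and semiconvexity of $\psi$ by ``invoking (A)''. That requires knowing that $\hypo\vf$ (resp.\ $\epi\psi$) has positive reach, and this does \emph{not} follow from $\reach(A,a)>0$: in the $T^2$ and $T^3$ cases $A$ coincides locally with $\hypo\vf\cap\epi\psi$, which near every point where $\vf=\psi$ (in particular at $a$ itself) is strictly smaller than $\hypo\vf$, and positive reach is not inherited by the larger set. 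The paper bridges precisely this gap: at each graph point $x=(s,\vf(s))$ it produces a unit normal $v\in\Nor(A,x)$ with $\langle v,e_2\rangle\geq\eta$ (via Lemma~\ref{L_cones}, by contradiction with the definition of $\vf$ as the supremum over vertical slices), gets an exterior ball $B(x+rv,r)$ disjoint from $A$ by Proposition~\ref{P_PR}~(vi), and then a connectedness/component argument shows this ball also misses $\hypo\vf$; only then does a \emph{local} exterior-ball criterion (\cite[Theorem~2.6]{Fu85}, not the global statement (A)) give semiconcavity of $\vf$ on a smaller interval. Even in the $T^1$ case, where $A$ does agree with $\hypo\vf$ near $a$, statement (A) is global while you only control $A$ near $a$, so the local version is needed anyway. (Also, Lemma~\ref{L2_pom} does not show that $A$ is contained in a graph region; it gives Lipschitzness of a function whose graph lies in $A$ — the containment comes from defining $\vf,\psi$ as $\sup/\inf$ over slices and using Lemma~\ref{L1_pom}.)

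In the ``if'' direction, for $T^2$ you correctly flag the crux but leave it as a heuristic about normal cones ``not conspiring''; the paper's argument is different and elementary: after extending $\vf,\psi$ to $\R$ (Lipschitz, semiconcave/semiconvex) and taking $\rho\leq\min\{r/2,\reach(\hypo\vf,0),\reach(\epi\psi,0)\}$, every $x\in B(0,\rho)$ falls into one of three cases — $x_2>\vf(x_1)$, where $\Pi_A(x)=\Pi_{\hypo\vf}(x)$ is a singleton; $x_2<\psi(x_1)$, symmetric with $\epi\psi$; or $\psi(x_1)\leq x_2\leq\vf(x_1)$, where $x\in A$ and $\Pi_A(x)=x$ — so there is no interaction between the two graphs to estimate at all. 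More seriously, for $T^3$ the tool you name, Lemma~\ref{prilep}, points the wrong way: its hypothesis is that the set already has positive reach, which is exactly what you are trying to prove; in the paper it is used only in the \emph{converse} direction, to reduce the ray case of the tangent cone to the line case. For the ``if'' half of $T^3$ the paper instead replaces $\vf,\psi$ by $\tilde\vf=\min\{x,\vf(x)\}$, $\tilde\psi=\max\{-x,\psi(x)\}$ (still semiconcave/semiconvex and equal to $\vf,\psi$ on $(0,r)$), so that $\hypo\tilde\vf\cap\epi\tilde\psi$ is empty over $x_1<0$, shows that points of the dual cone project to $0$, and then runs the same trichotomy. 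Without an argument replacing these two steps, the ``if'' half is not established.
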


\begin{proof}
Clearly, $\reach (A,a)>0$ if $a$ is an interior or isolated point of $A$. We shall show that the same is true under (3). 

Assume that $A$ is of type $T^i$ at $a$ ($i=1,2,3$). We can assume without loss of generality that $a=0$ and that $A\cap B(0,r)$ is a $\tilde{T}^i_r$-set for some $r>0$. Consider first the case $i=1$ and let $\varphi$ be the Lipschitz semiconcave function from Definition~\ref{D_types}. By using \cite[Proposition~1.7]{Fu85}, we can consider $\varphi$ to be defined, Lipschitz and semiconcave on the whole $\R$, and \cite[Theorem~2.3]{Fu85} implies that $\reach(\hypo\varphi)>0$ (cf.\ (A) in Introduction). Since $A$ coincides with $\hypo\varphi$ on a neighbourhood of $0$, we infer that $\reach (A,0)>0$. 

Let now $i=2$ and let $\varphi$ and $\psi$ be as in Definition~\ref{D_types}. Again, we can assume $\varphi,\psi$ to be defined on $\R$, Lipschitz and semiconcave, semiconvex, respectively ($\psi\leq\varphi$ on $(-r,r)$). Set $\rho:=\min\{\frac r2,\reach(\hypo\varphi,0),\reach(\epi\psi,0)\}$ and take a point $x=(x_1,x_2)\in B(0,\rho)$. We distinguish three cases: If $x_2>\varphi(x_1)$ then $\Pi_A(x)=\Pi_{\hypo\varphi}(x)$, if $x_2<\psi(x_1)$ then $\Pi_A(x)=\Pi_{\epi\psi}(x)$, and if $\psi(x_1)\leq x_2\leq\varphi(x_1)$ then $\Pi_A(x)=x$. In all these cases, the metric projection to $A$ is single-valued at $x$ and, hence, $\reach(A,0)\geq\rho$.

Assume now $i=3$, let $\varphi$ and $\psi$ be as in Definition~\ref{D_types} and, again, assume that $\varphi,\psi$ are defined on $\R$. 
Decreasing $r>0$ if necessary, we can assume that $A\cap B(0,r)$ is contained in the cone $\{x:\, \langle x,e_1\rangle\geq\frac{\sqrt{3}}{2}|x|\}$. If $x\in B(0,\frac r2)$ lies in the dual cone, $\{x:\, \langle x,e_1\rangle\leq -\frac 12|x|\}$, then, clearly, $\Pi_A(x)=0$. Consider the functions 
$$\tilde{\varphi}(x):=\min\{x,\varphi(x)\},\quad \tilde{\psi}(x):=\max\{-x,\psi(x)\}.$$ 
The function $\tilde{\varphi}$ ($\tilde{\psi}$) is clearly semiconcave (semiconvex) and coincides with $\varphi$ ($\psi$, respectively) on $(0,r)$ (see, e.g.,
 \cite[Proposition 2.1.5]{CS}). Set 
$$\rho:=\min\{\frac r2,\reach(\hypo\tilde{\varphi}), \reach(\hypo\tilde{\psi})\}$$
 and consider a point $x=(x_1,x_2)\in B(0,\rho)$ such that $x_1=\langle x,e_1\rangle\geq -\frac 12|x|$. 
Again, we distinguish three cases.
If $x_2\geq \tilde{\varphi}(x_1)$ then $\Pi_A(x)=\Pi_{\hypo\tilde{\varphi}}(x)$ is a singleton. If $x_2\leq \tilde{\psi}(x_1)$ then $\Pi_A(x)=\Pi_{\epi\tilde{\psi}}(x)$ is again a singleton. 
If $\tilde{\psi}(x_1) < x_2 < \tilde{\varphi}(x_1)$, then clearly $x_1>0$ and therefore $\Pi_A(x)=x$.
Thus, $\reach(A,0)\geq\rho$.

We shall show the other implication. Assume that $a\in A$ and 
$$r_0:=\min\{\reach(A,a),1\}>0,$$ 
and let $a$ be neither an interior, nor an isolated point of $A$. Then, $\Tan(A,a)$ is a convex cone that neither reduces to $\{0\}$, nor equals the whole $\R^2$ (since then, by Corollary \ref{vnbt},
 $a$ would be an interior point of $A$).
We shall distinguish three cases.

(i) Let $\Tan(A,a)$ be two-dimensional, i.e., there exists a unit vector $v_0$ and an $\eta'\in (0,1]$ such that
$$u\in\Tan(A,a)\iff \langle u,v_0\rangle\leq -\sqrt{1-\eta'^2} |u|.$$
We can assume without loss of generality that $a=0$ and $v_0=e_2$. We have then
$\Nor(A,0)=\{v:\, \langle v,e_2\rangle\geq\eta'|v|\}$.
Using the definition of the tangent cone, Lemma~\ref{L_cones} and Proposition~\ref{P_PR}~(iii), (i), subsequently, fixing any $0<\eta<\eta'$, we can find a $0<\delta<r_0\eta/4$ such that
\begin{eqnarray}
&&A\cap B(0,\delta)\subset\{x:\, \langle x,e_2\rangle\leq \sqrt{1-\eta^2}|x|\}, \label{Eq0}\\
&&B(0,\delta)\cap\{x:\, \langle x,e_2\rangle\leq -\sqrt{1-\eta^2}|x|\}\subset A,  \label{Eq1}\\
&&\langle v,e_2\rangle\geq \eta |v|\text{ whenever }x\in A\cap \overline{B}(0,\delta)\text{ and }v\in\Nor(A,x),  \label{Eq2} \\
&&\reach(A,x)>\frac{r_0}2 \text{ whenever }x\in A\cap \overline{B}(0,\delta).  \label{Eq3}
\end{eqnarray}
We shall use the notation for vertical lines 
$$\ell(s):=\{ x\in\R^2:\, \langle x,e_1\rangle =s\},\quad s\in\R.$$
Lemma~\ref{L1_pom} (with $\rho=r_0/2$ and $S=\ell(s)\cap\overline{B}(0,\delta)$), \eqref{Eq2} and
\eqref{Eq3} yield that 
\begin{equation}\label{prjsou}
\text{$A\cap\ell(s)\cap B(0,\delta)$ is connected whenever $|s|<\delta$.}
\end{equation}
If $|s|<\eta\delta$, then $s \sqrt{1-\eta^2}/\eta < \sqrt{\delta^2-s^2}$ and an elementary computation shows that \eqref{Eq0} and \eqref{Eq1} imply
\begin{equation}\label{dveink}
\{s\} \times (- \sqrt{\delta^2-s^2},-s\eta^*] \subset
  A\cap\ell(s)\cap B(0,\delta) \subset \{s\} \times (- \sqrt{\delta^2-s^2}, 
	s\eta^*],
\end{equation}
where $\eta^*:=\sqrt{1-\eta^2}/\eta$.
So, fixing any $0< r <\eta\delta /4$, we obtain that the function
\begin{equation}  \label{def_phi}
\varphi(s):=\sup\{t\in\R:\, (s,t)\in A\cap\ell(s)\cap B(0,\delta)\},\, |s|< 4r,
\end{equation}
 is finite and, for each $s \in (-4r, 4r)$,
\begin{equation}\label{odhvf}
 - \sqrt{\delta^2-s^2} < - s\eta^* \leq \vf(s) \leq s\eta^* < \sqrt{\delta^2-s^2},
\end{equation}
which clearly implies that the graph of $\varphi$ is contained in $\partial A\cap B(0,\delta)$.
Consequently, due to Proposition~\ref{P_PR}~(v), \eqref{Eq2} and \eqref{Eq3}, we can apply Lemma~\ref{L2_pom} (with $\rho=r_0/2$ and $I=(-4r,4r)$) and get that $\varphi$ is Lipschitz. Set  $V_r:=\{(x_1,x_2):\, |x_1|<4r\}$. Clearly,
\eqref{prjsou}  and \eqref{dveink}
 imply that
\begin{equation}\label{stop}
A\cap B(0,\delta)\cap V_r=\hypo\varphi\cap B(0,\delta).
\end{equation}
Assume now that $|s|<r$ and  $x=(s,\varphi(s))$. We know that $x \in \partial A\cap B(0,\delta)$ and so
 there exists (see Proposition~\ref{P_PR}~(v)) a vector $v\in\Nor(A,x)\cap S_{\R^2}$. Proposition~\ref{P_PR}~(vi), \eqref{Eq3} and $r<r_0/2$  imply that $B(x+ rv, r)\cap A=\emptyset$.
 It is clear that  $B(x+rv,r)\subset  V_r$. Using $|s| < r< \delta \eta/4$ and \eqref{odhvf},
 we obtain $|\vf(s)| < \delta/4$ and consequently $|x|\leq |s| + |\vf(s)|< \delta/2$. Hence, since $r< \delta/4$,
 clearly  $B(x+ rv, r) \subset B(0,\delta)$.
Thus $B(x+ rv, r)\cap A=\emptyset$ and \eqref{stop} imply
$B(x+rv,r)\cap\hypo\varphi=B(x+rv,r)\cap A=\emptyset$
and so, by \cite[Theorem~2.6]{Fu85}, $\varphi$ is semiconcave on $(-r,r)$. Hence, using also that $\varphi(0)=0$, we get that $A\cap B(0,r)$ is a $\tilde{T}_r^1$-set and thus $A$ is of type $T^1$ at $0$.

(ii) Assume now that $\Tan(A,a)$ is a line; without loss of generality we assume that it is the $x_1$-axis and, again, that $a=0$. Hence, $\Nor(A,0)$ is the $x_2$-axis and, using the definition of the tangent cone and Proposition~\ref{P_PR}~(iii), we see that for any fixed $\eta\in(0,1)$ there exists a $0<\delta<r_0\eta/4$ such that \eqref{Eq3} holds and
\begin{eqnarray}
&&A\cap B(0,\delta)\subset\{x:\, |\langle x,e_2\rangle|\leq \sqrt{1-\eta^2}|x|\},  \label{Eq4} \\
&&|\langle v,e_2\rangle|\geq \eta |v|\text{ whenever }x\in A\cap \overline{B}(0,\delta)\text{ and }v\in\Nor(A,x).  \label{Eq5}
\end{eqnarray} 
Lemma~\ref{L_souv} and \eqref{Eq3} yield that $A\cap\overline{B}(0,\delta/2)$ is connected. This implies that also $\Pi_1(A\cap \overline{B}(0,\delta/2))$ is connected, where $\Pi_1$ denotes the orthogonal projection to the $x_1$-axis. Since both $e_1,-e_1$ are tangent vectors of $A$ at the origin, $\Pi_1(A\cap \overline{B}(0,\delta/2))$ must contain a neighbourhood of the origin in $\R$ and, so, we can choose an $0<r<\eta\delta/4$ such that 
$(-4r,4r)\subset\Pi_1(A\cap \overline{B}(0,\delta/2))\subset\Pi_1(A\cap B(0,\delta))$. Thus,
$A\cap\ell(s)\cap B(0,\delta)$ is nonempty if $|s|<4r$.

%Assume that this is not the case for some $|s|<\eta\delta /2$. Let $x$ be some closest point of $A\cap B(0,\delta)$ to the line $\ell(s)$ (such a point exists since $A$ is relatively closed in $B(0,\delta)$ and the distance from $\ell(s)$ to the origin is smaller than that to the boundary of $B(0,\delta)$). But then, there must be a normal vector to $A$ at $x$ perpendicular to $e_2$, which contradicts \eqref{Eq5}.

Using Lemma~\ref{L1_pom} (again with $\rho=r_0/2$ and $S=\ell(s)\cap\overline{B}(0,\delta)$),
\eqref{Eq3} and \eqref{Eq5}, we find that the intersection $A\cap\ell(s)\cap B(0,\delta)$ is connected for any $|s|< \delta$. We define the function $\varphi$ on $(-4r,4r)$ again by \eqref{def_phi}. Thus, using \eqref{Eq4}, we obtain, similarly as in the case (i), that
 \eqref{odhvf} holds again, and consequently 
 we obtain $\graph\varphi\subset\partial A\cap B(0,\delta)$ again.

We claim that at any point $x\in\graph\varphi$ there exists a unit vector $v\in\Nor(A,x)$ with  
$\langle v,e_2\rangle\geq \eta$. (Indeed, assume that this is not the case; then, due to \eqref{Eq5}, all normal vectors $u$ to $A$ at $x$ satisfy $\langle u,e_2\rangle\leq -\eta|u|$ and, hence, $e_2$ has to be in the interior of $\Tan(A,x)$. But then, using Lemma~\ref{L_cones}, we get that $[x,x+\ep e_2]\subset A$ for some small $\ep>0$, which contradicts the definition of $\varphi$.) Thus,  we may apply Lemma~\ref{L2_pom} (again with $\rho=r_0/2$ and $I=(-4r,4r)$) and \eqref{Eq3}, and  get the Lipschitz property of $\varphi$. 

We define also 
$$\psi(s):=\inf\{t\in\R:\, (s,t)\in A\cap\ell(s)\cap B(0,\delta)\},\, |s|<4r,$$
and proceed symmetrically. By the same reasoning, for each $x\in\graph\psi$ there exists a unit vector $v\in\Nor(A,x)$ with  
$\langle v,e_2\rangle\leq -\eta$, and, applying Lemma~\ref{L2_pom} for the set $A$ reflected by the $x_1$-axis, we obtain the Lipschitz property of $-\psi$. Clearly, $\psi\leq\varphi$, $\varphi(0)=\psi(0)=\varphi'(0)=\psi'(0)=0$ and
$$A\cap B(0,\delta)\cap V_r=\hypo\varphi\cap\epi\psi\cap B(0,\delta),$$ 
where $V_r:=\{(x_1,x_2):\, |x_1|<4r\}$.

Assume now that $|s|<r$ and  $x=(s,\varphi(s))$.
We know already that  there exists  $v\in \Nor(A,x)\cap S_{\R^2}$ with $\langle v,e_2\rangle\geq\eta$, hence,  since $r< r_0/2$, $B(x+rv,r)\cap A=\emptyset$ by Proposition~\ref{P_PR}~(vi).
 By the same argument as in the case (i) we obtain $B(x+rv,r)\subset B(0,\delta)\cap V_r$.
Since $(B(0,\delta)\cap V_r)\setminus\hypo\varphi$ is  clearly a component of $(B(0,\delta)\cap V_r)\setminus A$ and
 the ball $B(x+rv,r)\subset B(0,\delta)\cap V_r$ clearly intersects $(B(0,\delta)\cap V_r)\setminus\hypo\varphi$ since  $\langle v,e_2\rangle\geq \eta$,  we get $B(x+rv,r)\cap\hypo\varphi=\emptyset$. Thus we may apply \cite[Theorem~2.6]{Fu85} again and get that $\varphi$ is semiconcave on $(-r,r)$. By a symmetric argument one could verify the semiconvexity of $\psi$ on $(-r,r)$. Hence,
$A$ is of type $T^2$ at $a$. 

(iii) Finally, assume that $\Tan(A,a)$ is a ray. Applying a suitable isometry, we may assume that $a=0$ and $\Tan(A,0)=\{(s,0):\, s\geq 0\}$. 
Using Lemma~\ref{prilep}  we get that $\reach (A\cup[-\ep e_1,0],0)>0$ if $\ep>0$ is small enough. Clearly, $\Tan(A\cup[-\ep e_1,0],0)$ is the whole $x_1$ axis and we may apply the construction from (ii) and get Lipschitz functions $\psi\leq\varphi$ defined on an interval $(-r,r)$ such that $\varphi$ is semiconcave, $\psi$ semiconvex, $\varphi(0)=\psi(0)=\varphi'(0)=\psi'(0)=0$ and ($A\cup[-\ep e_1,0])\cap B(0,r)=(\hypo\varphi\cap\epi\psi)\cap B(0,r)$. Then, clearly, 
$A\cap B(0,r)=(\hypo\varphi|_{[0,r)}\cap\epi\psi|_{[0,r)})\cap B(0,r)$, thus, $A$ is of type $T^3$ at $a$.
\end{proof}
\begin{corollary}\label{kovro}
A compact set $\emptyset \neq A \subset \R^2$ has positive reach if and only if, for each 
 $a \in \partial A\setminus \is A$, $A$ is of type $T^i$ at $a$ for some    
 $i \in \{1,2,3\}$.  
\end{corollary}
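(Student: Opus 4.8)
The plan is to derive Corollary~\ref{kovro} directly from Theorem~\ref{T_PR-char} together with the compactness facts collected in Remark~\ref{LipPr} and Lemma~\ref{L_souv}. The ``only if'' direction is essentially immediate: if $A$ has positive reach, then for every $a\in A$ we have $\reach(A,a)>0$, so Theorem~\ref{T_PR-char} applies; for a point $a\in\partial A\setminus\is A$ the point is neither interior (it lies on the boundary) nor isolated (by assumption), so alternatives (1) and (2) of that theorem are excluded and we are left with (3), i.e.\ $A$ is of type $T^i$ at $a$ for some $i\in\{1,2,3\}$.

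For the ``if'' direction I would argue as follows. Assume $A$ is compact, nonempty, and of type $T^i$ at every $a\in\partial A\setminus\is A$. By the ``if'' part of Theorem~\ref{T_PR-char}, every such $a$ has $\reach(A,a)>0$. The isolated points of $A$ trivially have infinite reach locally (there is a ball around an isolated point meeting $A$ only at that point), and every interior point $a\in\INt A$ has $\reach(A,a)>0$ as well (a small ball around $a$ lies in $A$, hence in $\Unp A$). Thus $\reach(A,a)>0$ for \emph{every} $a\in A$. It remains to upgrade this pointwise positivity to $\reach(A)=\inf_{a\in A}\reach(A,a)>0$, and here compactness of $A$ is essential.

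The one genuine obstacle is precisely this last step: pointwise positive reach does not in general imply positive reach, but for compact sets it does, and I would justify it via a standard compactness/continuity argument. By Proposition~\ref{P_PR}~(i) the function $a\mapsto\reach(A,a)$ is either identically $\infty$ (in which case $A$ is convex and $\reach A=\infty>0$) or finite and $1$-Lipschitz on $A$. A positive continuous function on the compact set $A$ attains a positive minimum, so $\reach(A)=\min_{a\in A}\reach(A,a)>0$. (One should note that a $1$-Lipschitz, everywhere-positive function on a compact metric space is bounded below by a positive constant.) This completes the equivalence. I expect the write-up to be short; the only point requiring a word of care is the observation that interior and isolated points automatically have positive reach, so that the hypothesis on $\partial A\setminus\is A$ really does control all of $A$, and the invocation of Proposition~\ref{P_PR}~(i) plus compactness to pass from pointwise to uniform positivity.
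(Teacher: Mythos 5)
Your argument is correct and is essentially the derivation the paper intends: Corollary~\ref{kovro} is stated without proof as an immediate consequence of Theorem~\ref{T_PR-char}, and your observations that interior and isolated points have positive reach, together with Proposition~\ref{P_PR}~(i) and compactness to pass from pointwise to uniform positivity, are exactly the missing glue. (One cosmetic slip: an isolated point has positive, not necessarily infinite, reach --- but your parenthetical ball argument already yields the positivity that is actually needed.)
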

\begin{remark}\label{kontt}
If $A \subset \R^2$ is a compact set with positive reach, then 
$$ \{a \in A:\ A \ \text{is of type}\ T^3\ \text{at}\ a\}\ \ \ \text{is finite}.$$
 Indeed, Definition \ref{D_types} and Corollary \ref{kovro} show that each point
 $x \in A$ has a neighbourhood containing at most one point at which $A$ is of type $T^3$.
\end{remark}

\begin{remark}\label{turn}
Let $A \subset \R^d$ be a connected compact set with positive reach. Lytchak  \cite[Theorems 1.2, 1.3] {Ly05}  proved that
 every different points $a_1 \in A$,  $a_2 \in A$ can be joined in $A$ by a simple $C^{1,1}$ curve.
We remind (Remark~\ref{LipPr}) that any two boundary points $b_1 \in \partial A$, $b_2 \in \partial A$ which belong to the same component of $\partial A$ can be joined by a rectifiable curve in $\partial A$ (but clearly not necessarily by  a simple $C^{1,1}$ curve). Theorem \ref{T_PR-char} easily implies that in the case $d=2$ such points $b_1$, $b_2$ can be joined in $\partial A$ by a more regular curve, e.g.\ by a curve with finite turn. (For the definition and a theory of curves with finite turn see \cite{Du} and the references therein). 
%Indeed, Theorem \ref{T_PR-char} and ????? easily imply that for each $x \in \partial A$ there exists $r_x>0$ such that $x$ can be joined with any $y \in \partial A \cap B(x, r_x)$ in $\partial A$ by a curve with finite turn. So our statement easily follows by a standard argument.
We do not know whether the statement holds for $d\geq 3$.	
\end{remark}

\section{Smooth points of sets with positive reach}\label{smooth}

To prove that a mapping $\vf: W \to V$ is $C^{1,1}$ (with controlled Lipschitz constant of $\vf'$),
we will use the following special version of
``Converse Taylor theorem''.
\begin{proposition}\label{contay} (\cite{Jo}, \cite{HJ}).
Let $W$, $V$ be finite-dimensional Hilbert spaces, $U=B(a,r)$  a ball in $W$ and $\vf: U \to V$
  a mapping. Suppose that there exists $c>0$ and for each $x \in U$ a linear
 mapping $g^x: W \to V$ such that
\begin{equation}\label{tayl}
|\vf(y) - (\vf(x) + g^x(y-x))| \leq c |y-x|^2\ \ \text{whenever}\ \ x,y \in U.
\end{equation}
Then $\vf\in C^{1,1}(U)$ and $\vf': U \to \mathcal L(W,V)$ is Lipschitz with constant
 $m c$, where $m>0$ is an absolute constant.
\end{proposition}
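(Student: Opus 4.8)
The plan is to proceed in three steps: first identify $g^x$ with the Fr\'echet derivative of $\vf$ at $x$, then establish a \emph{local} $6c$-Lipschitz estimate for the map $x\mapsto g^x$, and finally globalise it using the convexity of the ball $U$.

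For the first step I would note that, since $W$ and $V$ are finite-dimensional, every linear map $g^x$ is automatically continuous, and \eqref{tayl} gives $|\vf(x+h)-\vf(x)-g^x(h)|\le c|h|^2=o(|h|)$ as $h\to 0$; hence $\vf$ is Fr\'echet differentiable at every $x\in U$ with $\vf'(x)=g^x$. It then remains only to show that $\vf'$ is Lipschitz with constant $mc$ for some absolute $m$.

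The heart of the argument is the local estimate. Fix $x_0\in U$ and $\rho>0$ with $B(x_0,\rho)\subset U$, and let $x_1,x_2\in B(x_0,\rho/3)$; put $\delta:=|x_1-x_2|$ and $L:=\vf'(x_1)-\vf'(x_2)$, and fix a unit vector $u\in W$. The key device is to introduce the auxiliary point $z:=x_2+\delta u$; the restriction $x_1,x_2\in B(x_0,\rho/3)$ guarantees $z\in B(x_0,\rho)\subset U$, since $|z-x_0|\le|z-x_2|+|x_2-x_0|<\delta+\rho/3<\rho$ (as $\delta<2\rho/3$). I would then write \eqref{tayl} for the three pairs $(x_1,z)$, $(x_2,z)$ and $(x_1,x_2)$ and add/subtract these identities so that every occurrence of $\vf$ cancels; what survives is $-\delta\,Lu=E$, where $E$ is a sum of the three error terms, of total norm at most $c\bigl(|z-x_1|^2+|z-x_2|^2+\delta^2\bigr)\le c(4\delta^2+\delta^2+\delta^2)=6c\delta^2$ (using $|z-x_2|=\delta$ and $|z-x_1|\le 2\delta$). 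Dividing by $\delta$ and taking the supremum over unit vectors $u$ gives $\|L\|\le 6c\delta$, i.e.\ $\vf'$ is $6c$-Lipschitz on $B(x_0,\rho/3)$.

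Finally, $\vf'$ is now Lipschitz with the \emph{same} constant $6c$ in a neighbourhood of every point of $U$; since $U$ is convex, a routine chaining argument along the segment $[x_1,x_2]\subset U$ (subdividing it finely and summing the local estimates) upgrades this to $\|\vf'(x_1)-\vf'(x_2)\|\le 6c|x_1-x_2|$ for all $x_1,x_2\in U$. Hence $\vf\in C^{1,1}(U)$ with $\lip(\vf')\le 6c$, and the statement holds with $m=6$. I expect the only genuinely delicate point to be the choice of the auxiliary point $z$: on an arbitrary domain $z=x_2+\delta u$ need not lie in $U$ when $x_2$ is near $\partial U$ and $x_1,x_2$ are far apart, which is precisely why I would first work locally (where $z$ is under control) and only afterwards pass to the global estimate via convexity of the ball; an alternative would be to use $z=x_2-\delta u$ whenever the ``$+$'' choice exits $U$, but the localisation keeps the bookkeeping uniform.
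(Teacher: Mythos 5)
Your argument is correct, and it gives the stated conclusion with the explicit absolute constant $m=6$: the identification $g^x=\vf'(x)$ is immediate from \eqref{tayl}, the three-point cancellation $-\delta\,Lu=E_1-E_2-E_3$ with $|E_1|+|E_2|+|E_3|\leq 6c\delta^2$ is algebraically right (the terms involving $\vf$ do cancel and the linear terms collapse to $-(g^{x_1}-g^{x_2})(z-x_2)$), and the passage from the local estimate on $B(x_0,\rho/3)$ to the global one is legitimate because the segment $[x_1,x_2]$ is a compact subset of the open convex set $U$, so a uniform radius and a fine subdivision give the chaining. However, your route differs from the paper's: the paper does not prove the proposition from scratch at all, but invokes the quantitative converse Taylor theorem of H\'ajek and Johanis (\cite[Chap.~1, Corollary 126]{HJ}, with $k=1$ and modulus $\omega(t)=ct$), only noting that the relevant geometric constant $e_U$ equals $2$ because $U$ is a ball. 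What the citation buys is generality (arbitrary order $k$, general moduli, general convex domains with a controlled eccentricity constant, infinite-dimensional setting) at essentially zero cost in text; what your direct proof buys is self-containedness, an elementary argument using only the triangle inequality, and an explicit, domain-independent constant ($m=6$ works, in fact, on any open convex set, since your localization uses only openness and your chaining only convexity). Your closing remark about why the auxiliary point $z=x_2+\delta u$ forces the localization is exactly the right delicate point to flag; the only cosmetic caveat is that in the chaining step you should make explicit that the radius $\rho$ can be taken uniform along the segment (via $\dist([x_1,x_2],\partial U)>0$), which your compactness remark implicitly covers.
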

\begin{proof}
It is sufficient to use \cite[Chap. 1, Corollary 126]{HJ} (with $X:= V$, $Y:= W$, $k:=1$, $f:= \vf$ and
 $\omega(t):= ct$), observing that $e_U = 2$, since $U$ is a ball in $W$.
\end{proof}

\begin{proposition}\label{lipl}
Let $A \subset \R^d$ be a set with $\reach A > \rho >0$. Let $W \subset \R^d$ be a linear
 space of dimension $k$, where $1 \leq k \leq d-1$; denote $V:= W^{\perp}$. Let $K>0$,
 $U \subset W$ be an open ball in $W$ and $\vf:U \to V$ be a $K$-Lipschitz mapping such that
 $P:= \{w + \vf(w):\ w \in U\}$ is a relatively  open subset of $A$. Then $\vf$ is a $C^{1,1}$
 mapping and $\vf': U \to \mathcal L(W,V)$ is Lipschitz with constant $\mu (2+K)^3/\rho$,
 where $\mu$ is an absolute constant.
\end{proposition}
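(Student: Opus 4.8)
The goal is to verify the hypothesis of the Converse Taylor theorem (Proposition~\ref{contay}) for $\vf$, i.e.\ to produce for each $x \in U$ a linear map $g^x : W \to V$ with
$$ |\vf(y) - \vf(x) - g^x(y-x)| \leq c\,|y-x|^2 \qquad \text{whenever } x,y \in U, $$
with $c$ of the form (absolute constant)$\cdot(2+K)^3/\rho$; then Proposition~\ref{contay} gives $\vf \in C^{1,1}$ with the claimed Lipschitz bound on $\vf'$ (after absorbing the absolute constant $m$). The natural candidate for $g^x$ comes from the tangent space: by \eqref{tanvec} (together with the fact that $P$ is relatively open in $A$, so $P \subset A \setminus A^{(k-1)}$ once we check that points of $P$ cannot be $(k-1)$-singular — this follows since $\Tan(A, w+\vf(w)) \supset \{(\Delta, \vf'(\dots))\}$ is at least $k$-dimensional by the existence of difference quotients along $P$, hence exactly $k$-dimensional), the tangent cone $T_x := \Tan(A, x)$ at $x = w+\vf(w)$ is a $k$-dimensional vector space. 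Moreover, since $\vf$ is $K$-Lipschitz, $T_x$ is the graph of a linear map $g^x : W \to V$ with $|g^x| \leq K$: indeed every vector in $T_x$ is a limit of normalized difference quotients $(\Delta, \vf(w+\Delta)-\vf(w))/|\cdots|$, whose $V$-component is $\leq K$ times the $W$-component, and $T_x$ projects onto all of $W$ (it has dimension $k$ and lies in the $K$-Lipschitz graph cone).

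\textbf{Main estimate.} Fix $x = w + \vf(w)$ and $y = w' + \vf(w')$ in $P$, and set $\Delta := w' - w \in W$. By Proposition~\ref{fedtan}~(ii) applied with $t = \rho$,
$$ \dist(y - x,\ T_x) \leq \frac{|y-x|^2}{2\rho}. $$
Let $\pi$ denote the orthogonal projection onto $T_x^{\perp}$; then $\dist(y-x, T_x) = |\pi(y-x)|$, so $|\pi(y-x)| \leq |y-x|^2/(2\rho)$. Now $y - x = (\Delta,\ \vf(w')-\vf(w))$ while $g^x$-graph point over $\Delta$ is $(\Delta,\ g^x\Delta) \in T_x$; hence $y - x - (\Delta, g^x\Delta) = (0,\ \vf(w')-\vf(w) - g^x\Delta) \in V$. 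The quantity I must bound is precisely $|\vf(w') - \vf(w) - g^x\Delta|$. The trick is to relate this purely-$V$ vector to $\pi(y-x)$: since $(\Delta, g^x\Delta) \in T_x$, we have $\pi(y-x) = \pi\big( (0, \vf(w')-\vf(w)-g^x\Delta) \big)$, so it suffices to show that $\pi$ restricted to the subspace $\{0\}\times V$ is bounded below, i.e.\ that $\{0\}\times V$ makes a uniformly positive angle with $T_x$. This angle is controlled by $K$: if $(\,\eta,\, \zeta\,) \in T_x$ with $|(\eta,\zeta)|=1$ then $|\zeta| \leq K|\eta|$, so $|\eta| \geq (1+K^2)^{-1/2}$; a standard linear-algebra computation then gives that the projection of any $(0,v)$ onto $T_x^\perp$ has norm at least $(1+K^2)^{-1/2}|v|$, whence
$$ |\vf(w') - \vf(w) - g^x\Delta| \ \leq\ \sqrt{1+K^2}\ |\pi(y-x)| \ \leq\ \frac{\sqrt{1+K^2}}{2\rho}\,|y-x|^2. $$
Finally $|y-x|^2 = |\Delta|^2 + |\vf(w')-\vf(w)|^2 \leq (1+K^2)|\Delta|^2 = (1+K^2)|w'-w|^2$, so with $c := (1+K^2)^{3/2}/(2\rho) \leq (2+K)^3/(2\rho)$ the Converse Taylor hypothesis \eqref{tayl} holds on $U$ (re-expressed in the variable $w \in U \subset W$, which is the correct domain for applying Proposition~\ref{contay}). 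Applying Proposition~\ref{contay} yields $\vf \in C^{1,1}(U)$ with $\vf'$ Lipschitz of constant $m\,c \leq \mu(2+K)^3/\rho$ for an absolute constant $\mu$.

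\textbf{Expected obstacle.} The routine parts are the two Euclidean-norm comparisons and the angle/lower-bound lemma for the projection; the one genuinely substantive point is justifying that $T_x = \Tan(A,x)$ is a $k$-dimensional \emph{vector space} equal to the graph of a linear map — i.e.\ that $x = w+\vf(w)$ lies in $A \setminus A^{(k-1)}$ so that \eqref{tanvec} applies. This needs the observation that $P$, being a relatively open subset of $A$ that is a $k$-dimensional topological manifold, forces $\dim(A) \geq k$ and forces $\Tan(A,x)$ to contain the $k$-dimensional graph-tangent directions; combined with \eqref{dimt} ($\dim \Tan(A,x) \leq \dim A$) and \eqref{tanvec} one pins down $\dim(A) = k$ locally and $\Tan(A,x)$ a $k$-plane. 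One should state this carefully, perhaps noting that it suffices to argue locally near each $a \in P$ and that $\dim(A)$ might a priori exceed $k$ globally — but relative openness of $P$ in $A$ handles this since a neighborhood of $x$ in $A$ equals a neighborhood of $x$ in $P$, a $k$-manifold.
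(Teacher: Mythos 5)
Your overall architecture is the same as the paper's: verify the hypothesis \eqref{tayl} of the converse Taylor theorem (Proposition~\ref{contay}) using Federer's criterion (Proposition~\ref{fedtan}), and your quadratic estimate is correct as computed — the angle bound $\dist((0,v),T_x)\geq (1+K^2)^{-1/2}|v|$ for a graph of a linear map of norm $\leq K$ is right, and your constant $(1+K^2)^{3/2}/(2\rho)\leq (2+K)^3/(2\rho)$ is even slightly better than the paper's $(2+K)(1+K)^2/(2\rho)$. The difference lies in how $g^x$ is produced, and this is where your argument has a genuine gap. You want $T_x=\Tan(A,x)$ to be a $k$-dimensional \emph{vector space} at \emph{every} $x\in P$, and you justify this by invoking \eqref{tanvec}, claiming $P\subset A\setminus A^{(k-1)}$ because the tangent cone ``is at least $k$-dimensional by the existence of difference quotients along $P$, hence exactly $k$-dimensional.'' That inference does not establish $x\notin A^{(k-1)}$: membership in $A^{(k-1)}$ is governed by $\dim\Nor(A,x)$, and a convex tangent cone whose linear span is $k$-dimensional can still be, say, a $k$-dimensional half-space, in which case $\dim\Nor(A,x)=d-k+1$ and the point \emph{is} $(k-1)$-singular. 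So ``span at least $k$'' plus ``span at most $k$'' does not yield that the cone is a linear space, which is exactly what you need both to apply \eqref{tanvec} and to get the graph-of-a-linear-map description (your parenthetical ``it has dimension $k$ and lies in the $K$-Lipschitz graph cone, hence projects onto all of $W$'' also presupposes linearity: a half-space of span $k$ inside the Lipschitz cone projects only onto a half-space of $W$). Also note that the difference quotients you write involve $\vf'$, which does not exist at every point — that is the whole issue.

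The gap is fillable, in two ways. (a) Complete your route: localize via Lemma~\ref{L_souv}(i) so that $\dim=k$ and \eqref{dimt} gives $\dim\operatorname{span}\Tan(A,x)\leq k$; then use that $U$ is an \emph{open} ball, so for every unit $u\in W$ the points $w+tu+\vf(w+tu)\in P$ ($t\to 0_+$) produce a tangent vector whose $W$-component is a positive multiple of $u$, whence $\pi_W(\Tan(A,x))=W$; since $\pi_W$ is injective on the $k$-dimensional span (the span lies in the Lipschitz cone, which meets $V$ only in $0$), the cone must coincide with its span, i.e.\ it is a $k$-plane — and then \eqref{tanvec} is not even needed. (b) Do what the paper does and avoid the issue altogether: define $g^x:=\vf'(x)$ on the set $D$ of differentiability points (dense by Rademacher), where $\Tan(A,x)=\Tan(P,x)$ is trivially the graph of $\vf'(x)$, prove \eqref{tayl} there, and extend to all $x\in U$ by a limiting argument using density of $D$ and compactness of $\{g\in\mathcal L(W,V):|g|\leq K\}$. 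Route (b) is lighter, since it uses neither \eqref{tanvec} nor \eqref{dimt} nor any dimension-theoretic localization; route (a) buys you genuine tangent planes at every point of $P$, which is a slightly stronger intermediate statement, but as written your justification of it does not go through.
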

\begin{proof}
Let $D\subset U$ be the set of all points $w \in U$, for which there exists $\vf'(w)$.
It is well-known that $|\vf'(w)| \leq K$ for each $w \in D$. For $w \in D$, set $g^w:= \vf'(w)$.
Using the fact that $D$ is dense in $U$ (by Rademacher theorem) and compactness
 of $\{g\in \mathcal L(W,V):\ |g| \leq K\}$, we can easily to each $x \in U \setminus D$
 assign a linear mapping $g^x \in  \mathcal L(W,V)$ with $|g^x| \leq K$  and a sequence $(w^x_n) \subset D$
 such that $w^x_n \to x$ and $\vf'(w^x_n) = g^{w^x_n} \to g^x$. 
By Proposition \ref{contay} it is sufficent to verify that \eqref{tayl} holds
 with $c:= (2+K)^3/(2 \rho)$. To this end, consider arbitrary $x, y \in U$.

First consider {\it the case $x\in D$}. Set 
$$a:= x + \vf(x),\ \ \ L:= \{t+g^x(t):\ t\in W\}= \{t+\vf'(x)(t):\ t\in W\},\ \ \ M:=a +L.$$
Clearly, $M= \{s+ \vf(x)+ \vf'(x)(s-x):\ s \in W\}$.
It is a well-known and easy fact that $\Tan(P,a)= L$. Consequently, since $P$ is open in $A$,
 $\Tan(A,a)= L$. Set $z_1:= y + \vf(y)$. Using Proposition \ref{fedtan} (with $b:= z_1$, $t:= \rho$)
 we obtain
\begin{equation}\label{phve}
\dist(z_1, M)= \dist(z_1-a,L) \leq \frac{|z_1-a|^2}{2\rho}.
\end{equation}
Denote  $z_2:=y+ \vf(x)+ \vf'(x)(y-x) \in M$. 
Let $p \in M$ with $|p-z_1| = \dist(z_1, M)$ and  $w_p:=\pi_W(p)$.   Then
$$ |z_2-p| = |(y-w_p) + (g^x(y-w_p))| \leq (1 +K) |y-w_p|  \leq (1+K) |p-z_1|,$$
and consequently
\begin{equation}\label{fine}
 |z_2-z_1| \leq |z_2-p| + |p-z_1| \leq (2+K) |p-z_1|.
\end{equation}
The Lipschitzness of $\vf$ gives
\begin{equation}\label{thve}
|z_1-a| \leq |y-x| + |\vf(y)- \vf(x)| \leq  (1+K) |y-x|.
\end{equation}
Using  \eqref{fine}, \eqref{phve} and \eqref{thve}, we obtain
\begin{equation}\label{podt}
|\vf(y) - (\vf(x)+ g^x (y-x))| = |z_2-z_1| \leq (2+K)\cdot \frac{(1+K)^2 |y-x|^2}{2\rho} \leq c |y-x|^2,
\end{equation}
 and so \eqref{tayl} holds if $x \in D$.

In the {\it second case $x\in U\setminus D$}, we observe that by \eqref{podt}
\begin{equation}\label{coja}
|\vf(y) - (\vf(w^x_n)+ g^{w^x_n}(y-w^x_n))| \leq  c |y-w^x_n|^2,\ \ \ \text{for each}\ \ \ n.
\end{equation}
It is easy to see that $g^{w^x_n}(y-w^x_n) \to g^x(y-x)$ as $n\to \infty$, and so, passing to the
 limit in \eqref{coja}, we obtain the validity of \eqref{tayl} also in the second case.
\end{proof}
\begin{remark}\label{plb}
Proposition \ref{lipl} clearly implies (B) from Introduction.
\end{remark}

\begin{proposition}\label{Rjecjj}
  Let $A \subset \R^d$, $\reach A > \rho> 0$ and $d>k\geq 1$. 
	Suppose that $a \in A$, $\Tan(A,a)=:W$ is a $k$-dimensional vector space and
	$\dim(A \cap B(a,\delta))=k$ for some $\delta>0$. Denote $V:= W^{\perp}= \Nor(A,a)$.
	
	Then there exists a ball $U= B^W(c,r)$ in $W$ and a $C^{1,1}$ mapping $\vf: U \to V$
	 such that $a= c+\vf(c)$, $P:= \{w + \vf(w):\ w \in U\}$ is a relatively open subset of $A$ and
	$\vf'$ is  $(M/\rho)$-Lipschitz, where
	 $M$ is an absolute constant.
	 \end{proposition}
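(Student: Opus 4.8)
The plan is to show that near $a$ the set $A$ is the graph of a Lipschitz map over $W$ with an \emph{absolute} Lipschitz constant, defined on a ball and with graph relatively open in $A$; the $C^{1,1}$ conclusion together with the bound on the Lipschitz constant of $\vf'$ is then immediate from Proposition~\ref{lipl} applied \emph{to $A$ itself} (so that the final constant is controlled by $\rho$ and not by the auxiliary radii introduced below). We may assume $a=0$ and choose coordinates so that $W=\R^k\times\{0\}$, $V=\{0\}\times\R^{d-k}$, with $\pi_W,\pi_V$ the coordinate projections. First I would control the tangent cones near $0$: fixing $\delta'\in(0,\min\{\delta,\rho\})$ and $\tilde A:=A\cap\overline B(0,\delta')$, Lemma~\ref{L_souv}(i) gives $\reach\tilde A>0$; since $\tilde A$ agrees with $A$ near $0$ we have $\Tan(\tilde A,0)=W$, whence $\dim\tilde A\ge k$ by \eqref{dimt} and $\dim\tilde A\le k$ since $\tilde A\subset A\cap B(0,\delta)$, so $\dim\tilde A=k$; applying \eqref{dimt} once more, $\dim\Tan(A,b)=\dim\Tan(\tilde A,b)\le k$ for all $b\in A\cap B(0,\delta')$, and in particular such $b$ lie in $\partial A$ by Corollary~\ref{vnbt}. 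As $\Tan(A,0)=W$ forces $\Nor(A,0)=V$, upper semicontinuity of $x\mapsto\Nor(A,x)\cap S_{\R^d}$ (Proposition~\ref{P_PR}(iii)) provides, for any prescribed small $\eps\in(0,1)$, a radius $r$ with $0<r\le\min\{\delta',\rho/2\}$ such that $\Nor(A,b)\cap S_{\R^d}$ lies in the $\eps$-neighbourhood of $V\cap S_{\R^d}$ for all $b\in A\cap B(0,r)$; choosing a $k$-dimensional subspace $W_b\supset\Tan(A,b)$ we get $W_b^\perp\subset\Nor(A,b)$, so $W_b^\perp$ is $O(\eps)$-close to $V$ (both being $(d-k)$-dimensional), and hence $\Tan(A,b)\subset\{u:\ |\pi_V u|\le\theta|\pi_W u|\}$ with $\theta=\theta(\eps)$ as small as we wish.

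Next I would read off the graph structure. For $b_1,b_2\in A\cap B(0,r)$, Proposition~\ref{fedtan}(ii) gives $\dist(b_2-b_1,\Tan(A,b_1))\le|b_2-b_1|^2/(2\rho)\le\tfrac12|b_2-b_1|$ (as $|b_2-b_1|<2r\le\rho$), and combining this with the cone inclusion from the previous step, an elementary computation in the $2$-plane spanned by $\pi_W(b_2-b_1)$ and $\pi_V(b_2-b_1)$ yields $|\pi_V(b_2-b_1)|\le K|\pi_W(b_2-b_1)|$ for an absolute constant $K$ (one can take $K=2$ once $\eps$ has been fixed small enough). Consequently $\pi_W$ is injective on $A\cap B(0,r)$ and $A\cap B(0,r)=\{w+\vf(w):\ w\in\Omega\}$, where $\Omega:=\pi_W(A\cap B(0,r))$ and $\vf$ is $K$-Lipschitz with $\vf(0)=0$.

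It remains to see that $\Omega$ contains a ball about $0$, and this is the step I expect to be the main obstacle. For small $x'$ put $\xi:=(x',0)\in W$; since $0\in A$ we have $\dist(\xi,A)\le|x'|<\rho$, so $\Pi_A(\xi)$ is defined, $|\Pi_A(\xi)|\le 2|x'|$, and $h(x'):=\pi_W(\Pi_A(\xi))$ is continuous near $0$ with $h(0)=0$. Since $\xi-\Pi_A(\xi)\in\Nor(A,\Pi_A(\xi))$ (a standard consequence of Proposition~\ref{P_PR}(vi)) and this vector lies in the $\eps$-cone around $V$, we obtain $|h(x')-x'|=|\pi_W(\xi-\Pi_A(\xi))|\le\eps\,\dist(\xi,A)\le\eps|x'|$ with $\eps<1$; thus the straight-line homotopy from $h$ to the identity does not vanish on $\partial B(0,s)$ for $s$ small, so $\deg(h,B(0,s),0)=1$ and therefore $B(0,(1-\eps)s)\subset h(B(0,s))\subset\pi_W(A\cap B(0,r))=\Omega$, using that $\Pi_A(\xi)\in A\cap B(0,r)$ for $|x'|<s$ with $s$ small.

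Finally I would set $U:=B^W(0,(1-\eps)s)\subset\Omega$; then $P:=\{w+\vf(w):\ w\in U\}$ equals $(A\cap B(0,r))\cap\pi_W^{-1}(U)$, hence is relatively open in $A$, it contains $a=0=0+\vf(0)$, and $\vf|_U$ is $K$-Lipschitz. Since $\reach A>\rho$, Proposition~\ref{lipl} then yields $\vf\in C^{1,1}(U)$ with $\vf'$ Lipschitz of constant $\mu(2+K)^3/\rho=:M/\rho$, $M$ an absolute constant, and undoing the normalization (so that $c=\pi_W(a)$) completes the argument. To summarize, the genuine difficulty is the openness of $\Omega$, handled by the Brouwer-degree argument in the third paragraph; a secondary point — keeping $M$ proportional to $1/\rho$ rather than to $1/\delta'$ — is taken care of by proving the Lipschitz-graph statement for the original set $A$ and invoking Proposition~\ref{lipl} with the true reach bound.
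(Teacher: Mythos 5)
Your argument is correct, and it shares the overall skeleton of the paper's proof -- write $A$ near $a$ as the graph over $W$ of a Lipschitz map with an \emph{absolute} Lipschitz constant, check that this graph is relatively open in $A$, and then apply Proposition~\ref{lipl} to the original set $A$ so that the Lipschitz constant of $\vf'$ is $M/\rho$ with $M$ absolute -- but the two middle steps are carried out by genuinely different means. For the Lipschitz-graph property, the paper first proves that $\Nor(A,x)$ is an honest $(d-k)$-dimensional subspace for $x\in A$ near $a$ (via the closedness of $A^{(k-1)}$, Proposition~\ref{zlfed}, and Federer's statement \eqref{tanvec}), then symmetrizes the normal estimate \eqref{vzd1} using the isometry of \cite[Lemma 2]{HRW} and concludes via Proposition~\ref{P_PR}(iv), obtaining a $1$-Lipschitz graph; you instead use only the dimension bound \eqref{dimt} to enclose $\Tan(A,b)$ in a $k$-plane $W_b$ with $W_b^{\perp}\subset\Nor(A,b)$, deduce closeness of $W_b$ to $W$ (here you tacitly use that one-sided closeness of equal-dimensional subspaces is symmetric -- true and standard, but worth one explicit line), and get the graph property with $K=2$ from Federer's criterion, Proposition~\ref{fedtan}. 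For the openness of the projected image $\pi_W(A\cap B(0,r))$, the paper gives an elementary nearest-point argument (a distance-minimizing pair between a fiber $w+V$ missing $A$ and $A$ would yield a normal direction lying in $W$, contradicting \eqref{vzd1}), whereas you run a Brouwer-degree argument for $h=\pi_W\circ\Pi_A$ restricted to $W$, which you show is an $\eps$-perturbation of the identity. Your route buys independence from Proposition~\ref{zlfed}, \eqref{tanvec} and the isometry lemma of \cite{HRW}, at the mild cost of invoking degree theory; the paper's route is entirely elementary. Both versions feed a ball $U$ and a $K$-Lipschitz $\vf$ into Proposition~\ref{lipl} with the true reach bound $\rho$, so the constant comes out absolute either way (the paper gets $\mu\cdot 3^3/\rho$, you get $\mu\cdot 4^3/\rho$, which is immaterial).
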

\begin{proof}
Set  $\omega:= \min\{\rho, \delta/2\}$ and $A^*:= A  \cap  \overline{B}(a,\omega)$.
 Then  $A^*$ has positive reach by Lemma \ref{L_souv}. We have $\dim A^*=k$, since clearly
  $\dim A^* \leq k$ and, by  \eqref{dimt},  $\dim A^* \geq \dim \Tan(A^*,a) = \dim \Tan(A,a) = k$.
	Since $(A^*)^{(k-1)}$ is closed in $A^*$ by Proposition \ref{zlfed} and $a \notin (A^*)^{(k-1)}$, there exists
	$0<\delta_0 < \omega$ such that $B(a,\delta_0) \cap A^* = B(a,\delta_0) \cap (A^* \setminus (A^*)^{(k-1)})$. So  \eqref{tanvec} gives that, if
	  $x\in A \cap B(a,\delta_0)$, then $\Tan(A,x)$ is a $k$-dimensional vector space and so $\Nor(A,x)$ is an 
 $(d-k)$-dimensional vector space. We can (and will) suppose that $a=0$. Since $V= \Nor(A,0)$, by
 Proposition \ref{P_PR}(iii) we can clearly find $0<\delta_1<\delta_0$ such that
\begin{equation}\label{vzd1}
 \text{$\dist(v, V \cap S_{\R^d})< 1/4$ if $v\in \Nor(A,x)\cap S_{\R^d}$ and $x \in B(0,\delta_1)\cap A$.}
\end{equation}
By \cite[Lemma 2]{HRW}, for each $x \in B(0,\delta_1)$  there exists a linear isometry $L: \R^d \to \R^d$ such that $L(V) = \Nor(A,x)$ and $L(\Nor(A,x)) = V$. Observing also that
 $L(S_{\R^d}) = S_{\R^d}$, it is easy to see that \eqref{vzd1} implies
\begin{equation}\label{symvzd}
 \text{$\dist(n, \Nor(A,x)\cap S_{\R^d})< 1/4$ if $n \in V\cap S_{\R^d}$ and $x \in B(0,\delta_1)\cap A$.}
\end{equation} 
Further choose $0< \delta_2 < \delta_1$ such that
\begin{equation}\label{deldv}
 \delta_2 < \frac{\rho}{8}.
\end{equation}
Now consider two arbitrary points  $x_1$, $x_2$ in $A \cap B(0,\delta_2)$ and write
 $x_1= w_1 + v_1$, $x_2= w_2 + v_2$, where  $w_1, w_2 \in W$ and $v_1, v_2 \in V$.
 We will show that
\begin{equation}\label{jelip}
|v_1-v_2| \leq |w_1-w_2|.
\end{equation}
 So suppose, to the contrary, that $|v_1-v_2| > |w_1-w_2|$. Then clearly $|v_1-v_2| \geq (1/2) |x_2-x_1|$.
Applying \eqref{symvzd} for $n:= (v_2-v_1)/|v_2-v_1|$, we can choose $n_1 \in \Nor(x_1,A)
 \cap S_{\R^d}$  with  $|n-n_1| < 1/4$. Using Proposition \ref{P_PR}(iv) and \eqref{deldv},
 we obtain
\begin{equation}\label{prner}
\langle x_2-x_1, n_1 \rangle  \leq \frac{|x_2-x_1|^2}{2 \rho} \leq \frac{1}{8} |x_2-x_1|.
\end{equation}
On the other hand, using  $|n-n_1| < 1/4$ and $|v_1-v_2| \geq (1/2) |x_2-x_1|$, we obtain
\begin{multline*}
\langle x_2-x_1, n_1 \rangle = \langle x_2-x_1, n \rangle + \langle x_2-x_1, n_1-n \rangle  \\
   =|v_2-v_1| + \langle x_2-x_1, n_1-n \rangle  \geq \frac{1}{2} |x_2-x_1| - \frac{1}{4} |x_2-x_1|
	 = \frac{1}{4} |x_2-x_1|,
\end{multline*}
which contradicts  \eqref{prner}.
So \eqref{jelip} holds. Therefore there exists a set $D \subset W$ and a $1$-Lipschitz mapping $\psi:  D \to V$ such that 
$$ A \cap B(0, \delta_2) = \{w+ \psi(w):\ w \in D\}.$$
Now we will show that $W \cap B(0, \delta_2/8) \subset D$, i.e.,
\begin{equation}\label{eprj}
\forall w \in B(0,  \delta_2/8)\cap W\ \exists  v \in V:\ w+v \in A \cap B(0, \delta_2).
\end{equation}
To this end, fix an arbitrary $w \in B(0, \frac{1}{8} \delta_2)\cap W$, denote
 $S(w):= w + V$ and suppose, to the contrary, that  $S(w) \cap B(0, \delta_2) \cap A = \emptyset$.
 Then also  $S(w) \cap \overline{B}(0, \delta_2/2) \cap A = \emptyset$ and so we can find
 $c \in S(w)$ and $d \in \overline{B}(0, \delta_2/2) \cap A$ such that
$$ |d-c| = \dist(S(w), \overline{B}(0, \delta_2/2) \cap A)  >0.$$
Since  $0 \in A$ and $|w| < \delta_2/8$, we have  $|c-d| < \delta_2/8$. 
 Obviously  $\Pi_{S(w)}(d) =c$, and thus $d-c \in W$. Writting $d= w^* + v^*$, where
 $w^* \in W$ and $v^* \in V$, we obtain
$$ |w^*| \leq |w| + |w^*-w|= |w| + |d-c| < \frac{1}{8} \delta_2 + \frac{1}{8} \delta_2 = \frac{1}{4} \delta_2. $$
Since by \eqref{jelip}  $|v^*| \leq |w^*|$, we obtain that $|d| <  \delta_2/2$, i.e.
 $d \in B(0, \delta_2/2)$. Thus, for all sufficiently small $0<t < |d-c|$, 
 $$B(d + t (c-d), t |c-d|) \subset B(0, \delta_2/2),$$ 
which clearly implies
  $\Pi_{A}(d + t(c-d)) = d$ and so $n^*:= (c-d)/|c-d| \in \Nor(A,d)$ by Proposition \ref{P_PR}(vi).
	 Since we know that $n^* \in W:= V^{\perp}$, we clearly obtain a contradiction with
 \eqref{vzd1}.

Applying Proposition \ref{lipl} with  $U:= B(0, \delta_2/8)\cap W$, $\vf:= \psi\restriction_U$ and $K=1$, we easily obtain our assertion.
\end{proof}
Now we will prove our main theorem on general sets of positive reach in any dimension, which
 contains Federer's result \eqref{fecjj}.

\begin{theorem}\label{hlav}
Let $A \subset \R^d$ be a set of positive reach with $0<k:= \dim A < d$. Then
\begin{enumerate}
\item[(i)]  
$A$ can be locally covered by finitely many  DC surfaces of dimension $k$.
\item[(ii)]  
$R:= A\setminus A^{(k-1)}\neq \emptyset$ is a uniform $C^{1,1}$ manifold of dimension $k$ which is open in $A$ and 
$A\setminus R= A^{(k-1)}$  can be locally covered by finitely many  DC surfaces of dimension $k-1$.
\end{enumerate}
\end{theorem}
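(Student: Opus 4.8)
The plan is to treat the two assertions together, since the same geometric decomposition underlies both. Write $R := A \setminus A^{(k-1)}$. First I would establish that $R \neq \emptyset$ and that $R$ is open in $A$: by \eqref{tanvec} we have $A = A^{(k)} \neq A^{(k-1)}$, so $R \neq \emptyset$, and by Proposition \ref{zlfed}(i) the set $A^{(k-1)}$ is closed, so $R$ is relatively open. Next, for each $a \in R$ I would verify the hypotheses of Proposition \ref{Rjecjj}: by \eqref{tanvec} the tangent cone $\Tan(A,a) =: W$ is a $k$-dimensional vector space, and since $\dim A = k$ with $A$ of positive reach, \eqref{dime} together with $a \notin A^{(k-1)}$ and the fact that $R$ is open in $A$ forces $\dim(A \cap B(a,\delta)) = k$ for small $\delta$ (the dimension cannot drop below $\dim \Tan(A,a) = k$ by \eqref{dimt}, and cannot exceed $k$ since $\dim A = k$). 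Then Proposition \ref{Rjecjj} hands me, for each $a \in R$, a ball $U = B^W(c,r)$ and a $C^{1,1}$ map $\vf\colon U \to V = W^\perp$ with $a = c + \vf(c)$, $\{w+\vf(w): w\in U\}$ relatively open in $A$, and $\vf'$ $(M/\rho)$-Lipschitz with $M$ an absolute constant, $\reach A > \rho$. The uniformity in Definition \ref{plochy}(f) is then immediate with $K := \max\{$ the Lipschitz constant of $\vf$ (which is $1$ by the construction in Proposition \ref{Rjecjj}, where $K=1$ is used), $M/\rho\}$, independent of $a$. This proves (ii) except for the covering statement about $A^{(k-1)}$, which is precisely Proposition \ref{zlfed}(ii).

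For part (i), I would cover $A$ locally by combining the two pieces of the decomposition \eqref{rozkl}. Fix $a \in A$ and $r$ with $\reach A > r > 0$; I claim $A \cap B(a,r/2)$ is covered by finitely many $k$-dimensional DC surfaces. Since $A = A^{(k)}$ by \eqref{tanvec}, and since $A^{(k)} = \bigcup_{\ep>0} A^{(k)}_\ep$, I would first try to reduce to bounded $\ep$: on $B(a,r/2)$, Proposition \ref{colhug} makes $d_A$ semiconvex with a fixed constant, hence (as in the proof of Proposition \ref{poakep}) one gets a Lipschitz convex function $f$ on $B(a,r/2)$ with $\partial f(x) \supset \Nor(A,x)\cap \overline B(0,1) + (3/r)(x-a)$; the point is that $\Nor(A,x)$ always has dimension $\geq d-k$ on $A = A^{(k)}$, so $\partial f(x)$ contains a $(d-k)$-dimensional affine ball — but a priori only of arbitrarily small radius, so $\Sigma^{d-k}_\ep$ of Proposition \ref{konzlomy} does not directly apply. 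Here I expect the main obstacle. The resolution I would pursue: on the \emph{regular} part $R$, the local $C^{1,1}$ graphs from part (ii) are themselves $k$-dimensional DC surfaces (Remark \ref{uzpl}(ii)), and by the uniform Lipschitz bound $K$ each such graph over a ball of a fixed radius $r_0 = r_0(K,r)$ covers a definite-size neighbourhood; a compactness/covering-number argument (the graphs over $r_0$-balls tile $R \cap B(a,r/2)$, and only finitely many are needed because each contains a ball of radius comparable to $r_0$) yields finitely many DC surfaces covering $R \cap B(a,r/2)$. Then $A^{(k-1)} \cap B(a,r/2)$ is covered by finitely many $(k-1)$-dimensional DC surfaces by Proposition \ref{zlfed}(ii), and a $(k-1)$-dimensional DC surface is trivially contained in a $k$-dimensional one. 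Taking the union gives (i).

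The step I expect to be genuinely delicate is the finiteness of the covering of $R \cap B(a,r/2)$ by $C^{1,1}$ graphs: one must exploit the uniform Lipschitz constant $K$ from Proposition \ref{Rjecjj}/Definition \ref{plochy}(f) to get that each graph-neighbourhood has a controlled size from below, so that a volume or packing argument caps the number of pieces by a constant $N = N(d,k,r,\rho)$ (or, after rescaling, $N=N(d,k)$ as in Proposition \ref{zlfed}). An alternative, cleaner route that avoids the packing argument entirely would be to derive (i) directly from Proposition \ref{poakep} applied to a \emph{fixed} $\ep$: one would need the a priori lower bound that on $A = A^{(k)}$ with $\reach A > r$, the normal cone $\Nor(A,x) \cap \overline B(0,1)$ contains a $(d-k)$-dimensional ball of some radius $\ep_0 = \ep_0(d,k,r) > 0$ at \emph{every} $x \in A$ — this is plausible from Proposition \ref{P_PR}(iv), (vi) (the normal cone, being $(d-k)$-dimensional or larger and "pointing away" with quadratic control, cannot be arbitrarily thin relative to $\reach A$), and if true it makes (i) an immediate corollary of Proposition \ref{poakep}. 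I would attempt this second route first and fall back on the packing argument only if the uniform lower bound on $\ep_0$ proves elusive.
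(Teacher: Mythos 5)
Your part (ii) is essentially the paper's own argument (openness and nonemptiness of $R$ via \eqref{tanvec} and Proposition \ref{zlfed}(i), the chart and the uniform constant via Proposition \ref{Rjecjj}, the covering of $A^{(k-1)}$ via Proposition \ref{zlfed}(ii)), and it is fine. The genuine gap is in part (i). Your primary route --- covering $R\cap B(a,r/2)$ by finitely many of the $C^{1,1}$ graph patches from Proposition \ref{Rjecjj} via a packing argument --- cannot work. The uniform constant in Proposition \ref{Rjecjj} controls the Lipschitz constants of $\vf$ and $\vf'$, but gives no lower bound on the radius of the domain ball $U$: in its proof that radius depends on the distance from $a$ to the singular set and on the modulus of continuity of $x\mapsto\Nor(A,x)$ at $a$, and the patches necessarily shrink as $a$ approaches $A^{(k-1)}$, so ``each graph-neighbourhood has a controlled size from below'' is false. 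Worse, the conclusion you want from the packing argument is itself false in general: Example \ref{Ex2}(2) (cf.\ Remark \ref{kehla}(c)) is a $2$-dimensional set of positive reach in $\R^3$ whose regular part cannot be covered, even locally near an accumulation point of the singular set, by finitely many $C^{1,1}$ surfaces; since finitely many Rjecjj-patches would extend to finitely many $C^{1,1}$ surfaces, no compactness or volume argument can produce such a finite cover.

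The alternative route you mention (and say you would try first) is the correct one, but you leave its key step --- a uniform $\ep_0$ with $A\subset A^{(k)}_{\ep_0}$ --- as a plausibility claim, and the obstacle you anticipate (``a priori only of arbitrarily small radius'') disappears once the right reason is identified: by \eqref{dimt}, $\dim\Tan(A,a)\leq k$ for \emph{every} $a\in A$, so the linear span of the convex cone $\Tan(A,a)$ has dimension at most $k$, and its orthogonal complement --- a linear subspace of dimension at least $d-k$ --- lies in the dual cone $\Nor(A,a)$. Hence $\Nor(A,a)\cap B(0,1)$ contains a $(d-k)$-dimensional ball of radius $1$ centered at the origin, i.e.\ $A\subset A^{(k)}_{1}$, and (i) follows at once from Proposition \ref{poakep} with $\ep=1$; this is exactly the paper's proof, and the same duality argument already appears in the proof of Proposition \ref{zlfed}. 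Note this is strictly stronger than the fact you were relying on (that $\Nor(A,a)$ merely has dimension $\geq d-k$ as a cone, which would indeed only give small balls somewhere in the cone): the normal cone contains a whole subspace, which is why the radius is uniformly $1$. With this observation part (i) is a one-liner, and the packing machinery, the decomposition $A=R\cup A^{(k-1)}$, and the auxiliary claim that a $(k-1)$-dimensional DC surface is contained in a $k$-dimensional one all become unnecessary.
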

\begin{proof}
To prove (i), observe that for each $a\in A$ (by  \eqref{dimt}) $\dim \Tan(A,a) \leq k$ and therefore $\Nor(A,a)$ contains a vector space of dimension $d-k$. Consequently clearly
 $A \subset A_1^{(k)}$, and so (i) follows from Proposition \ref{poakep}.

The first part of (ii) follows from Proposition \ref{Rjecjj}, since $\Tan(A,a)$ is a $k$-dimen\-sio\-nal
 vector space for each $a\in  R:= A\setminus A^{(k-1)}$ by $\eqref{tanvec}$. The second part
 of (ii) was proved in Proposition \ref{zlfed}. 
\end{proof}

\begin{remark}\label{kehla}(To Theorem \ref{hlav}.)
\begin{enumerate}
\item [(a)]
If $A$ is compact, we can clearly omit (both in (i) and in (ii)) ``locally''.
\item[(b)]
Writing ``manifolds'' instead of ``surfaces'' we can  omit (both in (i) and in (ii)) ``locally''
 also in the case of a non-compact $A$. This follows rather easily from
  the facts that $N=N(d,k)$ in    Proposition \ref{zlfed}
 and $N=N(d,k, \ep)$ in Proposition \ref{poakep} (which is applied, in the proof of (i), with $\ep=1$).
\item[(c)]  
It can be shown that $R$ cannot be always locally covered by  finitely many  $C^{1,1}$ surfaces of dimension $k$ (see Example~\ref{Ex2}).
\item[(d)] If $j<k-1$, the set $A^{(j)}$ cannot be always covered by a {\it locally finite} system of DC surfaces of dimension $j$. As an example, consider a convex body $A\subset\R^2$ with $A^{(0)}\subset \partial A$ infinite.
\end{enumerate}
\end{remark}

Further we will consider also relatively open subsets $\emptyset \neq B \subset A$, for which
 $\dim B \neq \dim A$. We will need the following notation.

\begin{definition}\label{dksk}  \rm
Let $A \subset \R^d$ be a set of positive reach and let $0\leq k \leq d$. We denote 
\begin{enumerate}
\item[(a)]
by $D_k(A)$ the set of points  $a\in A$, such that $\dim(A\cap B(a,r)) = k$ for all sufficiently
 small $r>0$, and
\item[(b)]
by $S_k(A)$ the set of points $a\in A$, such that $A\cap B(a,r)$ is a $k$-dimensional
 $C^{1,1}$ manifold for some $r>0$.
\end{enumerate}
\end{definition}
%$T_k$.......the set of points $a\in A$, where $Tan(A,a)$ is a $k$-dimensional vector space.
\begin{remark}
\begin{enumerate}
\item[(i)]
It is easy to see that $A = \bigcup_{k=0}^{d} D_k(A)  =  \is A \cup \bigcup_{k=1}^{d} D_k(A)$.  
\item[(ii)]
Clearly each $S_k(A)$ is open in $A$.
\item[(iii)]
If $\dim A= k \geq 1$, then 
\begin{equation}\label{ssk}
R:= A^{(k)} \setminus A^{(k-1)} = S_k(A).
\end{equation}
Indeed, $R \subset S_k(A)$ follows from Theorem \ref{hlav} and $S_k(A) \subset A^{(k)} \setminus A^{(k-1)}$
 follows from the obvious fact that $\Tan(A,x)$ is a $k$-dimensional vector space for each $x\in S_k(A)$.
\end{enumerate}
\end{remark}
%Remark:  above we can write equivalently  ``$C^{1}$ manifold'' or ``Lipschitz manifold'' (it follows
% from ....) or even ``topological manifold'' (it follows from a result stated by Federer and
% proved by Lytchak).
\begin{proposition}\label{P_dksk}
Let $A \subset \R^d$ be a set of positive reach and let $1\leq k \leq d$. Then
%\begin{enumerate}
%\item [(i)]
$S_k(A)$ is a dense subset of $D_k(A)$.
%\item[(ii)]
%$D_k(A) \setminus S_k(A)$ can be locally covered by finitely many of DC surfaces of dimension $k-1$.
%\end{enumerate}
\end{proposition}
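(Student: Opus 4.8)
The plan is to show that every point $a \in D_k(A)$ is a limit of points in $S_k(A)$. Fix $a \in D_k(A)$ and $r_0 > 0$ small enough that $\dim(A \cap B(a,r)) = k$ for all $r \le r_0$; I may also assume $r_0 < \reach(A)$. The key idea is to pass to a localization where the earlier structure theory applies: set $A^* := A \cap \overline{B}(a,\omega)$ for a suitably small $\omega < r_0$. By Lemma~\ref{L_souv}(i), $A^*$ has positive reach, and using \eqref{dimt} together with $\Tan(A^*,a) = \Tan(A,a)$ one gets $\dim A^* = k$. Now apply Theorem~\ref{hlav} (or rather Proposition~\ref{Rjecjj} and Proposition~\ref{zlfed}) to $A^*$: the main regular part $R^* := A^* \setminus (A^*)^{(k-1)}$ is a (uniform) $C^{1,1}$ manifold of dimension $k$, open in $A^*$, and it is nonempty since $\dim A^* = k$.

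The next step is to transfer this to a neighbourhood of $a$ inside $A$ itself, not just inside $A^*$. For points $x$ in the interior $A \cap B(a,\omega)$ we have $A \cap B(x,\varepsilon) = A^* \cap B(x,\varepsilon)$ for small $\varepsilon$, so $x \in S_k(A)$ as soon as $x \in R^*$ and $x$ is an interior point (relative to $A$) of the chart; in particular $R^* \cap B(a,\omega)$ lands inside $S_k(A)$ up to shrinking radii. Thus it suffices to show that points of $R^*$ accumulate at $a$. Since $a \in D_k(A) = D_k(A^*)$ near $a$, and $R^* = A^* \setminus (A^*)^{(k-1)}$ with $(A^*)^{(k-1)}$ coverable by finitely many DC (hence $\mathcal H^{k-1}$-finite, in particular $\mathcal H^k$-null) surfaces of dimension $k-1$ by Proposition~\ref{zlfed}, while $\mathcal H^k(A^* \cap B(a,\omega)) > 0$ because $\dim A^* = \dim_H A^* = k$ by \eqref{dime}, we conclude $R^* \cap B(a,\omega) \ne \emptyset$ for every $\omega$; letting $\omega \to 0$ produces the desired sequence in $S_k(A)$ converging to $a$.

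The one point that needs care — and which I expect to be the main obstacle — is the claim that a point of $R^*$ which is also relatively interior to $A$ actually belongs to $S_k(A)$, i.e. that the $C^{1,1}$ chart for $A^*$ around such a point is genuinely a chart for $A$. This is where one uses that $A^* = A \cap \overline B(a,\omega)$ and that the chart point $x$ lies in the \emph{open} ball $B(a,\omega)$: then for sufficiently small $\varepsilon$ one has $B(x,\varepsilon) \subset B(a,\omega)$, so $A \cap B(x,\varepsilon) = A^* \cap B(x,\varepsilon)$, and the $C^{1,1}$ manifold structure of $R^*$ near $x$ is literally the structure of $A$ near $x$. So one must choose the accumulating points of $R^*$ to lie in the open ball, which is automatic since $R^*$ is open in $A^*$ and $a$ is an interior point; a point of $R^*$ on the sphere $\partial B(a,\omega)$ can be avoided by working with a slightly smaller radius. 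Finally, for the trivial ranges: if $a$ is isolated then $a \in S_k(A) = D_0(A)$ directly (the case $k$ forced to $0$ is handled by the convention in Definition~\ref{plochy}(e)), and if $k = d$ then $D_d(A)$ consists of points with a $d$-dimensional neighbourhood in $A$, whose interiors are dense in $D_d(A)$ and lie in $S_d(A)$.
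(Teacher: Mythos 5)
Your overall strategy is the paper's: localize to $A^*:=A\cap\overline B(a,\omega)$, which has positive reach by Lemma~\ref{L_souv}(i); note that the singular part $(A^*)^{(k-1)}$ is small while $A$ is $k$-dimensional near $a$; deduce that regular points of $A^*$ meet every ball $B(a,\omega)$; and transfer the $C^{1,1}$ chart from $A^*$ to $A$ at points of the \emph{open} ball (a point you spell out correctly, and which the paper only glosses). The problem is the step that makes the regular part meet $B(a,\omega)$: you claim $\mathcal H^k(A^*\cap B(a,\omega))>0$ ``because $\dim A^*=\dim_H A^*=k$ by \eqref{dime}''. That inference is invalid: a set of Hausdorff dimension exactly $k$ can have $\mathcal H^k$-measure zero, so \eqref{dime} gives no lower bound on $\mathcal H^k$. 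Since your comparison is measure-theoretic (the singular part is $\mathcal H^k$-null), this positivity is the load-bearing fact and, as written, it is unsupported. It is true, but it needs an extra input: either the classical Szpilrajn-type theorem that a separable metric space with $\mathcal H^k(X)=0$ has topological dimension at most $k-1$, or --- simpler, and what the paper actually does --- drop measure altogether and compare topological dimensions: $(A^*)^{(k-1)}\cap B(a,\omega/2)$ is contained in countably (even finitely) many closed DC surfaces of dimension $k-1$ (Proposition~\ref{sipr} or \ref{zlfed}), hence has topological dimension at most $k-1$, while $\dim(A^*\cap B(a,\omega/2))=k$, so the difference is nonempty.

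A second, minor slip: you derive $\dim A^*=k$ from \eqref{dimt} and $\Tan(A^*,a)=\Tan(A,a)$. That route needs $\dim\Tan(A,a)=k$, which is not guaranteed for $a\in D_k(A)$ (at a singular point of a $k$-dimensional set of positive reach the tangent cone can have dimension smaller than $k$, e.g.\ a ray). Fortunately the fact is immediate from your choice of radii: $A^*$ contains $A\cap B(a,\omega)$, of dimension $k$, and is contained in $A\cap B(a,r_0)$, also of dimension $k$. With these two repairs (the second cosmetic, the first a real but short fix), your argument coincides with the paper's proof; the remaining ingredients --- nonemptiness and openness of $A^*\setminus(A^*)^{(k-1)}$ in $A^*$ via Theorem~\ref{hlav} and \eqref{tanvec}, the chart transfer inside the open ball, and the separate treatment of $k=d$ via interior points --- are handled correctly.
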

\begin{proof}
Suppose $D_k(A) \neq \emptyset$ and choose arbitrary $d\in D_k(A)$ and $\delta >0$.
 Since $d\in D_k(A)$, we can choose $0<\omega < \delta$ such that $\omega < \reach(A)$
 and $\dim (A \cap B(a,r))= k$ for all $0< r \leq \omega$.
By  Lemma \ref{L_souv}, $A^*:= A \cap \overline B(a,\omega)$ has positive reach.
 As (e.g., by Proposition \ref{sipr}) $\dim((A^*)^{k-1} \cap B(a, \omega/2)) \leq k-1$ and
 $\dim (A^* \cap B(a,\omega/2))= k$, we have $(A^* \setminus (A^*)^{k-1}) \cap B(a, \omega/2) \neq
 \emptyset$. Since $A^* \setminus (A^*)^{k-1}$ is by Theorem \ref{hlav} a $k$-dimensional $C^{1,1}$ manifold
 open in $A^*$, we easily obtain  $S_k(A) \cap B(a,\omega/2) \neq \emptyset$ and (i) follows.
  \end{proof}
\begin{corollary}\label{huhl}
The set of all smooth points  $S:= S_0 \cup S_1\cup\dots \cup S_d$ is open and dense
in $A$.
\end{corollary}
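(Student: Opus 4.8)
The plan is to obtain both assertions as immediate consequences of facts already established for the families $\{S_k(A)\}$ and $\{D_k(A)\}$. For the openness part, I would only observe that each $S_k(A)$ is open in $A$ (the Remark following Definition~\ref{dksk}), so the finite union $S=\bigcup_{k=0}^{d}S_k(A)$ is open in $A$ as well; no further argument is needed.

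For density, the first step is to recall the decomposition $A=\bigcup_{k=0}^{d}D_k(A)$ from the Remark after Definition~\ref{dksk}, so that it suffices to show that every point $a\in D_k(A)$ can be approximated by points of $S$. If $k=0$, then $a$ is an isolated point of $A$, hence $A\cap B(a,r)=\{a\}$ for all small $r>0$; this is a $0$-dimensional $C^{1,1}$ manifold, so $a\in S_0(A)\subset S$ and there is nothing to prove. If $1\le k\le d$, Proposition~\ref{P_dksk} states exactly that $S_k(A)$ is dense in $D_k(A)$, so every ball $B(a,\delta)$ meets $S_k(A)\subset S$. Combining the two cases yields that $S$ is dense in $A$.

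Since both ingredients -- the openness of the sets $S_k(A)$ and Proposition~\ref{P_dksk} -- are already available, there is no genuine obstacle here; the corollary is a bookkeeping consequence. The only minor point worth making explicit is the inclusion $S_k(A)\subseteq D_k(A)$ (a $k$-dimensional $C^{1,1}$ manifold and all of its relatively open subsets have topological dimension $k$), which is what makes ``approximating points of $D_k(A)$ by points of $S_k(A)$'' the same as ``approximating points of $A$ by points of $S$''.
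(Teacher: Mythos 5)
Your argument is correct and is exactly the route the paper intends: the corollary is stated without proof as an immediate consequence of the openness of each $S_k(A)$ (Remark after Definition~\ref{dksk}), the decomposition $A=\bigcup_{k=0}^{d}D_k(A)$ with $D_0(A)=\operatorname{isol} A\subset S_0(A)$, and Proposition~\ref{P_dksk}. Nothing is missing; your extra remark on $S_k(A)\subseteq D_k(A)$ is harmless but not needed, since density of $S_k(A)$ in $D_k(A)$ already places every point of $D_k(A)$ in the closure of $S$.
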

However, the structure of the set $A\setminus S$, which is nowhere dense in $A$, can be very complicated and a satisfactory complete characterization even of the local structure of sets of positive reach
 in $\R^d$ for $d\geq 3$ seems to be a very difficult task.

In such spaces we are not able even answer the following natural question:

\begin{question}
Let $A \subset \R^d$ be a compact set with positive reach. Does there exists a decomposition
\begin{equation}\label{strat} A = Q_1 \cup \dots \cup Q_s,
\end{equation}
where $Q_i$, $i=1,\dots,s$, are pairwise disjoint and each $Q_i$ is a relatively open subset of a DC surface $P_i$ of dimension $0 \leq k_i \leq d$?
\end{question}

\begin{remark}
\begin{enumerate}
\item[(i)] Theorem \ref{T_PR-char}  implies that for $d=2$ the answer to Question is positive.
Indeed, first observe that if each of closed sets $A_1,\dots, A_k$ has a decomposition of type
 \eqref{strat}, then also their union $ A_1\cup\dots\cup A_k = A_1 \cup (A_2\setminus A_1) \cup \dots$
 has clearly such a decomposition.

Further, observe that Theorem \ref{T_PR-char}  easily implies that for each $x \in \partial A$ there exists
 an open neighbourhood $U_x$ of $x$ (e.g. a suitable open square) such that $\partial A \cup \overline{U_x}$ has a decomposition of type \eqref{strat}. Using compactness of $\partial A$
 and the first observation, we easily conclude the proof. 
\item[(ii)] If $d\geq 3$, we
 do not even know whether there always exists a weaker decomposition, in which $Q_i$ is supposed to
 be a relatively open subset of a DC manifold.
\item[(iii)] 
 If such a decomposition exists, it is not (clearly) uniquely determined. Moreover, already in the plane in some cases there is no ``canonical decomposition'' (see Example~\ref{Ex1}).
\end{enumerate}
\end{remark}

\begin{example}  \label{Ex1}
Let $\emptyset\neq K\subset\R$ be compact and denote $I:=\conv K$,
$$\varphi: x\mapsto (\dist (x,K))^2,\quad x\in \R,$$
and 
$$A_K:=\hypo\varphi \cap \epi(-\varphi) \cap (I \times \R).$$
The function $\varphi$ is semiconcave (with semiconcavity constant $2$), see \cite[Proposition 2.2.2]{CS}, and, hence, $\reach A_K>0$ by Corollary~\ref{kovro}. 

In what follows, we will identify $\R$ with the $x$-axis $\R \times \{0\}$.
The following properties can be easily   shown.
\begin{enumerate}
\item $A_K$ is topologically regular ($A_K=\operatorname{cl}(\INt A_K)$) if $K$ is totally disconnected, nevertheless, the boundary $\partial A_K$ fails to be a $1$-dimensional manifold at all points of $K$ (note that $K$ can even have positive one-dimensional measure).
\item Both $A_K$ and $\partial A_K$ are (arcwise) connected. However, if $K$ is infinite and totally disconnected, the interior of $A_K$ has infinitely many components and the boundary $\partial A_K$ is not locally contractible at accumulation points of $K$. 
\item $\partial A_K\setminus\partial I$ can be written as the union of two connected one-dimensional DC manifolds which can be chosen in an infinite number of ways. Moreover, there is no ``canonical'' way how to decompose $\partial A_K\setminus\partial I$ into two disjoint one-dimensional DC manifolds (we could choose, e.g., $M_1=(\partial A_K\setminus\partial I)\cap\{y\geq 0\}$ and $M_2=\partial A_K\cap\{y< 0\}$).
\end{enumerate}
\end{example}

\begin{example}  \label{Ex2}
Consider the last example with $K$ infinite and totally disconnected to be embedded into the $x,y$ plane in $\R^3$. Note that we can write
$$A_K=\operatorname{cl}\left(\bigcup_{i=1}^\infty A_{\{a_i,b_i\}}\right),$$
if $(a_i,b_i)$ are all the maximal open intervals in $I\setminus K$, $i=1,2,\ldots$, and that the intersection $A_{\{a_i,b_i\}}\cap A_{\{a_j,b_j\}}$ is equal to the intersection of the segments $[a_i,b_i]\cap [a_j,b_j]$ if $i\neq j$. Let $T_\theta$ denote the rotation in $\R^3$ around the $x$-axis by an angle $\theta\in [0,2\pi)$ in the positive sense. It is not difficult to show that, choosing any sequence $\Theta=(\theta_i)_{i=1}^\infty$, the set
$$A_K^\Theta:=\operatorname{cl}\left(\bigcup_{i=1}^\infty T_{\theta_i}(A_{\{a_i,b_i\}})\right)$$
has positive reach and $\dim A_K^\Theta=2$.
Note that the ``regular part'' of $A^\Theta_K$ (cf.\ Theorem~\ref{hlav}) is
$$R:=A^\Theta_K\setminus(A^\Theta_K)^{(1)}=
(A^\Theta_K)^{(2)}\setminus(A^\Theta_K)^{(1)}=\bigcup_{i=1}^\infty  T_{\theta_i}(\INt A_{\{a_i,b_i\}}).$$  
We observe the following.
\begin{enumerate}
\item $R$ is a uniformly $C^{1,1}$ $2$-dimensional manifold (cf.\ Theorem~\ref{hlav}). Nevertheless, choosing an appropriate sequence $\Theta$, the function 
$$x\mapsto\Tan(R,x)$$ 
is not globally Lipschitz on $R$. 
\item Let  $x$ be an accumulation point of $K$  and choose a sequence of angles $(\theta_i)$ 
so that, for any $\delta>0$, the set of those $\theta_i$ which correspond to intervals 
 $(a_i,b_i) \subset (x-\delta, x+ \delta)$ is dense in $[0,2 \pi)$. 
 Then it is easy to see that the regular part $R$ cannot be covered by finitely many $C^{1,1}$ hypersurfaces. (However, $R$, and even $A^\Theta_K$, can be covered by finitely many  DC
 hypersurfaces by Remark \ref{kehla} (a).)  
\end{enumerate}
\end{example}

\section{One-dimensional PR sets}\label{oned}

\begin{definition}[Intrinsic distance] \rm
For $A\subset\R^d$ and $x,y\in A$ we set
$$d_A(x,y):=\inf\{\length(\gamma);\, \gamma:[a,b]\to A \text{ continuous}, \gamma(a)=x,\gamma(b)=y\}$$
and call it {\it intrinsic distance} of $x,y$ in $A$. 
\end{definition}

\begin{remark}
Note that the intrinsic distance may take infinite values, hence, it is not a distance in the standard metric spaces setting. Nevertheless, it clearly has all other  properties of a metric (cf.\ \cite[Chapter~1]{BBY01}).
\end{remark}

\begin{definition} \label{Def_manif}   \rm
Let $k\in\{1,\dots,d\}$ be given.
A set $\emptyset \neq M\subset\R^d$ is a {\it $k$-dimensional $C^1$ submanifold with boundary} if for any $x\in M$ there exist a neighbourhood $U$ of $x$ and a $C^1$ diffeomorphism $\phi: M\cap U\to V$ of $M\cap U$ onto a relatively open subset $V$ of a closed halfspace in $\R^k$ (i.e., $\phi$ is a homeomorphism and both $\phi$ and $\phi^{-1}$ are restrictions of $C^1$ mappings defined on some open sets). The mapping $\phi$ is called a {\it local $C^1$ chart} of $M$. 
\end{definition}

\begin{definition}\label{cjjpar}  \rm
A {\it $C^{1,1}$ curve} (with parameter $L$) is the image of some $C^1$ arc-length parametrization $\gamma: I\to\R^d$ defined on a (nondegenerate) interval and such that the derivative $\gamma'$ is $L$-Lipschitz. We call then $\gamma$ a $C^{1,1}$ {\it arc-length parametrization} (with parameter $L$). A $C^{1,1}$ curve is 
\begin{enumerate}
\item {\it simple} if it has a $C^{1,1}$ arc-length parametrization which is a homeomorphism, 
\item {\it closed simple} if it has a $C^{1,1}$ arc-length parametrization $\gamma: [a,b]\to\R^d$ such that $\gamma(a)=\gamma(b)$, $\gamma'(a)=\gamma'(b)$ and $\gamma|_{[a,b)}$ is injective,
\item a {\it simple $C^{1,1}$ arc} if it is a compact and simple $C^{1,1}$ curve.
\end{enumerate}
A $C^1$ curve (simple $C^1$ curve, closed simple $C^1$ curve, simple $C^1$ arc) is defined as above, but without the Lipschitz property of the derivative.
\end{definition}
\begin{remark}\label{regces}
Let $I \subset \R$ be an interval, $\psi: I \to \R^d$ a $C^1$ homeomorphism,
 $\lambda:= \inf \{|\psi'(x)|: x \in I\} > 0$, and let $\psi'$ be $M$-Lipschitz.  Then $\psi(I)$
 is a simple $C^{1,1}$ curve with parameter $2M/\lambda^2$.

Indeed, a standard computation (see., e.g.,   \cite[Lemma~2.7]{DZ2}) reveals   
 that any arc-length reparametrization of $\psi$ has  $(2M/\lambda^2)$-Lipschitz derivative.
\end{remark}

\begin{remark}\label{47}
 Let $\gamma: I\to\R^d$ be an arbitrary (not necessary injective) arc-length parametrization
  $C^{1,1}$ with parameter $L>0$ and   $x=\gamma(s)$, $y=\gamma(t)$ ($s,t \in I$).
	The mean value theorem for vector valued functions implies (see \cite[(8.6.2)]{D})
$$|y-x-\gamma'(s)(t-s)|\leq L|t-s|^2.$$
It follows that if $|t-s|<1/(2L)$ then (note that $|\gamma'(s)|=1$)
\begin{equation} \label{E_cc}
|y-x|\geq |\gamma'(s)||t-s|-L|t-s|^2\geq \frac 12 |t-s|,
\end{equation}
hence,
\begin{equation}  \label{E_tan_c}
|y-x-\gamma'(s)(t-s)|\leq 4L|y-x|^2.
\end{equation}
	\end{remark}

\begin{lemma} \label{L_curve}
A connected one-dimensional $C^1$ submanifold of $\R^d$ with boundary is a simple or closed simple $C^1$ curve.
\end{lemma}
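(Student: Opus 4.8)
The statement to prove is Lemma~\ref{L_curve}: a connected one-dimensional $C^1$ submanifold $M$ of $\R^d$ with boundary is a simple or closed simple $C^1$ curve. The plan is to build a global arc-length parametrization by patching together the local charts, using connectedness to make sure the construction runs through the whole manifold, and a separate topological dichotomy at the end to decide between the ``simple'' (interval) and ``closed simple'' (circle) cases.

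First I would normalize the local structure. For each $x \in M$ pick a chart $\phi: M \cap U \to V \subset [0,\infty)$ as in Definition~\ref{Def_manif}; composing with the $C^1$ inverse gives a $C^1$ parametrization $\psi$ of a relatively open neighbourhood of $x$ in $M$, defined on a subinterval of $\R$, with $\psi' \neq 0$ (since $\phi^{-1}$ is the restriction of a $C^1$ immersion — injectivity of $\phi$ and the chain rule force $\psi'$ nonvanishing). Reparametrizing each such local piece by arc length (using that $|\psi'|>0$ locally, hence bounded away from $0$ on compact subintervals) yields, around every point, a $C^1$ arc-length parametrization of a relative neighbourhood, which is a homeomorphism onto its image. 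So $M$ is locally, around each point, a $C^1$ arc-length-parametrized arc (a half-open arc at boundary points of $M$).

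Next I would run a maximal-extension / continuation argument. Start from one local arc-length parametrization $\gamma_0: I_0 \to M$ and consider the family of all $C^1$ arc-length parametrizations $\gamma: I \to M$ (onto relatively open subsets of $M$, or relatively open-in-the-halfline at a boundary point) that extend $\gamma_0$ up to reparametrization by a translation of the parameter; order by extension. The key gluing fact is that two arc-length parametrizations of overlapping pieces of a $1$-manifold must agree up to a parameter translation and possibly an orientation flip on the overlap — this follows because on a connected overlap the transition map is $C^1$, has derivative of absolute value $1$ (both are arc length), hence is $t \mapsto \pm t + c$; after fixing the orientation consistently (possible since the overlap, being an open subset of an interval minus nothing problematic, is handled componentwise and $M$ is connected), the two parametrizations patch into a single $C^1$ arc-length parametrization on the union of the parameter intervals. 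A Zorn's lemma argument (or a direct transfinite/monotone union argument, since parameter intervals are nested subintervals of $\R$) produces a maximal one $\gamma: I \to M$. Connectedness of $M$ then forces $\gamma(I) = M$: $\gamma(I)$ is relatively open in $M$ by construction, and it is also relatively closed — if $\gamma(I)$ had a boundary point $p$ in $M$, a local chart at $p$ would let us extend $\gamma$ past an endpoint of $I$, contradicting maximality unless that endpoint already maps to a boundary point of $M$ (in which case $\gamma$ is already defined up to and including it, or the parameter interval is maximal on that side).

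Finally I would settle the dichotomy. The maximal $\gamma: I \to M$ is surjective and locally a homeomorphism. If $\gamma$ is injective it is the required simple $C^1$ curve (and $I$ is a possibly-unbounded interval, which is fine — Definition~\ref{cjjpar} allows that, and note we only need $C^1$ here, not the Lipschitz derivative). If $\gamma$ is not injective, then since it is locally injective, any coincidence $\gamma(a) = \gamma(b)$ with $a \neq b$ must happen at parameter endpoints; a short argument using local injectivity and the structure of $\R$ shows $I = [a,b]$ is a compact interval, $\gamma(a) = \gamma(b)$, $\gamma'(a) = \gamma'(b)$ (both the manifold and the arc-length condition force the two one-sided velocity vectors to agree, up to sign, and a sign flip would make $M$ locally non-manifold near that point), and $\gamma|_{[a,b)}$ is injective — i.e.\ a closed simple $C^1$ curve. \emph{The main obstacle} I anticipate is the bookkeeping in the continuation/gluing step: carefully verifying that orientations of the overlapping arc-length parametrizations can be made globally consistent (using connectedness of $M$, not of the parameter set) and that the extension genuinely cannot be continued past a maximal $I$ except by closing up into a circle or hitting a boundary point — this is where the ``with boundary'' hypothesis and the precise local model must be used most carefully.
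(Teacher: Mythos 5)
Your overall route --- rebuilding the classification of connected one-dimensional manifolds by maximal arc-length continuation --- is viable and is in substance the argument of Milnor's appendix, which the paper simply cites (adding the remark that his proof works in the $C^1$ setting with boundary) and then finishes with a routine arc-length reparametrization. Your local normalization step (nonvanishing derivative of the inverse chart via the chain rule, local arc-length reparametrization) and your observation that only $C^1$, not $C^{1,1}$, is needed are fine.

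However, the continuation step, which you yourself flag as the main obstacle, has a genuine gap as written. Your family consists of injective arc-length parametrizations onto relatively open subsets of $M$, yet your endgame detects the circle case through non-injectivity of the maximal element; these two things are incompatible. Concretely, if $M$ is a circle, the maximal member of your family is an arc-length homeomorphism of an open interval onto $M$ minus one point: it is injective, it cannot be extended within the family, and its image is open but not closed in $M$, while the missed point $p$ is an interior point of $M$. So your open--closed argument (``either extend past the endpoint or the endpoint maps to a boundary point of $M$'') omits exactly the closing-up alternative, and the claim that connectedness forces $\gamma(I)=M$ is false. Relatedly, the final dichotomy ``if $\gamma$ is injective it is a simple curve'' is not justified: Definition~\ref{cjjpar} requires the arc-length parametrization to be a homeomorphism, and an injective arc-length surjection of an interval onto $M$ need not be one (the bijection $[0,2\pi)\to S^1$, $t\mapsto(\cos t,\sin t)$, is the standard example), so the circle case cannot be recognized by non-injectivity of a maximal injective parametrization. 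The repair is standard but must be made explicit: when the maximal injective $\gamma:(a,b)\to M$ is not surjective and a limit point $p$ of its image is an interior point of $M$ past which no injective extension exists, show that $a,b$ are finite, that $\gamma(t)\to p$ as $t\to a+$ and as $t\to b-$ (the two ends covering the two sides of the local arc at $p$), and that the extension to $[a,b]$ has matching one-sided derivatives at $p$, yielding a closed simple $C^1$ curve; otherwise show that the maximal $\gamma$ is a surjective local homeomorphism onto $M$, hence a homeomorphism, i.e., $M$ is a simple $C^1$ curve (on a possibly unbounded or half-open interval, with an endpoint exactly at each boundary point of $M$). With these corrections your argument closes; alternatively, you can do as the paper does and invoke the classification of connected one-dimensional manifolds directly.
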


\begin{proof}
Milnor \cite[Appendix]{Mil97} showed that a connected one-dimensional $C^\infty$ submanifold  of $\R^d$ is $C^\infty$ diffeomorphic to a circle or interval. One can easily check that his proof works for a connected one-dimensional $C^1$ submanifold $M \subset \R^d$ as well, yielding that it is $C^1$ diffeomorphic to a circle or to an interval. 
Now a standard straightforward argument gives our assertion.
\end{proof}

\begin{theorem}   \label{T_PR_1}
Let $A\subset\R^d$ be a closed set with $\dim A=1$. Then, $\reach A>0$ if and only if the following two statements hold:
\begin{enumerate}
\item[(i)] there exists an $L>0$ such that each component of $A$ is a singleton or a simple or closed simple $C^{1,1}$ curve with parameter $L$;
\item[(ii)] the inverse of the embedding $A\hookrightarrow\R^d$ is uniformly continuous (with respect to the intrinsic distance in $A$), i.e., for any $\ep>0$ there exists $\delta>0$ such that 
$$|x-y|<\delta \implies d_A(x,y)<\ep,\quad x,y\in A.$$
\end{enumerate}
\end{theorem}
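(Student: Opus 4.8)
The plan is to prove both implications separately, with the forward direction (positive reach $\Rightarrow$ (i),(ii)) being the more substantial one.

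\textbf{The direction $\reach A>0 \Rightarrow$ (i) and (ii).}
Fix $\rho$ with $0<\rho<\reach A$ (or $\rho=1$ if $\reach A=\infty$). Since $\dim A=1$, Federer's result \eqref{tanvec} gives $A=A^{(1)}$, and the set $A^{(0)}$ consists of those $a$ at which $\Nor(A,a)$ has dimension $\geq d-1$; at every other point $\Tan(A,a)$ is a $1$-dimensional vector space. First I would invoke Theorem~\ref{hlav}: $R:=A\setminus A^{(0)}$ is a uniform $C^{1,1}$ manifold of dimension $1$, open in $A$, with a uniform Lipschitz/reach constant $K=K(d,\rho)$; moreover by Proposition~\ref{zlfed} the set $A^{(0)}$ is closed and locally covered by finitely many $0$-dimensional DC surfaces, i.e.\ it is locally finite, hence discrete. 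A point $a\in A^{(0)}$ has, by \eqref{dimt}, $\dim\Tan(A,a)\leq 1$, so either $\Tan(A,a)=\{0\}$ (then by Lemma~\ref{L_cones} resp.\ Corollary~\ref{vnbt} combined with the discreteness of $A^{(0)}$, $a$ is isolated in $A$ — I would need a short argument here using that near $a$ all other points lie in $R$, giving a $C^1$ curve abutting $a$, contradiction unless $a$ is isolated, OR $a$ is an endpoint of exactly one such curve) or $\Tan(A,a)=\{tu:t\ge0\}$ is a ray (then $a$ is a boundary/endpoint). In either nontrivial case, near $a$ the set $A$ is, by Proposition~\ref{Rjecjj} applied to points of $R$ together with the attaching Lemma~\ref{prilep}, a $C^{1,1}$ submanifold with boundary of dimension $1$. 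Thus locally $A$ is a $1$-dimensional $C^1$ submanifold with boundary, so by Lemma~\ref{L_curve} each connected component is a simple or closed simple $C^1$ curve; Remark~\ref{regces} upgrades this to $C^{1,1}$ with a parameter controlled by the uniform constants, giving (i) with a single $L=L(d,\rho)$. For (ii): suppose it fails, so there are $x_n,y_n\in A$ with $|x_n-y_n|\to0$ but $d_A(x_n,y_n)\ge\ep_0$ (possibly $=\infty$). Passing to a subsequence, $x_n,y_n\to a\in A$. Near $a$, $A$ agrees (by the local description just obtained) with one $C^{1,1}$ arc through $a$ — here I use that $A^{(0)}$ is discrete so only one component passes through a small ball $B(a,r_0)$ except possibly an endpoint situation which is still a single arc — on which intrinsic and Euclidean distance are comparable by \eqref{E_cc}; once $x_n,y_n\in B(a,r_0)$ this forces $d_A(x_n,y_n)\le C|x_n-y_n|\to0$, a contradiction.

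\textbf{The direction (i) and (ii) $\Rightarrow \reach A>0$.}
Here I would argue by contradiction: if $\reach A=0$, pick $z_n\in\R^d$ with $\dist(z_n,A)\to0$ and two distinct nearest points $p_n,q_n\in\Pi_A(z_n)$; set $a_n:=(p_n+q_n)/2$. After translating/subsequencing, $p_n,q_n\to a\in A$ (using that $|p_n-q_n|\le 2\dist(z_n,A)\to0$ forces both to converge to the same limit since they are within $4\dist(z_n,A)$ of each other and of $z_n$). Now (ii) gives $d_A(p_n,q_n)\to0$, so eventually $p_n,q_n$ lie in the same component $\Gamma$ of $A$ and are joined within $\Gamma$ by an arc of tiny length; parametrize $\Gamma$ by a $C^{1,1}$ arc-length $\gamma$ with parameter $L$ (from (i)), and write $p_n=\gamma(s_n)$, $q_n=\gamma(t_n)$ with $|s_n-t_n|\le d_A(p_n,q_n)\to0$. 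Then Remark~\ref{47}, inequality \eqref{E_tan_c}, gives $\dist(q_n-p_n,\Tan(A,p_n))\le|q_n-p_n-\gamma'(s_n)(t_n-s_n)|\le 4L|q_n-p_n|^2$, and symmetrically at $q_n$. Since $p_n,q_n\in\Pi_A(z_n)$ with common distance $d_n:=\dist(z_n,A)$, the vectors $(z_n-p_n)/d_n$ and $(z_n-q_n)/d_n$ lie in $\Nor(A,p_n)$, $\Nor(A,q_n)$ respectively (Proposition~\ref{P_PR}(vi) after noting $B(p_n,d_n)\cap A=\emptyset$); but the tangent-cone estimate above together with the fact that $|p_n-q_n|$ is of the same order as $d_n$ (indeed $|p_n-q_n|\le 2d_n$, and one gets a lower bound $|p_n-q_n|\ge c\, d_n$ from $z_n$ having two distinct projections — if $|p_n-q_n|$ were $o(d_n)$ one derives a contradiction with $\gamma$ being $C^1$) yields a uniform-in-$n$ Federer-type inequality that is incompatible with $d_n\to0$. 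Alternatively, and more cleanly, I would verify condition (ii) of Proposition~\ref{fedtan} directly with $t$ a fixed positive constant: by a covering/compactness-free argument using (i) and (ii), for any $a,b\in A$ with $|a-b|$ small (smaller than the $\delta$ from (ii) for a suitable $\ep<1/(2L)$) one has $a,b$ on a common $C^{1,1}$ arc and $\dist(b-a,\Tan(A,a))\le 4L|b-a|^2$ by \eqref{E_tan_c}; for $|a-b|$ bounded below, $\dist(b-a,\Tan(A,a))\le|b-a|\le |b-a|^2/(2t)$ once $t$ is small enough. This yields $\reach A\ge t>0$.

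\textbf{Main obstacle.}
The delicate point is the forward direction's local structure claim at points of $A^{(0)}$: ensuring that a point where the tangent cone degenerates to $\{0\}$ or to a ray is genuinely an isolated point or a one-sided endpoint of a single $C^{1,1}$ arc (no ``book-spine'' junctions of several arcs, no oscillation of components accumulating there). This requires combining discreteness of $A^{(0)}$ (Proposition~\ref{zlfed}), the $C^{1,1}$-manifold-with-boundary description near such points via Proposition~\ref{Rjecjj} and Lemma~\ref{prilep}, and a topological-dimension argument ruling out three or more arc-ends meeting at a positive-reach point — the last point being exactly where one must be careful, since positive reach forbids such junctions but the proof needs the tangent-cone convexity (Proposition~\ref{P_PR}(ii)) plus \eqref{dimt}. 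A secondary subtlety is proving (ii): one must handle the possibility $d_A=\infty$ (points in distinct components) and show that Euclidean-close points eventually land in the same component, which again leans on the local $C^{1,1}$ description near their common limit.
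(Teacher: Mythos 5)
Your overall route coincides with the paper's. For the implication (i)$\wedge$(ii)$\Rightarrow\reach A>0$, your ``cleaner'' variant (verify Proposition~\ref{fedtan} directly, using \eqref{E_tan_c} on a common arc when $|a-b|$ is below the $\delta$ furnished by (ii) for $\ep<1/(2L)$, and the trivial bound $\dist(b-a,\Tan(A,a))\le|b-a|\le|b-a|^2/(2t)$ otherwise) is exactly the paper's argument; your first, contradiction-based variant is dispensable. For (i) in the forward direction you also follow the paper: Proposition~\ref{Rjecjj} at points of $R=A\setminus A^{(0)}$, Lemma~\ref{prilep} at points of $A^{(0)}\setminus\is A$ (where $\Tan(A,a)$ is a ray), then Lemma~\ref{L_curve} and Remark~\ref{regces} with the uniform Lipschitz constant $\sim 1/\rho$. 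Two remarks here: the ``main obstacle'' you flag (ruling out three or more arc-ends meeting at a singular point) is not a separate issue at all — applying Proposition~\ref{Rjecjj} to the augmented set $A\cup[x,x-\tfrac{\rho}{4}u]$ \emph{at the point $x$ itself} (which is regular for the augmented set, whose tangent there is the full line) exhibits the augmented set near $x$ as a single $C^{1,1}$ graph, so $A$ near $x$ is a single half-open arc; no topological-dimension argument and no discreteness of $A^{(0)}$ are needed. Likewise $\Tan(A,a)=\{0\}$ forces $a\in\is A$ immediately from the definition of the tangent cone; your detour through curves abutting $a$ is unnecessary.

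The genuine gap is in your forward proof of (ii). You argue by contradiction, take $x_n,y_n\in A$ with $|x_n-y_n|\to0$, $d_A(x_n,y_n)\ge\ep_0$, and ``pass to a subsequence $x_n,y_n\to a\in A$''. The set $A$ is only closed, not compact, so no convergent subsequence need exist; the sequences may escape to infinity, and then the local single-arc description around a limit point (which anyway has a radius depending on the point) is unavailable. This is not a peripheral case: for compact $A$, (ii) is essentially automatic once (i) is known, and the whole reason (ii) appears in the statement is the unbounded case — see Example~\ref{smycka}, an unbounded $C^{1,1}$ curve satisfying (i) but not (ii) and without positive reach. What is missing is a \emph{uniform} localization, which the paper obtains from Lemma~\ref{L_souv}: if $|x-y|<\delta:=\min\{\rho/2,1/(4L)\}$, then $A\cap B_{x,y}$ (the closed ball of diameter $|x-y|$ containing $x,y$) has reach exceeding its radius, hence is connected; so $x,y$ lie on one component, the connecting subarc stays in $B_{x,y}$, and the escape estimate \eqref{E_cc} forces its parameter length to be at most $1/(2L)$, giving $d_A(x,y)\le 2|x-y|$ with constants independent of the location of $x,y$. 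Without this (or an equivalent uniform argument), your proof of (ii) covers only compact $A$.
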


\begin{proof}
Assume first that (i) and (ii) hold and let $L>0$ be a parameter from (i). We shall show that there exists $\rho>0$ such that
\begin{equation}  \label{E_reach}
\dist(y-x,\Tan(A,x))\leq\frac{|y-x|^2}{2\rho},\quad x,y\in A,
\end{equation}
which is equivalent to $\reach A\geq\rho$ (see Proposition~\ref{fedtan}).

Choose a $\delta>0$ which corresponds by (ii) to $\ep:= 1/(2L)$ and set 
 $\rho:=\min\{\delta/2,(1/(8L)\}$. To prove \eqref{E_reach}, 
consider two arbitrary different points $x,y\in A$. 

If  $|y-x|\geq\delta$ then we get
$\dist(y-x,\Tan(A,x))\leq |y-x|\leq|y-x|^2/\delta$, and \eqref{E_reach} follows  since $\delta\geq 2\rho$.

If $|y-x|<\delta$ then  $d_A(x,y)<1/(2L)$       by the choice of $\delta$.
Consequently $x$ and $y$ belong to the same component $C$ of $A$. Using (i), it is easy to show
 that we  can choose an arc-length $C^{1,1}$ parametrization $\gamma: I \to \R^d$ of $C$ with parameter $L$ and points $s,t \in I$ such that $x=\gamma(s)$, $y= \gamma(t)$ and $|t-s|=d_A(x,y)$ 
(this is not quite obvious only if
 $C$ is a simple closed curve). Then  $|t-s|<1/(2L)$ and so  \eqref{E_tan_c} holds. Since
 $4L \leq 1/(2\rho)$ and  $\gamma'(s)(t-s)\in\Tan(A,x)$, we see that  \eqref{E_tan_c} implies
  \eqref{E_reach}.

Now we prove the second implication. Assume that $\reach A>0$, fix some $0<\rho<\reach A$, let $x\in R$ be a point of the regular part of $A$, $R=A \setminus A^{(0)}$, and denote $W:=\Tan(A,x)$ (this is a one-dimensional space by \eqref{tanvec}). Proposition~\ref{Rjecjj} 
yields that there exist an  open neighbourhood $U$ of $x$, $c\in W$, $r>0$ and $C^1$ mapping 
$$\varphi: B(c,r)\cap W\to W^\perp$$ 
with $(m/\rho)$-Lipschitz derivative  (where $m$ is  an absolute constant)   such that $c+\varphi(c)=x$ and, denoting $V:=B(c,r)\cap W$, 
$$\{t+\varphi(t):\, t\in V\}=A\cap U.$$
Without any loss of generality we will suppose that  $W= \spa \{e_1\}$ and  identify in the usual way      
 $W$ with $\R$ and $W^{\perp}$ with $\R^{d-1}$. 
Now it is easy to see that $A\cap U$ is a simple $C^{1}$ curve with parametrization
$$\psi: t\mapsto (t,\varphi(t)),\quad t\in V= (c-r,c+r).$$
Moreover, $\phi:=\psi^{-1}$ is clearly a local $C^1$ chart of $A$ (in the sense of Definition~\ref{Def_manif}) and $|\psi'|\leq 1$.

Consider now a point $x\in A^{(0)}\setminus\is A$. Then, $\Tan(A,x)$ is a ray determined by a unit vector $u$, and Lemma~\ref{prilep} shows that $\reach(A\cup[x,x-\frac{\rho}4 u])\geq\rho/4$. Clearly, $x$ belongs to the regular part of $A\cup[x,x-\frac{\rho}4 u]$ and 
the above consideration shows that there exists an open neighbourhood $U$ of $x$ and a $C^1$ diffeomorphism $\psi$ of an open interval $V$ onto $(A\cup[x,x-\frac{\rho}4 u])\cap U$ such that $|\psi'|\geq 1$ and $\psi'$ is $(4m/\rho)$-Lipschitz. Moreover, $J:=\psi^{-1}(A)$ is a half-open interval, and $\psi\restriction_J$ is a $C^1$ diffeomorphism of $J$ onto $A\cap U$, hence, its inverse is a local $C^1$ chart of $A$. 

Summarizing both cases, to any point $x\in A\setminus\is A$ there exists a neighbourhood $U_x$, open or half-open interval $J_x$ and $C^1$ diffeomorphism $\psi_x:J_x\to A\cap U_x$ such that $\psi_x'$ is $(4m/\rho)$-Lipschitz and $|\psi_x'|\geq 1$. In
 particular, $\psi_x^{-1}$ is a local $C^1$ chart of $A$, hence, $A\setminus\is A$ is a one-dimensional $C^1$ submanifold with boundary. By Lemma~\ref{L_curve}, any connected component $C$ of $A$ which has more than one point must be a $C^1$ simple or closed simple curve. Let
 $\gamma: I \to \R^d$ be a corresponding $C^1$ arc-length parametrization of $C$ (injective on $\INt I$).
 If $x\in C$, choose $U_x$, $J_x$ and $\psi_x$ as above. Since $\psi_x$ parametrizes some relative
 neighbourhhood of $x$ in $C$, using Remark \ref{regces}  we obtain that $\gamma$
  has locally $L$-Lipschitz derivative with $L:={8}m/\rho$.
 This clearly implies that $\gamma'$ is globally $L$-Lipschitz and so condition (i) holds.

It remains to verify condition (ii). Set $\delta:=\min\{\rho/2,1/(4L)\}$. Take two different points $x,y\in A$ at distance $|y-x|<\delta$. Using Lemma~\ref{L_souv}, we get that $A\cap B_{x,y}$ is connected, where $B_{x,y}$ is the closed ball of diameter $|y-x|$ containing $x$ and $y$. Thus, $x$ and $y$ lie in one connected component $C$ of $A$, which is (by already proved condition (i))
 a simple or closed simple $C^{1,1}$ curve with parameter $L$. 
Since $A\cap B_{x,y}$ is connected, we have that $A\cap B_{x,y}= C \cap B_{x,y} $ 
  and we easily see that there exists a $\gamma:[s,t]\to C \cap B_{x,y}$ which is a simple  $C^{1,1}$  arc-length parametrization
	 with parameter $L$ (of a simple subarc of $C$) such that $\gamma(s)=x$ and $\gamma(t)=y$.
If $s_1:=s+1/(2L)\in [s,t]$ then, using \eqref{E_cc}, we get
$$|\gamma(s_1)-x|\geq\frac 12|s_1-s|=\frac 1{4L}>|y-x|,$$ 
hence, $\gamma(s_1)\not\in B_{x,y}$. Consequently, $|t-s|\leq 1/(2L)$ and, using \eqref{E_cc} again, we get
$d_A(x,y)\leq |t-s|\leq 2|y-x|$. This clearly proves (ii) and the proof is complete.
\end{proof}

We say that a simple $C^{1,1}$ curve $A\subset\R^d$ has the {\it quasi-arc property}, provided that
\begin{enumerate}
\item[(Q)]
For each $\ep>0$ there exists $\delta>0$ such that $|x_1-x_2|< \ep$ whenever 
 $\{x_1,x_2,x_3\} \subset A$, $|x_1-x_3|< \delta$ and $x_1$ and $x_3$ belong to different components of $A \setminus \{x_2\}$.
\end{enumerate}

\begin{corollary}\label{osse}
Let $A\subset\R^d$ be a closed connected set with $\dim A=1$. Then, $\reach A>0$ if and only if it is a $C^{1,1}$ curve of one of the following four types:
\begin{enumerate}
\item[(a)] $A$ is a simple $C^{1,1}$ arc,
\item[(b)] $A$ is a closed simple $C^{1,1}$ curve,
\item[(c)] $A$ is a simple $C^{1,1}$ curve homeomorphic to $[0,\infty)$ and with the quasi-arc property,
\item[(d)] $A$ is a simple $C^{1,1}$ curve homeomorphic to $\R$ and with the quasi-arc property.
\end{enumerate}
\end{corollary}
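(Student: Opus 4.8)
The plan is to deduce everything from Theorem~\ref{T_PR_1}: its conditions (i) and (ii) already encode the local $C^{1,1}$ regularity and the absence of ``short-cuts'' in the embedding, so the work reduces to recognizing which curve type $A$ is, and to identifying condition (ii) with the quasi-arc property (Q) in the two non-compact cases. Throughout I use that $A$, being connected with $\dim A=1$, is not a singleton.

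\emph{Only if.} Suppose $\reach A>0$; then (i) and (ii) of Theorem~\ref{T_PR_1} hold. Condition (i) says that $A$ (its unique component) is either a closed simple $C^{1,1}$ curve, which is case~(b), or a simple $C^{1,1}$ curve. In the latter case I fix an arc-length homeomorphism $\gamma:I\to A$ and determine $I$ using that $A$ is closed: as $\gamma$ is $1$-Lipschitz it has a limit at any finite endpoint of $I$ not already in $I$, this limit lies in $\overline A=A=\gamma(I)$, hence equals $\gamma(c)$ for some $c\in I$, and then continuity of $\gamma^{-1}$ forces $s_n\to c$ for a sequence $s_n$ tending to that endpoint, a contradiction. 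Thus $I$ is compact (so $A$ is a simple $C^{1,1}$ arc, case~(a)), or a half-line (so $A$ is homeomorphic to $[0,\infty)$), or $\R$ (so $A$ is homeomorphic to $\R$). It remains to verify (Q) in the last two cases. Given $x_1,x_2,x_3\in A$ with $x_1,x_3$ in different components of $A\setminus\{x_2\}$, every continuous path in $A$ from $x_1$ to $x_3$ must contain $x_2$ (otherwise its image would be a connected subset of $A\setminus\{x_2\}$ meeting both components); splitting such a path at $x_2$ and passing to the infimum gives $d_A(x_1,x_2)\le d_A(x_1,x_3)$ (all intrinsic distances being finite since $A$ is path-connected). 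Hence, for $\ep>0$, taking $\delta$ from condition (ii), $|x_1-x_3|<\delta$ forces $d_A(x_1,x_3)<\ep$, whence $|x_1-x_2|\le d_A(x_1,x_2)\le d_A(x_1,x_3)<\ep$; this is exactly (Q).

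\emph{If.} Suppose $A$ is of one of the types (a)--(d); I check conditions (i) and (ii) of Theorem~\ref{T_PR_1}. Condition (i) is immediate, since $A$ is its only component and is, by hypothesis, a simple or closed simple $C^{1,1}$ curve, hence one with some parameter $L$. For (ii) in the compact cases (a) and (b): the inverse of a homeomorphic arc-length parametrization (of a compact interval, resp.\ of a circle of length $\length A$) is continuous on the compact set $A$, hence uniformly continuous, and $d_A(x,y)$ is dominated by the corresponding parameter distance, so $|x-y|$ small forces $d_A(x,y)$ small. For (ii) in cases (c) and (d), let $\gamma$ be an arc-length homeomorphism with parameter $L$; if (ii) failed there would be $s_n<t_n$ with $|\gamma(s_n)-\gamma(t_n)|\to0$ and $t_n-s_n\ge d_A(\gamma(s_n),\gamma(t_n))\ge\ep_0$, and replacing $\ep_0$ by $\min\{\ep_0,1/(2L)\}$ (the same sequences still witness failure) we may assume $\ep_0<1/L$. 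Put $x_2:=\gamma(s_n+\ep_0/2)$; since $s_n<s_n+\ep_0/2<t_n$, $x_2$ separates $\gamma(s_n)$ and $\gamma(t_n)$ in $A$, and by \eqref{E_cc} of Remark~\ref{47} we get $|\gamma(s_n)-x_2|\ge\frac12\cdot\frac{\ep_0}{2}=\frac{\ep_0}{4}$. Applying (Q) with $\ep:=\ep_0/4$ and its corresponding $\delta$, then choosing $n$ large enough that $|\gamma(s_n)-\gamma(t_n)|<\delta$, we obtain $|\gamma(s_n)-x_2|<\ep_0/4$, a contradiction. Hence (ii) holds and $\reach A>0$.

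The one step with genuine content is the last one: extracting from the quasi-arc property a separating point lying at a controlled parameter distance, so that the quantitative estimate \eqref{E_cc} can be played off against (Q). The classification of the parametrizing interval and the path-splitting argument are routine bookkeeping.
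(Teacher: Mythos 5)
Your proof is correct and follows essentially the same route as the paper's: both directions reduce to Theorem~\ref{T_PR_1}, the parametrizing interval is classified via closedness of $A$ exactly as in the paper, and in the converse direction the estimate \eqref{E_cc} is played off against (Q) at a separating point of controlled parameter distance. The only divergence is your verification of (Q) in the forward direction, which you deduce from condition (ii) of Theorem~\ref{T_PR_1} via $|x_1-x_2|\le d_A(x_1,x_2)\le d_A(x_1,x_3)$ (every path from $x_1$ to $x_3$ in $A$ must pass through the separating point $x_2$), whereas the paper re-invokes positive reach through Lemma~\ref{L_souv} (connectedness of $A\cap\overline B(x_1,\delta)$ for $\delta\le\tfrac12\reach A$); your variant is equally valid and has the minor advantage of making that direction a formal consequence of the statement of Theorem~\ref{T_PR_1} alone.
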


\begin{proof}
If $A$ has positive reach then, by Theorem~\ref{T_PR_1}, it is a simple or closed simple $C^{1,1}$ curve. If (a) or (b) holds, we are done. In the opposite case we can choose a
 $C^{1,1}$ arc-length parametrization $\gamma:I\to A$, which is a homeomorphism between $I$ and $A$
 and $I$ is not compact (since (a) does not hold).  
Now observe that
 $I$ is a closed set. Indeed, assume for the contrary that there exists a point $t\in\overline{I}\setminus I$, and let $t_i\in I$ be such that $t_i\to t$, $i\to \infty$. Since $\gamma$ is $1$-Lipschitz, there exists $\lim \gamma(t_i)=:x \in \R^d$, and we get $x\in A$ from the closedness of $A$. Since $\gamma^{-1}$ is continuous, we get $t_i\to\gamma^{-1}(x)\in I$, which is a contradiction. 
So, since $I$ is not compact, $I$ is either isometric with $[0,\infty)$, or equal to $(-\infty,\infty)$. We shall verify property (Q). Let $\ep>0$ be given, set $\delta:=\min\{\frac\ep 2,\frac 12\reach A\}$ and let $x_1,x_2,x_3\in A$ be such that $|x_1-x_3|<\delta$ and $x_1,x_3$ belong to different components of $A\setminus\{x_2\}$. Using Lemma~\ref{L_souv}, we get that $A\cap\overline B(x_1,\delta)$ is connected. Hence, $|x_2-x_1|\leq\delta<\ep$, which proves (Q).

To prove the other implication, suppose that $A$ is of a type (a)-(d). Then (i) from
 Theorem~\ref{T_PR_1} trivially 
 holds and so, by Theorem~\ref{T_PR_1}, it is sufficient to verify property (ii) from this theorem. 
If $A$ is a curve of type (a) or (b), it must satisfy property (ii) (indeed, it is easy to show that in these cases, the inverse to the embedding $A\hookrightarrow\R^d$ is continuous, and the uniform continuity follows from the compactness of $A$). 

So suppose that $A$ is of type (c) or (d), we have to verify condition (ii).
To this end, let $\gamma: I \to A$ be an arc-length $C^{1,1}$
 parametrization of $A$ with parameter $L$ which is a homeomorphism between $I$ and $A$.
 Note that clearly   $|t_1-t_2|= d_A(\gamma(t_1), \gamma(t_2))$ whenever $t_1, t_2 \in I$.

 Given $\ep_0:= 1/(8L)$, find $\delta_0>0$ by condition (Q).
Now, for arbitrary $\ep>0$, put  $\delta:=\min\{\ep/2, \delta_0\}$. It is sufficient to prove
 that $|t_1-t_2|< \ep$ whenever  $0\leq t_1<t_2$ and $|\gamma(t_1)-\gamma(t_2)| < \delta$.
 Suppose, to the contrary, that $0\leq t_1<t_2$, $|\gamma(t_1)-\gamma(t_2)| < \delta$ and
$|t_1-t_2|\geq \ep$. Then we have  $|t_1-t_2| \geq 1/(2L)$, since otherwise by \eqref{E_cc}
 $|\gamma(t_1)-\gamma(t_2)|  \geq |t_1-t_2|/2 \geq \ep/2  \geq \delta$.
 So, setting  $x_2:= \gamma(t_1 + 1/(4L))$, we clearly have that $x_1:=\gamma(t_1)$ and $x_3:=\gamma(t_2)$ belong to
 different components of $A\setminus \{x_2\}$ and so $|x_1-x_2| < \ep_0= 1/(8L)$ by the choice
 of $\delta_0$. On the other hand, by \eqref{E_cc} we obtain $|x_1-x_2| \geq (1/2) (1/(4L))=
 1/(8L)$, a contradiction.
\end{proof}

\begin{remark}
We have shown that in case (c), $A$ admits a homeomorphic arc-length parametrization $\gamma:[0,\infty)\to A$. Moreover, we have $\lim_{t\to\infty}|\gamma(t)|=\infty$. Indeed, otherwise there exists a sequence $t_i \to \infty$ such that $\gamma(t_i) \to x \in \R^d$. Since $A$ is closed, we have $x \in A$.
 Using the continuity of $\gamma^{-1}$, we get $t_i\to\gamma^{-1}(x)\in I$, which is a contradiction.

Similarly, $\lim_{t\to\pm\infty}|\gamma(t)|=\infty$ in case (d).
\end{remark}

\begin{example}\label{smycka}
Set $\psi(t):= (t^2, t e^{-t^2}),\ t \in \R$. Using Remark \ref{regces},
 it is easy to check that the image of $\psi$ is a simple $C^{1,1}$
 curve (and it is also a one-dimensional $C^{1,1}$ manifold). However, it has neither the quasi-arc property, nor positive reach.         
\end{example}

\end{document}